\documentclass[12pt]{article}
\usepackage[utf8]{inputenc}

\usepackage[margin=2.2cm]{geometry}

\usepackage{amsmath}
\usepackage{amsfonts}
\usepackage{amssymb}
\usepackage{amsthm}
\usepackage{bbm}
\usepackage{enumitem}

\usepackage{pgfplots}
\pgfplotsset{ every non boxed x axis/.append style={x axis line style=-},
     every non boxed y axis/.append style={y axis line style=-}}
\usepackage{calc}
\usepackage{wrapfig}
\usepackage{mleftright}
     
\usepackage[hyphens]{url}
\usepackage{authblk}

\newtheorem{theo}{Theorem}
\newtheorem{lemma}[theo]{Lemma}

\newtheorem{prop}[theo]{Proposition}

\theoremstyle{definition}\newtheorem{defn}[theo]{Definition}
\theoremstyle{remark}\newtheorem{remark}[theo]{Remark}
\theoremstyle{remark}

\numberwithin{theo}{section}
\numberwithin{equation}{section}

\newcommand{\R}{\mathbb{R}}
\newcommand{\N}{\mathbb{N}}

\newcommand{\MCL}{\mathcal{L}}

\newcommand{\MCE}{\mathcal{E}}
\newcommand{\MCC}{\mathcal{C}}

\newcommand{\MCH}{\mathcal{H}}

\newcommand{\MCB}{\mathcal{B}}
\newcommand{\MCM}{\mathcal{M}}
\newcommand{\MCP}{\mathcal{P}}
\newcommand{\MCS}{\mathcal{S}}
\newcommand{\MCR}{\mathcal{R}}
\newcommand{\MCA}{\mathcal{A}}
\newcommand{\MCT}{\mathcal{T}}

\newcommand{\ugamma}{\underline{\gamma}}
\newcommand{\var}{\text{var}}
\newcommand{\Leb}{\text{Leb}}
\newcommand{\id}{\text{id}}
\newcommand{\floor}[1]{\left\lfloor #1 \right\rfloor}

\newcommand{\diam}{\text{diam}}
\newcommand{\diff}{\mathop{}\!\mathrm{d}}
\newcommand{\Conv}{\mathop{}\!\text{Conv}}

\DeclareMathOperator{\interior}{int}

\title{Multifractal analysis for Markov interval maps with countably many branches}
\author{Tom Rush}
\affil{School of Mathematics, University of Bristol, Bristol, BS8 1UG, U.K.}
\affil{Email: thomas.rush@bristol.ac.uk}
\begin{document}
\date{}
\maketitle
\begin{abstract}
We study multifractal decompositions based on Birkhoff averages for sequences of functions belonging to certain classes of symbolically continuous functions. We do this for an expanding interval map with countably many branches, which we assume can be coded by a topologically mixing countable Markov shift. This generalises previous work on expanding maps with finitely many branches, and expanding maps with countably many branches where the coding is assumed to be the full shift. When the infimum of the derivative on each branch approaches infinity in the limit, we can directly generalise the results of the full countable shift case. However when this does not hold, we show that there can be different behaviour, in particular in cases where the coding has finite topological entropy.
\end{abstract}

\section{Introduction}
Given a dynamical system \((X,T)\) on a metric space \(X\) and a measurable function \(f:X \rightarrow \R\), for \(\alpha \in \R\) it is natural to consider the size (Hausdorff dimension) of the sets
\[\MCL(\alpha):=\left\{x \in X: \lim_{n \rightarrow \infty} \frac{1}{n} \sum_{i=0}^{n-1} f(T^i(x))=\alpha  \right\}.\]
When \(X\) is compact and both \(T\) and \(f\) are continuous, \(\MCL(\alpha)\) is non-empty if and only if there exists a \(T\)-invariant measure \(\mu\) such that \(\int f \diff \mu=\alpha \). Moreover, if \((X,T)\) is expanding, the space of invariant measures is typically very rich and it is often possible to express the dimension of \(\MCL(\alpha)\) as a conditional variational principle in terms of the entropies and Lypunov exponents of such measures \(\mu\). Notice that one can similarly define corresponding sets \(\MCL(\underline{\alpha})\) for finitely or countably many functions \(f_i:X \rightarrow \R\). For compact expanding dynamical systems, there is usually little difficultly in extending this result from one function to finitely or countably many functions. 

Related results have been attained in cases when \(X\) is not compact. However, without compactness, the situation is more complicated and interesting new behaviour has been found. For example, it is possible for \(\MCL(\alpha)\) to be non-empty and have positive dimension but not support any invariant measures. In this paper, we consider a dynamical system on the unit interval with a countable number of expanding \(C^1\) branches which we assume can be coded by a topologically mixing countable Markov shift (CMS). This generalises work by A. Fan, T. Jordan, L. Liao and M. Rams in their paper Multifractal Analysis for Expanding Interval Maps with Infinitely Many Branches \cite{FJLR15} where they considered the problem in the specific case where the CMS is the full shift. While much of the theory is analogous and can be seen as a direct generalisation, we also find new behaviour, in particular when the CMS has finite topological entropy. 

Interest in problems of this type goes back as far as 1934 when A. S. Besicovitch considered the Hausdorff dimension of the set of points in the unit interval whose base 2 expansion have digits with given frequencies \cite{Bes34} (see also, \cite{Kni34}). This is equivalent to finding the Hausdorff dimension of the sets \(\MCL(\alpha)\) in the case where \(X\) is the unit interval, \(T\) is doubling map and \(f\) is the characteristic function on \([0,1/2)\). In 1948, Eggleston generalised this to the base \(N\) case \cite{Egg49}, and this was further extended in papers including \cite{BSa01}, \cite{Caj81}, \cite{Dur97}, \cite{Oli98}, \cite{Oli00}, \cite{Ols02}, \cite{Ols03b}, \cite{OW03}, \cite{PS07} and \cite{Vol58}. 

In the paper Recurrence, Dimension and Entropy \cite{FFW01}, A. Fan, D. Feng and J. Wu considered the problem with a finite number of continuous functions \(f_i\) on a topologically mixing sub-shift of finite type. Related problems were also studied in the papers \cite{BSc00}, \cite{BSS02a}, \cite{BSS02b}, \cite{FF00}, \cite{FLW02}, \cite{Oli99}, \cite{Ols03a} \cite{OW07}, \cite{PW01} and \cite{Tem01}. The most fundamental application in this setting is to consider the size of the set of points with digits of given frequency. In our setting, we can consider the analogous problem relating to the set of points whose orbits occupy each branch with given frequency, that is, the set of points whose codings have digits with given frequency. As our system is coded by a countable Markov shift, there will be points whose frequency of digits sum below one. In \cite{FJLR15}, in the specific case where the coding is the full countable shift, they showed that there is some value \(s_{\infty}\), depending on the map \(T\), such that when the probabilities sum below one the dimension of the corresponding frequency sets is \(s_{\infty}\). This behaviour was first found in \cite{FLM10} for the Gauss map \(G:(0,1] \rightarrow (0,1]\) defined by \(G(x)=1/x \mod 1\). We show in Theorem \ref{theo:uiexact} that this holds in the more general setting of when the infimum of the derivative on each branch approaches infinity in the limit. However, in Theorem \ref{theo:uiexactbounded}, we find that in some instances where the derivative has a uniform bound, the dimension of these sets may vary. In this case, the CMS has finite topological entropy and the quantity \(\delta_{\infty}\), \textit{the entropy at infinity}, plays a significant role. 

A convenience in the full shift case is that one is able to approximate invariant measures using Bernoulli measures. For sub-shifts of finite type, there is a bound for the number of steps it takes to get from one digit to another on the shift space, so the argument using Bernoulli measures can be adapted. This is no longer the case for general countable Markov shifts, which causes additional complications in the analysis. We are able to work around this problem using recent work by G. Iommi, M. Todd and A. Velozo on the space of invariant measures for countable Markov shifts and the properties in the limit of sequences of these measures \cite{IV19}, \cite{ITV19}.

\subsection*{Acknowledgements}
I would like to thank my PhD supervisor, Thomas Jordan, for valuable discussions, his careful reading of multiple drafts of this paper, and his subsequent advice. I would also like to thank Mike Todd, Godofredo Iommi, the referee, and the associate editor for their helpful comments and suggestions. This work was supported by an EPSRC DTP at the University of Bristol, studentship 2278542.

\section{Setting and Results}
Let \(\{I_i\}_{i \in \N}\) be a countable collection of disjoint subintervals of \([0,1]\) which satisfy \(\cup_{i \in \N} I_i=\cup_{i \in \N} \overline{I_i}\). Let \(T_i:\overline{I_i} \rightarrow [0,1]\) be an injective \(C^1\) map such that \(|T'_i(x)| \geq \zeta>1\). By this we mean that \(T_i\) can be extended to a \(C^1\) diffeomorphism from an open neighbourhood of \(\overline{I_i}\) to an open neighbourhood of \(\overline{T_i(I_i)}\) which maps \(\overline{I_i}\) to \(\overline{T_i(I_i)}\). We define the map \(T: \cup_{i \in \N} \overline{I_i} \rightarrow [0,1]\) by \(T(x)=T_i(x)\) for all \(x \in I_i\) and adopt the convention that \(T'(x)=T'_i(x)\) for all \(x \in I_i\). We also assume that \(\log|T'|\) has variations uniformly tending to 0 (see Definition \ref{def:variationstendinguniformly}).

We assume that \(\interior {T_i(I_i)} \cap \interior{I_j}\) is equal to \(\interior{I_j}\) or the empty set for all \(i, j \in \N\), where \(\interior\) denotes the interior. Let \((\Sigma,\sigma)\) be the countable Markov shift with transition matrix \(A_{ij}=1\) if and only if \(\interior{T_i(I_i)} \cap \interior{I_j}=\interior{I_j}\). Throughout this paper we assume that this coding \((\Sigma, \sigma)\) is topologically mixing (see Section \ref{subsect:CMSandbi}). Consider the natural projection \(\Pi: \Sigma \rightarrow [0,1]\) defined by
\[\Pi(\underline{i})=\lim_{n \rightarrow \infty} T^{-1}_{i_1} \circ \ldots. \circ T_{i_n}^{-1}([0,1]).\]
Let 
\[\Lambda=\Pi(\Sigma).\]
Then \((\Lambda,T)\) defines a dynamical system.

We denote
\[E:=\{ x \in \Lambda: \# \Pi^{-1}(x)\geq 2\}\]
to be the set of points without a unique coding and note that \(\cup_{n=0}^{\infty} T^{-n}E\) is at most countable, so for any set \(\Omega \subset \Lambda\) we have that \(\dim \Omega=\dim \Omega \setminus \cup_{n=0}^{\infty} T^{-n}E\). We assume that \(\Pi(\omega_1,\omega_2, \ldots) \in I_{\omega_1}\) for every \(\omega \in \Sigma \setminus \Pi^{-1}(E)\), so that \(T(\Pi(\omega))=T_{\omega_1}(\Pi(\omega))=\Pi(\sigma \omega))\) for these \(\omega\). This can be achieved by modifying the endpoints of the \(\{I_i\}_{i \in \N}\) where necessary. We will also assume that there are no periodic points contained in \(E\). 

Let \(\MCM(\Lambda,T)\) be the set of \(T\)-invariant probability measures on \(\Lambda\). Since \(E\) is at most countable and does not contain any periodic points, it does not support any invariant measures. It follows that \(\Pi\) gives a bijection between the set of \(T\)-invariant measures and the set of shift invariant measures \(\MCM(\Sigma,\sigma)\). For \(\mu \in \MCM(\Lambda,T)\), let \(\lambda_{\mu}:=\int \log|T'| \diff \mu \) be the Lyapunov exponent of \(\mu\) and let \(h_{\mu}\) the entropy of \(\mu\) with respect to \(T\) (see Section \ref{subsec:entropy}). We also define \(\MCM_{\MCE}(\Lambda,T)\) and \(\MCM_{\MCE}(\Sigma,\sigma) \) to be the subsets of \(\MCM(\Lambda,T)\) and \(\MCM(\Sigma,T)\), respectively, consisting of the ergodic measures. It is easy to see that \(\Pi\) also gives a bijection between \(\MCM_{\MCE}(\Lambda,T)\) and \(\MCM_{\MCE}(\Sigma,\sigma)\). 

For a sequence of functions \(\phi_i:\Lambda \rightarrow \R\) with variations uniformly tending to 0 (Definition \ref{def:variationstendinguniformly}), we will study the possible limit points in \(\R^{\N}\) of the Birkhoff average sequences \( (A_n \phi_i(x))_{n \in \N} \), where
\[A_n \phi(x):= \frac{1}{n} \sum_{i=0}^{n-1} \phi(T^i(x)). \]
In particular, we investigate sets of the form 
    \[\Lambda(\underline{\gamma}):=\{ x \in \Lambda \setminus \cup_{j=0}^{\infty} T^{-j}E:\lim_{n \rightarrow \infty} A_n \phi_i(x)=\gamma_i \text{ for all } i \in \N\}, \: \underline{\gamma} \in \R^{\N} .\]
The following sets will be used to describe the possible limits of the Birkhoff averages. Let
\[Z_0:=  \left\{ \underline{\gamma} \in \R^{\N}: \exists \mu \in \MCM(\Lambda,T), \int \phi_i \diff \mu=\gamma_i, \forall i \in \N \right\}\]
and let \(Z\) be the closure of \(Z_0\) in the pointwise limit topology, that is
\[Z:= \left\{ \underline{\gamma} \in \R^{\N}: \forall \varepsilon>0, \forall k \in \N, \exists \mu \in \MCM(\Lambda, T), \forall i \leq k, \left|\int \phi_i \diff \mu-\gamma_i \right|<\varepsilon \right\}. \]

The following set will be of importance in this paper. Let \[\MCR:= \bigcup_{q=1} \{\omega \in \Sigma: \omega_i=q \text{ for infinitely many } i \in \N \}\]
be the \textit{recurrent set}. We will also use \(\MCT\) to denote the \textit{transient set} \(\Sigma \setminus \MCR\).  For a set \(\Omega \subset \Sigma\) we denote the set \(\Pi(\Omega)\setminus \cup_{n=0}^{\infty} T^{-n}E\) by \(\Lambda_\Omega\) and \(\Pi(\Omega) \cap \Lambda(\underline{\gamma})\) by \(\Lambda_\Omega(\underline{\gamma})\). Unfortunately, unlike in the case where the coding is the full countable shift, there may exists \(\ugamma \not\in Z\) which is the Birkhoff limit of some \(x \in \Lambda_\MCT\). For this reason, we must restrict our attention to the recurrent set \(\MCR\). We calculate the Hausdorff dimension of the sets \(\Lambda_{\MCR}(\underline{\gamma})\). For \(\ugamma \in Z\), let
\[\alpha_1(\ugamma):=\lim_{\varepsilon \rightarrow 0} \lim_{k \rightarrow \infty} \sup_{\mu \in \MCM(\Lambda,T)} \left\{\frac{h_{\mu}}{\lambda_{\mu}}:\left|\int \phi_i \diff \mu-\gamma_i \right|<\varepsilon, \forall i \leq k, \: \lambda_{\mu}<\infty \right\} \]
\[\alpha_2(\ugamma):=\lim_{\varepsilon \rightarrow 0} \lim_{k \rightarrow \infty} \sup_{\mu \in \MCM_{\MCE}(\Lambda,T)} \left\{\frac{h_{\mu}}{\lambda_{\mu}}:\left|\int \phi_i \diff \mu-\gamma_i \right|<\varepsilon, \forall i \leq k, \: \lambda_{\mu}<\infty \right\}. \]

\begin{theo}\label{theo:uiapprox}
Let \((\phi_i)_{i \in \N}\) be a sequence of functions with variations uniformly tending to 0. For \(\gamma \not\in Z\), we have \(\Lambda_{\MCR}(\underline{\gamma})=\emptyset\). For \(\gamma \in Z\), we have
\[\dim \Lambda_{\MCR}(\underline{\gamma})=\alpha_1(\ugamma)=\alpha_2(\ugamma).\]
\end{theo}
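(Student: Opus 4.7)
The overall plan is to verify the emptiness statement first, then prove the two inequalities $\dim \Lambda_{\MCR}(\ugamma) \le \alpha_1(\ugamma)$ and $\dim \Lambda_{\MCR}(\ugamma) \ge \alpha_2(\ugamma)$; combined with the obvious $\alpha_2(\ugamma) \le \alpha_1(\ugamma)$, these will force equality. For the emptiness claim, suppose $x \in \Lambda_{\MCR}(\ugamma)$ has coding $\omega \in \MCR$, and form the empirical measures $\mu_n := \frac{1}{n}\sum_{i=0}^{n-1} \delta_{\sigma^i \omega}$ on $\Sigma$. Since $\omega$ visits some symbol $q$ infinitely often, $\mu_n([q])$ stays bounded below along a subsequence, providing enough tightness (via the Iommi--Todd--Velozo results cited in the introduction) to extract a weak-$\ast$ limit $\mu$ that is a non-zero $\sigma$-invariant positive measure. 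Normalising and exploiting the hypothesis that each $\phi_i$ has variations uniformly tending to zero, one passes the Birkhoff averages to the limit to obtain $\int \phi_i\, d\mu = \gamma_i$ for every $i$, contradicting $\ugamma \notin Z$.

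For the upper bound, fix $s > \alpha_1(\ugamma)$ and cover $\Lambda_{\MCR}(\ugamma)$ by $n$-cylinders $[x_1\cdots x_n]$, whose diameters satisfy $|[x_1\cdots x_n]| \asymp \exp(-S_n \log|T'|(x))$ by the bounded-distortion consequence of the variations tending uniformly to zero. For any $x \in \Lambda_{\MCR}(\ugamma)$, the empirical-measure extraction above produces a subsequential $\mu \in \MCM(\Lambda,T)$ with $\int \phi_i\, d\mu = \gamma_i$, so by definition of $\alpha_1$ we have $h_\mu/\lambda_\mu \le \alpha_1(\ugamma) < s$. A Katok-style local entropy estimate then bounds the number of $n$-cylinders needed to cover the sub-level set on which the local Lyapunov exponent and the averages of $\phi_1,\dots,\phi_k$ are $\varepsilon$-close to their targets; this grows like $\exp(n(s\lambda + o(1)))$ and yields finite $s$-dimensional Hausdorff measure.

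For the lower bound, fix $\alpha < \alpha_2(\ugamma)$ and choose a sequence of ergodic invariant measures $\mu_j$ with $|\int \phi_i\, d\mu_j - \gamma_i| < \varepsilon_j$ for $i \le k_j$, with $\lambda_{\mu_j} < \infty$, and $h_{\mu_j}/\lambda_{\mu_j} > \alpha$. Using the measure-approximation results of Iommi--Todd--Velozo, each $\mu_j$ can be replaced, at the cost of small adjustments to $h_{\mu_j}$, $\lambda_{\mu_j}$ and the $\int\phi_i\, d\mu_j$, by a Gibbs measure supported on a topologically mixing sub-shift of finite type $\Sigma^{(j)} \subset \Sigma$ with the same qualitative approximation of $\ugamma$ and dimension quotient. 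One then assembles a Moran-type Cantor subset of $\Lambda_{\MCR}(\ugamma)$ by concatenating increasingly long $\Sigma^{(j)}$-typical blocks, glued by bounded-length admissible connector words supplied by topological mixing. A judicious choice of block lengths drives the Birkhoff average of each $\phi_i$ to $\gamma_i$ and places every resulting sequence in $\MCR$, while the mass-distribution principle applied to the natural measure on the Moran set, combined with the cylinder-diameter formula above, delivers Hausdorff dimension at least $\alpha$. The principal obstacle is precisely this gluing step: unlike in the full-shift setting of \cite{FJLR15}, where Bernoulli measures suffice, one must simultaneously control the count and length of the connector words in the general CMS so that they neither perturb the prescribed Birkhoff averages nor contribute enough expansion to depress the attained dimension, which is why the recurrence restriction to $\MCR$ and the uniform variation hypothesis on the $\phi_i$ enter crucially.
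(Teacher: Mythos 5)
Your overall skeleton does not prove the theorem even if every step worked. You propose \(\dim \Lambda_{\MCR}(\ugamma)\leq\alpha_1(\ugamma)\) and \(\dim \Lambda_{\MCR}(\ugamma)\geq\alpha_2(\ugamma)\), and then invoke the trivial inequality \(\alpha_2(\ugamma)\leq\alpha_1(\ugamma)\); but these three statements are consistent with \(\alpha_2(\ugamma)<\dim \Lambda_{\MCR}(\ugamma)<\alpha_1(\ugamma)\), so they do not force \(\alpha_1(\ugamma)=\alpha_2(\ugamma)\). The paper runs the two bounds the other way round: the covering argument produces \emph{ergodic} measures, so the upper bound is \(\dim\leq\alpha_2(\ugamma)\) (Proposition \ref{prop:approxupperbound}), while the Moran/\(w\)-measure construction, after the entropy-density approximation, works for arbitrary invariant measures and gives \(\dim\geq\alpha_1(\ugamma)\) (Proposition \ref{prop:approxlowerbound}); then \(\alpha_1\leq\dim\leq\alpha_2\leq\alpha_1\) closes the loop.

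The empirical-measure extraction you rely on for both the emptiness statement and the upper bound is also genuinely broken. Visiting a symbol \(q\) infinitely often does not give a positive lower bound for \(\mu_n([q])\) along any subsequence (the visit times can have zero density), so tightness can fail and the limit may be the zero measure; moreover the \(\phi_i\) are only assumed to have variations uniformly tending to \(0\), hence may be unbounded, so even weak* convergence without mass loss does not let you pass \(A_n\phi_i(x)\rightarrow\gamma_i\) to \(\int\phi_i\diff\mu=\gamma_i\), and after normalising a sub-probability limit the integrals are in any case no longer \(\gamma_i\). Worse, in the upper bound you assert that every \(x\in\Lambda_{\MCR}(\ugamma)\) produces \(\mu\in\MCM(\Lambda,T)\) with \(\int\phi_i\diff\mu=\gamma_i\) exactly; this is false precisely when \(\ugamma\in Z\setminus Z_0\), a case the theorem must cover (and where, under the hypotheses of Theorem \ref{theo:uiexact}, the dimension is \(s_\infty\) although no such measure exists). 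The paper avoids all of this: emptiness is proved by taking periodic-orbit measures built from long return blocks of the coding, which only need to \emph{approximate} \(\gamma_i\) — exactly what the closure definition of \(Z\) requires — and the upper bound goes in the opposite direction, constructing \(\sigma^n\)-invariant Bernoulli measures on the covering words of an allegedly too-large cover and averaging them into ergodic measures with integrals \(\varepsilon\)-close to \(\ugamma\) and \(h_\mu/\lambda_\mu\) close to the dimension, with no pointwise generic-point analysis at all. Your lower-bound sketch is close in spirit to the paper's construction, but the step where you replace \(\mu_j\) by ergodic measures on finitely many symbols while simultaneously controlling \(\lambda_{\mu_j}\) (not just \(h_{\mu_j}\) and the integrals) is exactly the content of the paper's Proposition \ref{prop:entropydensity}, proved via the suspension-space/Abramov argument; the Iommi--Todd--Velozo entropy-density theorem alone does not control the possibly unbounded \(\log|T'|\), so this cannot be waved through as a ``small adjustment''.
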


We would like to have the dimension without the limits in \(k\) and \(\varepsilon\). This is possible if we restrict the behaviour of \(|T'|\) on \(I_i\) in the limit as \(i \rightarrow \infty\), and set stricter conditions on the \(\phi_i\). For Theorem \ref{theo:uiexact} we assume (in addition to our previous assumptions) that
\begin{equation}\label{eqn:uiexactcondition}
    \inf\{|T'(x)|:x \in \overline{I_i}\setminus E\} \rightarrow \infty
\end{equation}
as \(i \rightarrow \infty\), and that the \(\phi_i\) are bounded. In this case we can extend the result of Theorem 1.2 in \cite{FJLR15}. For \(\ugamma \in Z_0\), let
\[\alpha_3(\ugamma):=\sup_{\mu \in \MCM(\Lambda,T)} \left\{\frac{h_{\mu}}{\lambda_{\mu}}:\int \phi_i \diff \mu =\gamma_i, \forall i \in \N, \lambda_{\mu}<\infty \right\}. \]
Analogously to \cite{FJLR15}, we define
\[s_{\infty}=\inf \{ t \geq 0:P(-t\log|T'|)<\infty\}, \]
where, for a function \(\phi :\Lambda \rightarrow \R\), \(P(\phi)\) is the pressure defined by
\[P(\phi):=\sup_{\mu \in \MCM(\Lambda,T)} \left\{h_{\mu}+\int \phi \diff \mu: -\int \phi \diff \mu<\infty \right\}. \]
We remark that if the topological entropy \(h_{\text{top}}(\Sigma,\sigma)\) is finite (see Section \ref{subsec:toppressure}), then \(s_{\infty}=0\).

\begin{theo}\label{theo:uiexact}
In the setting of Theorem \ref{theo:uiapprox}, let \(|T'|\) further satisfy condition (\ref{eqn:uiexactcondition}) and let the \((\phi_i)_{i \in \N}\) be bounded. For \(\ugamma \in Z_0\) we have
\[\dim \Lambda_{\MCR}(\underline{\gamma})=\max\{s_{\infty},\alpha_3(\ugamma)\}.  \]
Moreover, for \(\ugamma \in Z \setminus Z_0\) we have
\[\dim \Lambda_{\MCR}(\ugamma) =s_{\infty}. \]
\end{theo}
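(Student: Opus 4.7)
By Theorem~\ref{theo:uiapprox} the dimension equals $\alpha_1(\ugamma)$, so the task reduces to showing $\alpha_1(\ugamma)=\max\{s_\infty,\alpha_3(\ugamma)\}$ on $Z_0$ and $\alpha_1(\ugamma)=s_\infty$ on $Z\setminus Z_0$. The inequality $\alpha_3(\ugamma)\le\alpha_1(\ugamma)$ on $Z_0$ is immediate, since any measure with $\int\phi_i\,\diff\mu=\gamma_i$ for every $i$ satisfies the approximate constraints defining $\alpha_1$. The remaining content is therefore an upper bound $\alpha_1\le\max\{s_\infty,\alpha_3\}$ and a lower bound $s_\infty\le\alpha_1$.

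For the upper bound I would take a sequence $(\mu_n)\subset\MCM(\Lambda,T)$ with $\lambda_{\mu_n}<\infty$ and $|\int\phi_i\,\diff\mu_n-\gamma_i|<\varepsilon_n$ for all $i\le k_n$, where $\varepsilon_n\to 0$, $k_n\to\infty$, and $h_{\mu_n}/\lambda_{\mu_n}\to\alpha_1(\ugamma)$. Lifting to $(\Sigma,\sigma)$ and using the semicontinuity/escape-of-mass framework of Iommi--Velozo and Iommi--Todd--Velozo \cite{IV19,ITV19}, pass to a subsequence admitting a decomposition $\mu_n\approx(1-t_n)\hat\mu_n+t_n\tilde\mu_n$ in which $\hat\mu_n$ converges in the cylinder topology to some $\mu_\infty\in\MCM(\Sigma,\sigma)$, $\tilde\mu_n([i])\to 0$ for every $i$, and $t_n\to t_\infty\in[0,1]$. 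If $t_\infty=0$, the uniform boundedness of the $\phi_i$ and their uniformly decaying variations transfer the approximate constraints to the limit, giving $\int\phi_i\,\diff\mu_\infty=\gamma_i$ for every $i$ (so $\ugamma\in Z_0$); upper semicontinuity of $h_\mu-\alpha_1\lambda_\mu$ then yields $h_{\mu_\infty}/\lambda_{\mu_\infty}\ge\alpha_1(\ugamma)$, whence $\alpha_1\le\alpha_3$. If $t_\infty>0$, condition~(\ref{eqn:uiexactcondition}) forces $\lambda_{\tilde\mu_n}\to\infty$, hence $\lambda_{\mu_n}\to\infty$, and for any $t>s_\infty$ the finiteness of $P(-t\log|T'|)$ gives
\[\frac{h_{\mu_n}}{\lambda_{\mu_n}}\le t+\frac{P(-t\log|T'|)}{\lambda_{\mu_n}}\longrightarrow t,\]
so letting $t\downarrow s_\infty$ yields $\alpha_1\le s_\infty$. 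For $\ugamma\in Z\setminus Z_0$ the alternative $t_\infty=0$ is excluded (it would force $\ugamma\in Z_0$), leaving only the escape bound.

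For the lower bound $s_\infty\le\alpha_1(\ugamma)$, for each $M$ I produce an ergodic measure $\nu_M$ supported on a finite topologically mixing subsystem of $\Sigma$ with all digits $\ge M$ and with $h_{\nu_M}/\lambda_{\nu_M}\to s_\infty$; such subsystems exist by standard Mauldin--Urbanski-type approximation of the critical pressure at $t=s_\infty^{+}$. Condition~(\ref{eqn:uiexactcondition}) gives $\lambda_{\nu_M}\to\infty$, and boundedness of the $\phi_i$ gives $\int\phi_i\,\diff\nu_M\to 0$ for each fixed $i$. For $\ugamma\in Z_0$, fix $\mu$ witnessing $\alpha_3(\ugamma)$ and form $\mu_{\tau,M}=(1-\tau)\mu+\tau\nu_M$: its averages equal $\gamma_i+\tau\bigl(\int\phi_i\,\diff\nu_M-\gamma_i\bigr)$, and a two-parameter choice (first $\tau$ small enough for the target $(\varepsilon,k)$, then $M$ so large that $\tau\lambda_{\nu_M}$ dominates $\lambda_\mu$) keeps the averages $\varepsilon$-close to $\gamma_i$ for $i\le k$ while, by affineness of $h$ and $\lambda$, the ratio $h_{\mu_{\tau,M}}/\lambda_{\mu_{\tau,M}}$ tends to $s_\infty$. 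For $\ugamma\in Z\setminus Z_0$ the same scheme works with $\mu$ replaced by a diagonal sequence of elements of $Z_0$ approximating $\ugamma$. The main obstacle is the upper-bound semicontinuity step: verifying that the Iommi--Todd--Velozo framework gives the required control of $h_\mu-t\lambda_\mu$ in the partial-escape regime, and that the approximate Birkhoff constraints and uniformly decaying variations transfer cleanly through the convergent part of the decomposition when the $\phi_i$ are only uniformly-variation-decaying rather than locally constant.
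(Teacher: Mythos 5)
Your overall architecture is the same as the paper's: reduce via Theorem \ref{theo:uiapprox} to showing $\alpha_1(\ugamma)=\max\{s_\infty,\alpha_3(\ugamma)\}$, prove $\alpha_1\ge s_\infty$ by mixing a constraint-satisfying measure with measures realising $s_\infty$, and prove $\alpha_1\le\alpha_3$ when $\alpha_1>s_\infty$ by a compactness-plus-semicontinuity argument. However, the technical heart of the theorem is exactly the step you defer, and your proposed route to it does not work. You posit a decomposition $\mu_n\approx(1-t_n)\hat\mu_n+t_n\tilde\mu_n$ into an invariant ``convergent'' part and an invariant ``escaping'' part; the paper explicitly points out (in the heuristics preceding Proposition \ref{prop:uppersemicontinuity}) that no such splitting of an invariant measure is available, and that this is precisely why Section \ref{sec:susspace} is needed. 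Moreover, in your case $t_\infty=0$ you invoke ``upper semicontinuity of $h_\mu-\alpha_1\lambda_\mu$,'' but upper semicontinuity of the entropy map fails in general for countable Markov shifts of infinite topological entropy, and $h_{\text{top}}(\Sigma,\sigma)$ may well be infinite here (Theorem \ref{theo:uppersemicontiuitymike} requires finite entropy). The paper's resolution is Proposition \ref{prop:uppersemicontinuity}: one transfers each $\nu_n$ to a measure on an auxiliary CMS $\Sigma_{\tau_m}$ of \emph{finite} topological entropy built from a suspension with roof $\approx\log|T'|$, applies Abramov's formula so that $h_{\pi^*M\nu}=h_\nu/\int\tau_m\,\diff\nu$, uses compactness of $\MCM_{\leq 1}(\Sigma_{\tau_m},\sigma)$ in the cylinder topology together with the escape-of-entropy bound, and identifies $\lim_m 2^m\delta_\infty(\Sigma_{\tau_m},\sigma)=s_\infty$ (Lemma \ref{lem:sinfinitydeltainfinity}) to absorb the lost-mass term. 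None of this is implied by the statements you cite, and your closing sentence concedes the point; as written the upper bound is unproved. (Your case $t_\infty>0$ pressure estimate is fine and is essentially Lemma \ref{lem:boundedlyapunov} in contrapositive form.)

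There is a second, smaller gap in your lower bound. You construct measures $\nu_M$ ``supported on a finite topologically mixing subsystem of $\Sigma$ with all digits $\ge M$.'' For a general topologically mixing CMS such subsystems need not exist: in a renewal-type shift every periodic orbit, hence every ergodic measure, must charge the low symbols, so the set of invariant measures supported on $\{M,M+1,\dots\}^{\N}$ is empty for $M\ge 2$. The paper's Lemma \ref{lem:approxsinfinity} avoids any support restriction: for $t_n<s_\infty$ the pressure $P(-t_n\log|T'|)$ is infinite, so one finds $\mu_n$ with $h_{\mu_n}-t_n\lambda_{\mu_n}>k_n\to\infty$, whence $\lambda_{\mu_n}\ge h_{\mu_n}>k_n\to\infty$ and $h_{\mu_n}/\lambda_{\mu_n}\to s_\infty$ (with a separate argument via \cite[Lemma 4.16]{IV19} when $s_\infty=0$ and $h_{\text{top}}<\infty$). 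Once you have these measures, your two-parameter convex combination $(1-\tau)\mu+\tau\nu_M$ is exactly the paper's Lemma \ref{sinfinityisabound}, and note that you do not need $\int\phi_i\,\diff\nu_M\to 0$ at all: boundedness of the $\phi_i$ alone lets you take $\tau$ of order $\varepsilon/\sup_i\|\phi_i\|_\infty$ so the constraints move by at most $O(\varepsilon)$ while $\tau\lambda_{\nu_M}\to\infty$ kills the ratio. Finally, for $\ugamma\in Z\setminus Z_0$ you should make explicit that the bounded, symbolically uniformly continuous $\phi_i$ pass to the weak* limit, so $\alpha_1(\ugamma)>s_\infty$ would produce a probability measure with $\int\phi_i\,\diff\nu=\gamma_i$ for all $i$, contradicting $\ugamma\notin Z_0$; this is how the paper concludes that case.
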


\begin{remark}
The methods used in \cite{FJLR15} can be modified to hold when the coding satisfies the big images and pre-images (BIP) property (see \cite[Definition 5.8]{Sar15}). Therefore, if \((\Sigma,\sigma)\) satisfies the BIP property, Theorem \ref{theo:uiapprox} and Theorem \ref{theo:uiexact} hold without taking the intersection with \(\Pi(\MCR)\). 
\end{remark}

We state our final theorem now but defer the definitions until Section \ref{section:countablemarkovshifts}. If, in addition to the conditions in Theorem \ref{theo:uiapprox}, we assume the functions \(\phi_i \in C_0(\Lambda) \) and \(\log|T'|-L \in C_0(\Lambda)\) for some \(L \geq \log \zeta\),  where \(C_0(\Lambda)\) is some class of bounded functions with variations uniformly tending to 0 which symbolically vanish at infinity, we can also get an exact-type result. We remark that \(\log|T'|\) being bounded implies that the topological entropy \(h_{\text{top}}(\Sigma,\sigma)\) is finite since \(h_{\mu}\leq \lambda_{\mu}\) for all \(\mu \in \MCM(\Lambda,T)\) (see Lemma \ref{lem:entropylessthanlyapunov} and Remark \ref{remark:entropylessthanlypunov}) and entropy is preserved under \(\Pi\). As the functions \(\phi_i\) are in \(C_0(\Lambda)\), for \(\ugamma \in Z \setminus \{0\} \) we have \(\Lambda(\ugamma) =\Lambda_{\MCR}(\ugamma)\) trivially. This allows us to calculate the dimension of the sets \(\Lambda(\ugamma)\) without taking any intersection. With these conditions, we show that \(Z\) can be written as
\[Z=\left\{ \underline{\gamma} \in \R^{\N}: \exists \mu \in \MCM_{\leq 1}(\Lambda,T), \forall i \in \N, \int \phi_i \diff \mu=\gamma_i \right\},\]
where \(\MCM_{\leq 1}(\Lambda,T)\) is the set of \(T\)-invariant sub-probability measures, that is, the set of \(T\)-invariant measures \(\mu\) with \(|\mu|:=\mu(\Lambda)<1\). For \(\ugamma \in Z\), let 
\[\alpha_4(\ugamma):= \sup_{\mu \in \MCM_{\leq 1}(\Lambda,T)} \left\{\frac{|\mu|h_{\frac{\mu}{|\mu|}}+(1-|\mu|)\delta_{\infty}}{|\mu|\lambda_{\frac{\mu}{|\mu|}}+(1-|\mu|)L}:\int \phi_i \diff \mu=\gamma_i  \right\},\]
where \(\delta_{\infty}\) is the entropy at infinity of \((\Sigma,\sigma)\) (see Definition \ref{def:entropyatinfinity}). When \(\mu\) is the zero measure, the quantity in the brackets is to be interpreted as \(\delta_{\infty}/L\).

\begin{theo}\label{theo:uiexactbounded}
In the setting of Theorem \ref{theo:uiapprox}, further assume that \((\phi_i)_{i \in \N} \subset C_0(\Lambda)\) and suppose that \(\log |T'|-L \in C_0(\Lambda)\) for some \(L \geq \log \zeta\). Then for \(\ugamma \in Z \setminus \{\underline{0}\}\) we have 
\[\dim \Lambda(\underline{\gamma})=\alpha_4(\ugamma).\]
Moreover, \(\underline{0} \in Z\) and satisfies
\[\dim \Lambda(\underline{0})=\max\{\alpha_4(\underline{0}),\dim \Lambda_{\MCT}\}. \]
\end{theo}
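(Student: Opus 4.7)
The plan is to derive Theorem \ref{theo:uiexactbounded} from Theorem \ref{theo:uiapprox} together with the sub-probability measure calculus developed in \cite{IV19, ITV19}, leveraging the two structural hypotheses $\phi_i \in C_0(\Lambda)$ and $\log|T'|-L \in C_0(\Lambda)$. First I would split $\Lambda(\ugamma) = \Lambda_{\MCR}(\ugamma) \cup \bigl(\Lambda_{\MCT} \cap \Lambda(\ugamma)\bigr)$. Since each $\phi_i$ symbolically vanishes at infinity, for $x \in \Lambda_{\MCT}$ and any fixed $i$ we have $\phi_i(T^n x) \to 0$, and hence $A_n \phi_i(x) \to 0$. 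So $\Lambda_{\MCT} \subset \Lambda(\underline{0})$ and, for $\ugamma \neq \underline{0}$, $\Lambda(\ugamma) = \Lambda_{\MCR}(\ugamma)$, to which Theorem \ref{theo:uiapprox} already gives $\dim \Lambda_{\MCR}(\ugamma) = \alpha_1(\ugamma)$.

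The core of the proof is then the identity $\alpha_1(\ugamma) = \alpha_4(\ugamma)$. For $\alpha_1 \le \alpha_4$, I would take a sequence $\mu_n \in \MCM(\Lambda,T)$ almost achieving the supremum in $\alpha_1(\ugamma)$ with $\int \phi_i \diff \mu_n \to \gamma_i$, and extract a weak$^*$ sub-probability limit $\mu$ of mass $|\mu| \in [0,1]$. Three limit passages then complete the inequality: (i) the ``lossy'' upper semicontinuity of entropy $\limsup h_{\mu_n} \le |\mu| h_{\mu/|\mu|} + (1-|\mu|)\delta_\infty$ from \cite{IV19, ITV19}; (ii) because $\log|T'|-L \in C_0(\Lambda)$, $\int \log|T'| \diff \mu_n \to |\mu|\lambda_{\mu/|\mu|} + (1-|\mu|)L$; (iii) because $\phi_i \in C_0(\Lambda)$, $\int \phi_i \diff \mu = \gamma_i$ for every $i$. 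Taking the ratio yields exactly the expression inside the supremum defining $\alpha_4(\ugamma)$. For the reverse inequality $\alpha_4 \le \alpha_1$, starting from a near-optimal sub-probability $\mu$ in $\alpha_4(\ugamma)$, I would build approximating probability measures $\mu_n = |\mu|\cdot(\mu/|\mu|) + (1-|\mu|)\nu_n$, where $\nu_n$ is a $T$-invariant probability measure supported deep in the tail of $\Sigma$ chosen so that $h_{\nu_n} \to \delta_\infty$ and $\lambda_{\nu_n} \to L$; such $\nu_n$ are produced from invariant measures on compact sub-systems exhausting the entropy at infinity, as in \cite{IV19, ITV19}. Because $\phi_i \in C_0(\Lambda)$, the integrals $\int \phi_i \diff \nu_n \to 0$, so $\int \phi_i \diff \mu_n \to \gamma_i$ for each $i$, and the entropy/Lyapunov ratio converges to the target in $\alpha_4(\ugamma)$. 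The characterisation of $Z$ via sub-probability measures announced before the theorem drops out of this argument too.

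The zero case needs only small additions. Membership $\underline{0} \in Z$ follows by taking $\mu$ supported on cylinders $[i]$ with $i$ tending to infinity: for fixed $k$, $\int \phi_i \diff \mu$ can be made uniformly small for $i \le k$ by symbolic vanishing at infinity. The dimension formula then follows from the set decomposition $\Lambda(\underline{0}) = \Lambda_{\MCR}(\underline{0}) \cup \Lambda_{\MCT}$: the first piece has dimension $\alpha_4(\underline{0})$ by the argument above applied to $\ugamma = \underline{0}$, and the second contributes $\dim \Lambda_{\MCT}$, giving the stated maximum.

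The principal obstacle is the first of the three limit passages: upper semicontinuity of entropy only holds up to the $(1-|\mu|)\delta_\infty$ correction, and establishing this rigorously for the projected interval system $(\Lambda, T)$ requires faithful transfer of the shift-space results of \cite{IV19, ITV19} via the coding $\Pi$ and the assumption that $\log|T'|$ is bounded. A secondary delicacy is the simultaneous control of three quantities (entropy, Lyapunov exponent, and all $\phi_i$-integrals) in the tail measures $\nu_n$; it is precisely the hypothesis $\log|T'|-L, \phi_i \in C_0(\Lambda)$ that makes this possible, since these integrals all converge to the ``limiting'' values $L$ and $0$ once $\nu_n$ is supported deep enough in the alphabet.
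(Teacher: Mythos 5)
Your proposal is correct and follows essentially the same route as the paper: reduce to \(\Lambda_{\MCR}(\ugamma)\) using that \(C_0\)-functions have vanishing Birkhoff averages on \(\Lambda_{\MCT}\), then prove \(\alpha_1(\ugamma)=\alpha_4(\ugamma)\) by combining compactness of \(\MCM_{\leq 1}(\Sigma,\sigma)\) in the cylinder topology, the lossy upper semi-continuity of entropy from \cite{ITV19}, test-function convergence for \(\phi_i\) and \(\log|T'|-L\), and tail measures \(\nu_n \rightarrow 0\) with \(h_{\nu_n} \rightarrow \delta_{\infty}\) for the reverse inequality. The only cosmetic difference is that the relevant limit is convergence on cylinders rather than weak* convergence of sub-probability measures, and the finiteness of \(h_{\text{top}}(\Sigma,\sigma)\) (from boundedness of \(\log|T'|\)) should be cited explicitly to justify the compactness and entropy results you invoke.
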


\begin{remark}
If instead we have \((\phi_i-L_i) \subset C_0(\Lambda)\) for some sequence \((L_i)_{i \in \N} \subset \R^{\N}\), then we can get the dimensions of \(\Lambda(\ugamma)\) by applying Theorem \ref{theo:uiexactbounded} to the functions \(\phi_i-L_i \in C_0(\Lambda)\).
\end{remark}

\begin{remark}\label{remark:noperiodic}
When \(E\) does contain periodic points, Theorems \ref{theo:uiapprox}, \ref{theo:uiexact} and \ref{theo:uiexactbounded} with hold more generally if \(Z_0\), \(Z\), \(s_{\infty}\), and the \(\alpha_i(\ugamma)\) are instead defined in terms of shift invariant measures and the corresponding uniformly continuous functions on the shift space. The difference in this case is due to shift invariant measures being able to give mass to \(\Pi^{-1}(E)\), where \(\phi_i \circ \Pi\) and \(\log|T' \circ \Pi|\) can have points of discontinuity (see Section \ref{subsec:bd}). Note that only minor modifications to our proofs are needed for this more general setting since we work predominantly on the shift space. However, we do not provide the details here.
\end{remark}

In the next section we introduce some basic definitions and prove some distortion estimates. In Section \ref{section:countablemarkovshifts} we then recall some theory of countable Markov shifts with finite topological entropy. While we are not assuming \((\Sigma,\sigma)\) has finite topological entropy, the suspension space on \(\Sigma\) with roof function \(\log|T'|\) will have finite topological entropy. Hence in Section \ref{sec:susspace}, via Abramov's formula we are able to relate the quantities \(\frac{h_{\mu}}{\lambda_{\mu}}\) to the entropy of a measure on a CMS with finite topological entropy. We use this to prove two propositions: Proposition \ref{prop:entropydensity} and Proposition \ref{prop:uppersemicontinuity}. The first allows us to approximate arbitrary shift invariant measures with ergodic measures supported on finite sub-shifts. The second gives us upper semi-continuity of the map \(\mu \mapsto \frac{h_{\mu}}{\lambda_{\mu}}\) in the weak* topology when the measures satisfy \(\frac{h_{\mu}}{\lambda_{\mu}} > s_\infty \) (in the setting of Theorem \ref{theo:uiexact}). Note that this is a necessary tool for proving Theorem \ref{theo:uiexact} from Theorem \ref{theo:uiapprox}. In \cite{FJLR15} they prove this in the specific case where \((\Sigma,\sigma)\) is the full shift. However, their methods rely on the uniform structure of the shift space and cannot be used here. The following three sections are devoted to proving Theorem \ref{theo:uiapprox}, Theorem \ref{theo:uiexact} and Theorem \ref{theo:uiexactbounded} respectively. Finally, in Section \ref{sec:applications} we discuss some applications. In particular, to the frequency of digits case and the map \(F_{\lambda}:(0,1] \rightarrow (0,1]\), defined for \(\lambda \in (0,1)\) by 
\[F_{\lambda}(x):=
  \begin{cases}
    \frac{x-\lambda}{1-\lambda}, & \text{for } x \in I_1 \\
    \frac{x-\lambda^n}{\lambda(1-\lambda)}, & \text{for } x \in I_n, n \geq 2,
  \end{cases}\]
which was studied in \cite{SV97}, \cite{BT12}, \cite{BT15} and \cite{IJT17}. We finish by discussing some cases when \(\alpha_4(\ugamma)\) can instead be written as a supremum over probability measures. 

\section{Preliminary definitions and lemmas}\label{sec:prelims}
\subsection{CMS and basic intervals}\label{subsect:CMSandbi}
Let \(\N^{\N}\) be the set
\[\N^{\N}:=\{ (\omega_1,\omega_2,\omega_3,\ldots):\omega_i \in \N \}. \]
Given an \(\N \times \N\) matrix \(A\) with entries \(0\) or \(1\), consider the \textit{countable Markov shift} \( (\Sigma,\sigma) \) where 
\[\Sigma:=\{\omega \in \N^{\N}: A_{\omega_i,\omega_{i+1}}=1, \forall i \in \N\}\]
and the \textit{shift map} \(\sigma : \Sigma \rightarrow \Sigma\) is defined by
\[\sigma (\omega_1,\omega_2,\omega_3,\ldots)=(\omega_2,\omega_3,\ldots).\] 
An admissible word of length \(n\) is a string \((\omega_1,\omega_2,\ldots,\omega_n) \in \N^n\) such that \(A_{\omega_i,\omega_{i+1}}=1\) for all \(i=1,\ldots,n-1\). We denote by \(\MCC_{n}(\Sigma)\) the set of all admissible words of length \(n\). For a point \(\omega=(\omega_1,\omega_2,\ldots) \in \Sigma\) we use \(\omega|_i^j\) to denote the word \((\omega_i,\ldots,\omega_j) \in \MCC_{j-i+1}(\Sigma)\). For \((\omega_1,\ldots,\omega_n) \in \MCC_{n}(\Sigma)\) the \(n\)th level cylinder \([\omega_1,\ldots,\omega_n]\) is defined by \(\{\omega' \in \Sigma:\omega'_i=\omega_i, \forall i=1,\ldots,n\}\) and the \(n\)th level basic interval is defined by
\[C_n(\omega_1,\ldots,\omega_n):=\Conv(\Pi([\omega_1,\ldots, \omega_n]) \setminus \cup_{j=0}^{\infty} T^{-j}E),\]
where \(\Conv\) denotes the convex hull. An admissible word \(w\) is said to connect \(a,b \in \N\) if the cylinder \([a,w,b]\) is non-empty.  In this paper we always assume the coding \((\Sigma,\sigma)\) corresponding to \((\Lambda,T)\) is \textit{topologically mixing}, that is, for each pair \(a,b \in \N\) there exists an \(N \in \N\) such that for all \(n \geq N\) there is an admissible word of length \(n\) connecting \(a\) and \(b\). In Section \ref{sec:susspace}, from the coding \((\Sigma,\sigma)\) we will construct a CMS which may not be topologically mixing, but will be \textit{topologically transitive}. This is a weaker condition and means that for each pair \(a,b \in \N\) there exists an admissible word connecting \(a\) and \(b\).

We endow \(\Sigma\) with the topology generated by the cylinders.  We use the metric \(d:\Sigma \times \Sigma \rightarrow \R\) on \(\Sigma\) defined by
\[d(\omega,\omega')=
\begin{cases}
1 & \text{ if } \omega_1 \not= \omega'_1 \\
2^{-k} & \text{ if } \omega_i = \omega'_i \text{ for all } i=1,\ldots,k \text{ and } \omega_{k+1} \not= \omega'_{k+1} \\
0 & \text{ if } \omega=\omega'.
\end{cases} \]
This generates the same topology as that of the cylinders. 

\subsection{Bounded distortion}\label{subsec:bd}
For \(\phi:\Lambda \rightarrow \R\), let
\[\var_n(\phi):=\sup\{|\phi(x)-\phi(y)|:x,y \in C_n(\omega), \omega \in \Sigma\}.\]

\begin{defn}\label{def:variationstendinguniformly}
Let \(\phi:\Lambda \rightarrow \R\). We say that \(\phi\) has \textit{variations uniformly tending to} 0 if \(\var_1 \phi < \infty\) and \(\lim_{n \rightarrow \infty} \var_n (\phi)=0\).
\end{defn}

We define \(\log|T'|:\cup_{i \in \N} \overline{I_i} \rightarrow \R\) to have variations uniformly tending to 0 similarly. Note that if \(\phi\) has variations uniformly tending to 0 then there exists a uniformly continuous function, which we will denote by \(f_{\phi}\), such that \(f_{\phi}(\omega)=\phi \circ \Pi(\omega)\) for all \(\omega \in \Sigma \setminus \Pi^{-1}(E)\). Since we are assuming \(E\) does not contain any periodic points, it cannot support any invariant measures. It follows that for every \(\nu \in \MCM(\Sigma,\sigma)\) and every \(\phi\) with variations uniformly tending to 0 
\begin{equation}\label{eqn:integralequality}
    \int f_{\phi} \diff \nu= \int \phi \circ \Pi \diff \nu.
\end{equation}
We also have that for every \(\phi\) with variations uniformly tending to 0, every \(n \in \N\), and all \(x \in \Lambda \setminus \cup_{j=0}^{\infty} T^{-j} E\)
\begin{equation}\label{eqn:birkhoffaverageequality}
    A_n \phi(x)=A_n f_{\phi}(\Pi^{-1}(x)).
\end{equation} 

Given a basic interval \(C_n(\omega_1,\ldots, \omega_n)\), we define
\[M^* \phi(\omega_1,\ldots,\omega_n):=\sup_{\omega \in C_n(\omega_1,\ldots,\omega_n)} A_n \phi(\omega)\]
\[M_* \phi(\omega_1,\ldots,\omega_n):=\inf_{\omega \in C_n(\omega_1,\ldots,\omega_n)} A_n \phi(\omega).\]

\begin{lemma}\label{lem:variationsgoingtozero}\cite[Lemma 2.2]
{FJLR15}
Let \(\phi: \Lambda \rightarrow \R\) have variations uniformly tending to 0. Then 
\[\lim_{n \rightarrow \infty} \sup_{(\omega_1,\ldots,\omega_n)\in \MCC_n(\Sigma)} M^* \phi(\omega_1,\ldots,\omega_n)-M_* \phi(\omega_1,\ldots,\omega_n)=0.\]
\end{lemma}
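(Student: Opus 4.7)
The plan is to establish the uniform bound
\[\sup_{(\omega_1,\ldots,\omega_n)\in \MCC_n(\Sigma)} \bigl(M^*\phi(\omega_1,\ldots,\omega_n) - M_*\phi(\omega_1,\ldots,\omega_n)\bigr) \;\leq\; \frac{1}{n}\sum_{k=1}^{n} \var_k(\phi),\]
and then invoke the standard fact that the Cesàro means of a bounded null sequence tend to zero. The structural input is the compatibility $T(C_n(\omega_1,\ldots,\omega_n)) \subseteq C_{n-1}(\omega_2,\ldots,\omega_n)$, which comes from the semi-conjugacy $T\circ\Pi=\Pi\circ\sigma$ on $\Sigma\setminus\Pi^{-1}(E)$ together with the fact that each branch $T_i$, being an injective continuous map on an interval, is monotone and therefore sends convex sets to convex sets.

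First I would fix an admissible word $(\omega_1,\ldots,\omega_n)\in\MCC_n(\Sigma)$ and two points $x,y \in C_n(\omega_1,\ldots,\omega_n)$. Iterating the containment above, for each $0\leq i\leq n-1$ both $T^i(x)$ and $T^i(y)$ lie in the basic interval $C_{n-i}(\omega_{i+1},\ldots,\omega_n)$, so by definition of $\var_{n-i}(\phi)$ we have $|\phi(T^ix)-\phi(T^iy)|\leq \var_{n-i}(\phi)$. Averaging in $i$,
\[|A_n\phi(x) - A_n\phi(y)| \;\leq\; \frac{1}{n}\sum_{i=0}^{n-1}\var_{n-i}(\phi) \;=\; \frac{1}{n}\sum_{k=1}^{n}\var_k(\phi).\]
The right-hand side is independent of $x,y$ and of the admissible word chosen, so taking the supremum over $x,y\in C_n(\omega_1,\ldots,\omega_n)$ and then over words in $\MCC_n(\Sigma)$ yields the claimed uniform bound.

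To conclude, since $\var_1(\phi)<\infty$ and $\var_k(\phi)\to 0$ as $k\to\infty$ by hypothesis, the Cesàro means $\tfrac{1}{n}\sum_{k=1}^n\var_k(\phi)$ tend to $0$, which gives the lemma. The only delicate point, and the one I would flag as the main obstacle, is justifying the orbit-wise containment $T^i(C_n(\omega_1,\ldots,\omega_n))\subseteq C_{n-i}(\omega_{i+1},\ldots,\omega_n)$ given that the basic intervals are defined as convex hulls after removing the countable set $\bigcup_{j\geq 0} T^{-j}E$; this is handled by noting that removing an at most countable set does not change the convex hull of a set in $[0,1]$, and that the monotonicity of each $T_i$ on $\overline{I_i}$ preserves convex hulls under forward iteration.
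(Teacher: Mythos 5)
Your proof is correct and takes essentially the same route as the paper, whose entire argument is the one-line assertion that \(M^*\phi(\omega_1,\ldots,\omega_n)-M_*\phi(\omega_1,\ldots,\omega_n) \leq \frac{1}{n}\sum_{j=1}^{n}\var_j(\phi) \rightarrow 0\); you simply supply the justification (the containment \(T^i(C_n(\omega_1,\ldots,\omega_n)) \subseteq C_{n-i}(\omega_{i+1},\ldots,\omega_n)\) via monotonicity of the branches) that the paper leaves implicit.
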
 

\begin{proof}
This follows straightforwardly as for any \(n \in \N\) and \((\omega_1,\ldots, \omega_n) \in \MCC_{n}(\Sigma) \)
\[|M^* \phi(\omega_1,\ldots,\omega_n)-M^* \phi(\omega_1,\ldots,\omega_n)| \leq \frac{1}{n} \sum^{n}_{j=1} \var_j \phi \xrightarrow[n \rightarrow \infty]{} 0.\]
\end{proof}

We adapt Lemma 2.3 in \cite{FJLR15} to our setting. Note that we have an additional term since we are allowing \(\diam(T(I_{i}))\) to be less than 1.

\begin{lemma}\label{lem:diameterfunction}
There exists a positive sequence \(\varepsilon(n)\) converging to 0 such that for any \(\omega \in \Sigma \setminus \Pi^{-1}(\cup_{j=0}^{\infty} T^{-j} E)\) we have
\[\left|\frac{\log(\emph{diam}(C_n(\omega)))}{n}- A_n(-\emph{log}|T' \circ \Pi(\omega)|) \right| \leq \frac{- \log(\emph{diam}(T(C_1(\omega_n))))}{n} + \varepsilon(n). \]
\end{lemma}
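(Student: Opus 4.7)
The argument follows the standard mean-value-theorem-plus-distortion proof in \cite[Lemma 2.3]{FJLR15}, with the extra term $-\log\diam(T(C_1(\omega_n)))/n$ appearing precisely because $T(C_1(\omega_n))$ need not be all of $[0,1]$. First I would identify the image of $T^n$ restricted to $C_n(\omega_1,\ldots,\omega_n)$: because each branch $T_{\omega_i}$ is an injective $C^1$ map on $\overline{I_{\omega_i}}$ and admissibility $A_{\omega_i,\omega_{i+1}}=1$ guarantees $I_{\omega_{i+1}}\subseteq T_{\omega_i}(\overline{I_{\omega_i}})$ (modulo endpoints), a short induction on $n$ shows that $T^{n-1}$ restricts to a monotone $C^1$ diffeomorphism from $C_n(\omega_1,\ldots,\omega_n)$ onto $C_1(\omega_n)$, so that $T^n(C_n(\omega))=T(C_1(\omega_n))$. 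The mean value theorem applied to this diffeomorphism then produces $y\in C_n(\omega)$ with
\[\diam(T(C_1(\omega_n)))=|(T^n)'(y)|\cdot\diam(C_n(\omega)).\]

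Next I would compare $|(T^n)'(y)|$ with $|(T^n)'(\Pi(\omega))|$ by a standard distortion estimate. For any $z,z'\in C_n(\omega)$ and any $0\leq i\leq n-1$, the iterates $T^i(z),T^i(z')$ both lie in the basic interval $C_{n-i}(\omega_{i+1},\ldots,\omega_n)$, so by definition of $\var_{n-i}$ we have $\bigl|\log|T'(T^iz)|-\log|T'(T^iz')|\bigr|\leq\var_{n-i}(\log|T'|)$. Summing over $i$ and taking $z=y$, $z'=\Pi(\omega)$ yields
\[\bigl|\log|(T^n)'(y)|-\log|(T^n)'(\Pi(\omega))|\bigr|\leq\sum_{j=1}^n\var_j(\log|T'|).\]
Setting $\varepsilon(n):=\frac{1}{n}\sum_{j=1}^n\var_j(\log|T'|)$, the hypothesis that $\log|T'|$ has variations uniformly tending to $0$ combined with Ces\`aro summation gives $\varepsilon(n)\to 0$.

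Taking logarithms in the MVT identity, dividing by $n$, substituting $\log|(T^n)'(\Pi(\omega))|=-n\cdot A_n(-\log|T'\circ\Pi(\omega)|)$, and using the distortion bound, I obtain
\[\frac{\log\diam(C_n(\omega))}{n}-A_n(-\log|T'\circ\Pi(\omega)|)=\frac{\log\diam(T(C_1(\omega_n)))}{n}+\theta(n),\]
with $|\theta(n)|\leq\varepsilon(n)$. Since $\diam(T(C_1(\omega_n)))\leq 1$ makes $\log\diam(T(C_1(\omega_n)))\leq 0$, the triangle inequality then delivers the claimed bound. The only non-routine point is the first-step identification $T^n(C_n(\omega))=T(C_1(\omega_n))$, which requires some care with the bad set $\bigcup_{j\geq 0}T^{-j}E$ excluded in the definition of the basic intervals; this is straightforward because bad preimages in $\Pi([\omega_1,\ldots,\omega_n])$ map under $T^{n-1}$ into bad preimages of $\Pi([\omega_n])$, so the convex hulls of the respective good sets match up exactly.
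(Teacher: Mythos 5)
Your proof is correct and follows essentially the same route as the paper's: the mean value theorem applied to $T^n$ on $C_n(\omega)$ together with the identification $T^n(C_n(\omega))=T(C_1(\omega_n))$, and then control of the discrepancy between the MVT point and $\Pi(\omega)$ by the Ces\`aro average $\frac{1}{n}\sum_{j=1}^n\var_j(\log|T'|)$, which is exactly the content of Lemma \ref{lem:variationsgoingtozero} that the paper invokes. You simply spell out the distortion sum and the image identification more explicitly than the paper does.
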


\begin{proof}
By the Mean Value Theorem we have 
\[\log (\diam( C_n(\omega)))-n A_n(-\log|T'(x)|)=\log( \diam(T^n( C_n(\omega)))\]
for some \(x \in C_n(\omega)\). Hence,
\[\left|\log( \diam( C_n(\omega)))-n A_n(-\log|T'(x) \right|) \leq - \log(\diam(T(C_1(\omega_n)))).\] 
We can then apply Lemma \ref{lem:variationsgoingtozero} to \(\log|T'|\) since we are assuming it has variations uniformly tending to 0.
\end{proof}

\subsection{Entropy of invariant measures}\label{subsec:entropy}
We briefly recall the definition of the entropy of an invariant probability measure (for more details, see \cite[Chapter 4]{Wal81}). Let \((Y,\MCB, f, \mu)\) be a probability-preserving transformation. A partition \(\beta\) is a finite or countable collection of subsets \(\xi_i \in \MCB\) such that
\(\xi_i \cap \xi_j = \emptyset\) if \(i \not= j\) and \(\bigcup_i \xi_i=Y\). The \textit{entropy of the partition} is defined to be
\[H_{\mu}(\beta)= - \sum_i \mu(\xi_i) \log \mu(\xi_i) \] 
where \(0 \log 0 :=0\). Note it is possible for \(H_{\mu}(\xi)\) to be infinite. For a partition \(\beta\), we define \(f^{-1} (\beta):= \{ f^{-1}(\xi_i):\xi_i \in \beta\}\). Then \(f^{-1}(\beta)\) is also a partition. Furthermore, for two partitions \(\beta, \beta'\) we define the join \(\beta \wedge \beta'\) to be the set \(\{ \xi_i \cap \xi'_j:\xi_i \in \beta, \xi'_j \in \beta'\}\). Again, the set \(\beta \wedge \beta'\) is also a partition. The \textit{entropy of \(\mu\) with respect to \(\beta\)} is then defined to be
\[h_{\mu}(\beta):= \lim_{n \rightarrow \infty} \frac{1}{n} H_{\mu} \left(\wedge_{i=0}^{n-1} f^{-1}(\beta) \right).\] 
Finally, the \textit{entropy of \(\mu\)} is defined to be 
\[ h_{\mu}(\sigma):=\sup_{\beta}\{ h_{\mu}(\beta): \beta \text{ is a partition with } H_{\mu}(\beta)<\infty \}.\]

We have the following lemma bounding the entropy of measures \(\mu \in \MCM_{\MCE}(\Lambda,T)\) by their Lyapunov exponents. The proof is standard and likely known. However, as there are some adjustments needed to prove it in the setting we are working in, we include it here for completeness.

\begin{lemma}\label{lem:entropylessthanlyapunov}
For any \(\mu \in \MCM_{\MCE}(\Lambda,T)\), we have \(h_{\mu} \leq \lambda_{\mu} \).
\end{lemma}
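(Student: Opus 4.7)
The plan is to push the problem to the shift space and combine Gibbs' inequality with the diameter estimate of Lemma~\ref{lem:diameterfunction}. Since $E$ supports no invariant measure, the projection $\Pi$ gives a measure-theoretic isomorphism between $(\Lambda,T,\mu)$ and $(\Sigma,\sigma,\nu)$ for a unique ergodic shift-invariant $\nu$; entropy is preserved and $\lambda_\mu = \int \log|T'\circ\Pi|\,d\nu$ by \eqref{eqn:integralequality}. I would assume $\lambda_\mu<\infty$, as the bound is trivial otherwise.

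The workhorse is the partition $\alpha := \{[i]\}_{i\in\N}$ of $\Sigma$ by one-cylinders, which is a Borel generator for $\sigma$. In the case $H_\nu(\alpha)<\infty$, the Kolmogorov--Sinai theorem yields $h_\mu = h_\nu = \lim_{n\to\infty} \tfrac{1}{n} H_\nu(\alpha^n)$, where $\alpha^n = \bigvee_{k=0}^{n-1} \sigma^{-k}\alpha$ is the partition into $n$-cylinders. The key geometric observation is that the basic intervals $\{C_n(\omega|_1^n)\}_{\omega|_1^n\in\MCC_n(\Sigma)}$ are pairwise disjoint sub-intervals of $[0,1]$ (up to their endpoints, which do not support invariant measures), so $\sum_{\omega|_1^n} \diam(C_n(\omega|_1^n)) \leq 1$. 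Applying Gibbs' inequality to the probability vector $\{\nu([\omega|_1^n])\}$ against the sub-probability vector $\{\diam(C_n(\omega|_1^n))\}$ yields
\[
H_\nu(\alpha^n) \leq -\sum_{\omega|_1^n \in \MCC_n(\Sigma)} \nu([\omega|_1^n])\log\diam(C_n(\omega|_1^n)) = \int_\Sigma -\log\diam(C_n(\omega|_1^n))\, d\nu(\omega).
\]
Bounding the integrand pointwise by Lemma~\ref{lem:diameterfunction}, integrating, and using $\sigma$-invariance of $\nu$, I obtain
\[
\tfrac{1}{n} H_\nu(\alpha^n) \;\leq\; \lambda_\mu \;+\; \tfrac{K}{n} \;+\; \varepsilon(n), \qquad K:=\sum_{i\in\N} \nu([i])\cdot(-\log\diam(T(I_i))).
\]
Letting $n\to\infty$ then gives $h_\mu \leq \lambda_\mu$ whenever $K<\infty$.

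The main obstacle is the correction term $K$, which does not appear in the full-shift setting of \cite{FJLR15} (where $\diam(T(I_i))=1$ for every $i$) but can be infinite in our Markov setting, precisely when the images $T(I_i)$ are arbitrarily small in a $\nu$-significant way. To handle the case $K=\infty$ I would use an exhaustion argument: take an increasing sequence of topologically mixing finite sub-shifts $\Sigma_N\uparrow\Sigma$ (available by topological mixing of $(\Sigma,\sigma)$), approximate $\mu$ by ergodic measures $\mu_N$ supported in $\Pi(\Sigma_N)$ on which the corresponding quantity $K_N$ is automatically finite, apply the estimate above on each $\Sigma_N$ to obtain $h_{\mu_N}\leq \lambda_{\mu_N}$, and pass to the limit using upper semi-continuity of entropy and continuity of the Lyapunov exponent along the approximating sequence. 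The remaining degenerate case $H_\nu(\alpha)=\infty$ is treated analogously, first replacing $\alpha$ by truncated finite partitions of the form $\{[1],\ldots,[N],\bigcup_{i>N}[i]\}$ and refining before taking limits.
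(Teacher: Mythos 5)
Your Gibbs-inequality estimate is correct as far as it goes, and when $K<\infty$ it gives a clean alternative to the paper's argument (indeed $K<\infty$ together with $\lambda_\mu<\infty$ already forces $H_\nu(\alpha)<\infty$ via Gibbs at $n=1$ and Lemma \ref{lem:diameterfunction}, so your two exceptional cases collapse into the single case $K=\infty$). The genuine gap is precisely there. Your exhaustion argument needs ergodic measures $\mu_N$ supported on finite sub-shifts with $\liminf_N h_{\mu_N}\geq h_\mu$ and $\limsup_N \lambda_{\mu_N}\leq\lambda_\mu$; the ``upper semi-continuity of entropy'' you invoke gives $\limsup_N h_{\mu_N}\leq h_\mu$, which is the wrong direction and yields nothing. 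What you actually need is an entropy-dense approximation by compactly supported ergodic measures, and for a general topologically mixing CMS this is exactly Proposition \ref{prop:entropydensity} of the paper --- a result established only in Section \ref{sec:susspace} via the suspension-space machinery, whose key step (Lemma \ref{lem:finitetopentropy}, finiteness of $h_{\text{top}}(\Sigma_{\tau_m},\sigma)$) invokes the very inequality $h_\mu\leq\lambda_\mu$ being proved. So as written the $K=\infty$ case is either circular or unproved. Note also that there is no canonical way to ``restrict'' an ergodic $\mu$ to $\Pi(\Sigma_N)$: such a $\mu$ may give $\{1,\ldots,N\}^{\N}$ measure zero, so constructing the $\mu_N$ is genuinely the content of Proposition \ref{prop:entropydensity}, not a routine conditioning.

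The paper sidesteps the integrability of $-\log\diam(T(I_i))$ entirely by replacing the integral (Gibbs) estimate with a counting estimate on a good set. Using Shannon--McMillan--Breiman, Birkhoff and Egorov, one finds $\Omega$ with $\nu(\Omega)>1-\varepsilon$ on which cylinder measures and Birkhoff averages of $-\log|T'\circ\Pi|$ are uniformly controlled for $n\geq N$, and --- this is the key trick --- one intersects with $\{\omega:\omega_n\leq k\}$, a set of measure $>1-\varepsilon$ for \emph{every} $n$ by invariance. On that set the correction term in Lemma \ref{lem:diameterfunction} is bounded by $-\frac1n\log\min_{i\leq k}\diam(T(I_i))\rightarrow0$ uniformly, so no integral of $-\log\diam(T(I_i))$ ever appears. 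Comparing the number of surviving cylinders against the total length $1$ then gives $e^{n(\lambda_\mu+2\varepsilon)}\geq(1-2\varepsilon)e^{n(h_\mu-\varepsilon)}$, and the lemma follows on letting $\varepsilon\rightarrow0$. Some such truncation to a uniformly good, positive-measure set is unavoidable in your approach as well, and your current write-up does not supply it.
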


\begin{proof}
Clearly this holds if \(\lambda_{\mu}= \infty \), so assume \( \lambda_{\mu}<\infty.\) We prove the case when \(h_{\mu}<\infty\); the case when \(h_{\mu}=\infty\) can be proved similarly. Let \(\varepsilon>0\).  Given \(\mu \in \MCM_{\MCE}(\Lambda,T)\), let \(\nu \in \MCM_{\MCE}(\Sigma,\sigma)\) be such that \(\mu=\Pi^* \nu \) and let \(k \in \N\) be such that \(\nu(\cup_{i \leq k} [i])>1-\varepsilon \). By the Shannon-McMillan-Breiman theorem, Birkhoff's ergodic theorem, Egorov's theorem and Lemma \ref{lem:diameterfunction}, there exists \(\Omega \subset \Sigma \) with \(\nu(\Omega)>1-\varepsilon\) and \(N \in \N\) such that for all \(\omega \in \Omega\) and all \(n \geq N\)
\begin{equation*}
    \nu([\omega_1,\ldots, \omega_n])\leq e^{-n(h_{\mu}-\varepsilon)}
\end{equation*}
\begin{equation*}
    A_n(-\log|T' \circ \Pi(\omega)|)\geq-(\lambda_{\mu}+\varepsilon)
\end{equation*}
\begin{equation*}
    \sup_{\omega \in \Omega, \omega_n \leq k} \left|\frac{\text{log}(\diam(C_n(\omega)))}{n}- A_n(-\log|T' \circ \Pi(\omega)|) \right| \leq \varepsilon.
\end{equation*}
For any \(n \geq N\), let
\[\MCA_n:=\{(\omega_1, \ldots, \omega_n) \in \MCC_n(\Sigma):[\omega_1, \ldots, \omega_n] \cap \{ \omega \in \Omega: \omega_n \leq k  \} \not= \emptyset  \}. \] 
We have 
\[1-2\varepsilon<\nu(\{ \omega \in \Omega: \omega_n \leq k  \}) \leq  \# \MCA_n   e^{-n(h_{\mu}-\varepsilon)} \]
and
\[1 \geq \sum_{(\omega_1, \ldots, \omega_n) \in \MCA_n} \diam(C_n(\omega_1, \ldots, \omega_n))  \geq \# \MCA_n e^{-n(\lambda_{\mu}+2\varepsilon)}.  \]
Hence
\[ e^{n(\lambda_{\mu}+2\varepsilon)} \geq  (1-2\varepsilon) e^{n(h_{\mu}-\varepsilon)},\]
and so 
\begin{align*}
    \lambda_{\mu} &\geq \frac{1}{n} \log(1-2\varepsilon)+h_{\mu}-3 \varepsilon \\
    &\geq \log(1-2\varepsilon)+h_{\mu}-3 \varepsilon.
\end{align*}
The result follows by letting \(\varepsilon \rightarrow 0\).
\end{proof}

\begin{remark}\label{remark:entropylessthanlypunov}
Lemma \ref{lem:entropylessthanlyapunov} is sufficient for our use in the proof of Theorem \ref{theo:uiapprox}. However, it then follows from Theorem \ref{theo:uiapprox} that the inequality \(h_{\mu} \leq \lambda_{\mu}\) holds for all \(\mu \in \MCM(\Lambda,T)\). 
\end{remark}

\subsection{Topological pressure and topological entropy}\label{subsec:toppressure}

Recall that for a function \(\phi :\Lambda \rightarrow \R\), the pressure \(P(\phi)\) is defined by
\[P(\phi):=\sup_{\mu \in \MCM(\Lambda,T)} \left\{h_{\mu}+\int \phi \diff \mu: -\int \phi \diff \mu<\infty \right\}. \]
While this definition of pressure is sufficient for our purposes, we remark that when \(\phi\) has summable variations, that is \(\sum_{n=1}^{\infty} \var_n(\phi)<\infty \), this can be alternatively stated (see \cite[Theorem 3]{Sar99} and \cite[Theorem 2.10]{IJT15}) as 
\begin{equation}\label{eqn:topologicalpressure}
    P_{\text{top}}(\phi):=\lim_{n \rightarrow \infty} \frac{1}{n} \log \sum_{\substack{\sigma^n \omega=\omega}} e^{S_n \phi \circ \Pi(\omega)} 1_{[a]}(\omega), 
\end{equation}
where the value does not depend on the \(a \in \N\) chosen. This is equivalently the \textit{Gurevich pressure} of \(\phi \circ \Pi\) on \((\Sigma,\sigma)\). This was defined by Sarig in \cite{Sar99} based on work by Gurevich \cite{Gur69}. The Gurevich pressure of the zero function is of special importance. We call this the \textit{topological entropy} of \((\Sigma,\sigma)\) and denote it by \(h_{\text{top}}(\Sigma,\sigma)\). Explicitly, 
\[h_{\text{top}}(\Sigma,\sigma):= \lim_{n \rightarrow \infty} \frac{1}{n} \sum_{\sigma^n \omega= \omega} 1_{[a]}(\omega).\]
If \((\Sigma,\sigma)\) is topologically transitive but not topologically mixing then the limit is replaced by a limsup. In either case, this satisfies the variational principle 
\begin{align}\label{eqn:hvariationalprinciple}
    h_{\text{top}}(\Sigma,\sigma) &=\sup\{ h_\nu : \nu \in \MCM(\Sigma, \sigma)\} \nonumber \\
    &= \sup\{ h_\nu : \nu \in \MCM_{\MCE}(\Sigma, \sigma)\},
\end{align}
where the second equality follows from \cite[Theorem 2]{Sar99} and \cite[Corollary 8.6.1]{Wal81}. 

\section{Countable Markov shifts with finite topological entropy}\label{section:countablemarkovshifts}
In this section we recall some definitions and theory for countable Markov shifts with finite topological entropy. For a more thorough account we refer the reader to \cite{IV19} and \cite{ITV19}. We emphasise that we are not in general assuming the shift space \((\Sigma,\sigma)\) corresponding to \((\Lambda,T)\) has finite topological entropy. However, in Section \ref{sec:susspace} we show that the quantities \(\frac{h_{\mu}}{\lambda_{\mu}}\) can be related to the entropies of measures on a topologically transitive (but not necessarily topologically mixing) CMS with finite topological entropy. Furthermore, under the conditions of Theorem \ref{theo:uiexactbounded}, the topological entropy of \((\Sigma,\sigma)\) corresponding to \((\Lambda,T)\) will indeed be finite, so the theory outlined here will be useful for us once again. Throughout this section, to account for the fact that the CMS in Section \ref{sec:susspace} may not be topologically mixing, by \((\Sigma,\sigma)\) we mean a topologically transitive CMS with finite topological entropy.

\subsection{The space of invariant measures}
Recall we denote by \( \MCM(\Sigma,\sigma)\) the set of all \(\sigma\)-invariant probability measures on \(\Sigma\) and \(\MCM_{\MCE}(\Sigma,\sigma)\) the subset of \( \MCM(\Sigma,\sigma)\) consisting of the ergodic measures. The following proposition was proved in \cite{ITV19} (Theorem 8.7). While they state in the theorem that the measures \(\nu_n\) may be taken to be compactly supported, they prove the stronger result stated here. Notice that as \(\log|T' \circ \Pi| \) is not necessarily bounded we cannot also conclude that \(\lim_{n \rightarrow \infty} \lambda_{\nu_n}=\lambda_{\nu} \). However, in Proposition \ref{prop:entropydensity} we extend this result allowing us to assume this also holds.

\begin{prop}\label{prop:entropydensitymike}
    Let \(\nu \in \MCM(\Sigma,\sigma)\), then there exists a sequence of ergodic measures \(\nu_n \in \MCM(\Sigma,\sigma)\) such that \(\nu_n\) converges to \(\nu\) in the weak* topology and  \(\lim_{n \rightarrow \infty} h_{\nu_n} =h_{\nu}\). It is moreover possible to choose the \(\nu_n\) such that they are supported on finitely many symbols, that is, supported on \(\{1,\ldots,k_n\}^{\N}\), respectively, for some sequence \((k_n)_{n \in \N} \subset \N\).
\end{prop}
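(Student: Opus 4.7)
The plan is to combine an inducing (first-return) construction with classical entropy density on a countable-alphabet full shift, then pull back to $\Sigma$ via suspension and finally select an ergodic component within each finite-alphabet subshift. Since $\nu$ is a probability, choose $a \in \N$ with $\nu([a]) > 0$ and let $\sigma_a$ be the first-return map to $[a]$, which after coding by first-return loops at $a$ is conjugate to the full shift on the countable alphabet $R_a$ of such loops. Set $\bar\nu_a := \nu|_{[a]}/\nu([a])$. By Abramov's formula,
\begin{equation*}
h_\nu(\sigma) = \nu([a]) \, h_{\bar\nu_a}(\sigma_a),
\end{equation*}
and by Kac's lemma $\int R \, d\bar\nu_a = 1/\nu([a])$, where $R$ denotes the first-return time.

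\textbf{Finite-alphabet approximation of the induced measure.} For each $k$ let $R_a^{(k)} \subset R_a$ be the finite set of first-return loops using only symbols from $\{1,\ldots,k\}$. On the countable-alphabet full shift $R_a^\N$, entropy density of invariant measures supported on compact subshifts is a classical consequence of Sigmund-type approximation applied inside each finite-alphabet subshift $(R_a^{(k)})^\N$ together with a diagonal argument, valid when $h_{\bar\nu_a} < \infty$. This produces $\sigma_a$-invariant probabilities $\bar\mu_k$ supported on $(R_a^{(k)})^\N$ with $\bar\mu_k \to \bar\nu_a$ weak-* and $h_{\bar\mu_k}(\sigma_a) \to h_{\bar\nu_a}(\sigma_a)$. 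The case $h_\nu = \infty$ is reduced to the finite case by truncating $\bar\nu_a$ along a refining sequence of finite generating partitions so that $h_{\bar\mu_k} \to \infty$.

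\textbf{Suspension and ergodic selection.} Suspending each $\bar\mu_k$ via the return time $R$ yields a $\sigma$-invariant probability $\nu_k$ on $\Sigma$ supported on $\{1,\ldots,k_n\}^\N$ for some $k_n \in \N$. Weak-* continuity of the suspension together with Abramov's formula and the convergence $\int R \, d\bar\mu_k \to \int R \, d\bar\nu_a$ gives $\nu_k \to \nu$ and $h_{\nu_k} \to h_\nu$. Each $\nu_k$ is then supported on a compact finite-alphabet SFT, where the ergodic decomposition $\nu_k = \int \mu \, dP_k(\mu)$ satisfies $h_{\nu_k} = \int h_\mu \, dP_k(\mu)$, so for any fixed finite family of cylinders a mean-value argument selects an ergodic component $\nu_k' \in \MCM_{\MCE}(\Sigma,\sigma)$, supported on the same finite alphabet, whose integrals on this family are close to those of $\nu_k$ and whose entropy is within $1/k$ of $h_{\nu_k}$. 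A standard diagonal in $k$ produces the desired ergodic sequence with finite support.

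\textbf{Main obstacle.} The delicate step is the convergence of the Kac integrals $\int R \, d\bar\mu_k \to 1/\nu([a])$ together with the preservation of entropy under the truncation $R_a \to R_a^{(k)}$. A naive restriction-and-renormalisation of $\bar\nu_a$ to $(R_a^{(k)})^\N$ need not be $\sigma_a$-invariant, and one must rule out the scenario in which orbits that repeatedly traverse long return loops carry a non-trivial share of entropy in the limit. This is precisely where the hypothesis $h_{\text{top}}(\Sigma,\sigma) < \infty$ is essential: it controls the exponential growth rate of long return loops and therefore guarantees that neither mass nor entropy leaks to infinity along the truncation, allowing both the weak-* and entropy convergences to be established uniformly in $k$.
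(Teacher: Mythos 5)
First, a point of reference: the paper does not prove this proposition at all — it is quoted verbatim from Iommi--Todd--Velozo \cite{ITV19} (Theorem 8.7), with a remark that their proof yields the finitely-supported version. So you are supplying an argument where the paper supplies a citation, and your outline does follow the broad strategy of that literature (inducing, finite-alphabet truncation, ergodic approximation). As written, however, it has a fatal step. Your final ``ergodic selection'' claims that, since \(h_{\nu_k}=\int h_\mu \diff P_k(\mu)\), a mean-value argument produces a single ergodic component of \(\nu_k\) whose integrals over a prescribed finite family of cylinders are close to those of \(\nu_k\) and whose entropy is within \(1/k\) of \(h_{\nu_k}\). This is false: the ergodic components of a non-ergodic measure are in general not weak*-close to it. Already for \(\nu_k=\tfrac12(\delta_{\overline{1}}+\delta_{\overline{2}})\) on the full \(2\)-shift, neither component is within \(1/4\) of \(\nu_k\) on the cylinder \([1]\). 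Averaging over components gives the right mean, not a single component near the mean. The correct tool here is Sigmund-type entropy-density of ergodic measures on a \emph{transitive} finite-alphabet SFT (which uses specification to build new ergodic measures shadowing several components in turn), and to invoke it you must also arrange that the finite subshift carrying \(\nu_k\) is transitive — doable using transitivity of \(\Sigma\) by adjoining connecting words, but not automatic from your suspension.

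Two further gaps. (i) The inducing step presumes \(\nu\)-a.e.\ orbit visits \([a]\). The proposition allows arbitrary \(\nu\in\MCM(\Sigma,\sigma)\), and a non-ergodic \(\nu\) may have ergodic components that never enter \([a]\); then Abramov's formula in the form \(h_\nu=\nu([a])\,h_{\bar\nu_a}\) fails, Kac gives \(\int R\diff\bar\nu_a=\nu(\cup_n\sigma^{-n}[a])/\nu([a])\), and suspending \(\bar\nu_a\) recovers only part of \(\nu\). Since there need not be a single symbol charged by every ergodic component, this cannot be repaired merely by choosing \(a\) more carefully. (ii) You correctly isolate the convergence \(\int R\diff\bar\mu_k\to\int R\diff\bar\nu_a\) and the persistence of entropy under the truncation \(R_a\to R_a^{(k)}\) as the main obstacle, but you only assert that finite topological entropy resolves it; since \(R\) is unbounded, weak* convergence gives no control of \(\int R\diff\bar\mu_k\), and this is exactly where the proof in \cite{ITV19} does its real work. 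As it stands, that step is a statement of intent rather than an argument.
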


It is well known that \(\MCM(\Sigma,\sigma)\) is not compact in the weak* topology as mass may be lost in the limit. Accordingly, let \(\MCM_{\leq 1}(\Sigma,\sigma)\) be the set of sub-probability \(\sigma\)-invariant measures. This is defined to be the set of \(\sigma\)-invariant measures such that \(|\nu| \leq 1 \) where \(|\nu|:=\nu(\Sigma)\). Given an enumeration \(C_i\) of the cylinders of \(\Sigma\), we define the metric on \(\MCM_{\leq 1}(\Sigma,\sigma)\) by
\[\rho(\nu, \eta) = \sum_{i=1}^{\infty} \frac{1}{2^n} |\nu(C_i)-\eta(C_i)|\]
The topology induced by this metric is called the \text{topology of convergence on cylinders}. We say a sequence of measures \(\nu_n \in \MCM_{\leq 1}(\Sigma,\sigma)\) \textit{converges to} \(\nu \in \MCM_{\leq 1}(\Sigma,\sigma)\) \textit{on cylinders} if for every cylinder \(C \subset \Sigma\)
\[\lim_{n \rightarrow \infty} \nu_n(C)=\nu(C).\] 
Clearly a sequence \(\nu_n\) converges to a measure \(\nu\) on cylinders if and only if it converges to \(\nu\) in the topology of convergence on cylinders. It was shown in \cite{IV19} (Theorem 1.2) that \(\MCM_{\leq 1}(\Sigma,\sigma)\) endowed with this topology is compact. Moreover, weak* convergence and convergence on cylinders are equivalent when there is no loss of mass (see \cite[Lemma 3.17]{IV19}). We can further characterise this topology in terms of test functions as follows. If \(C\) is a cylinder of length \(m\), denote by 
\[C(\geq n):=\left\{ \omega \in C: \sigma^m(\omega) \in \bigcup_{k \geq n} [k] \right\}. \] 
For a non-empty set \(\Omega \subset \Sigma\) we define 
\[\var^{\Omega}(f):=\sup \{ |f(\omega)-f(\omega')|:\omega,\omega' \in \Omega\}. \]
We say \(f:\Sigma \rightarrow \R\) is in \(C_0 (\Sigma)\) if and only if the following four conditions hold:
\begin{enumerate}
\item f is bounded
\item f is uniformly continuous
\item \(\lim_{n \rightarrow \infty} \sup_{\omega \in [n]} |f(x)|=0\)
\item \(\lim_{n \rightarrow \infty} \var^{C( \geq n)}(f)=0\), for every cylinder \(C \subset \Sigma\). 
\end{enumerate}

\begin{defn}\label{def:C0}
We further define \(C_0(\Lambda)\) to be the subset of functions \(\phi: \Lambda \rightarrow \R \) with variations uniformly tending to 0 such that \(f_{\phi} \in C_0(\Sigma)\), where \(f_{\phi}\) is as defined as in Section \ref{subsec:bd}.
\end{defn}

The following lemma was shown in \cite{IV19} (see Lemma 3.19) and characterises the topology of convergence on cylinders in terms of test functions. We will use this lemma multiple times in the proof of Theorem \ref{theo:uiexactbounded}. 
\begin{lemma}\label{lem:convergenceoncylinderstestfunctions}
Let \(\nu,(\nu_n)_{n \in \N} \subset \MCM_{\leq 1}(\Sigma,\sigma)\), then \(\nu_n\) converges to \(\nu\) in the topology of convergence on cylinders if and only if for every \(f \in C_0(\Sigma)\)
\[\int f \diff \nu_n \rightarrow \int f \diff \nu.\]
\end{lemma}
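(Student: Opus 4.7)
The plan is to prove the two implications separately, with the forward direction being the substantive one. For the backward implication, I would first observe that the indicator $\mathbf{1}_C$ of every cylinder $C \subset \Sigma$ belongs to $C_0(\Sigma)$: it is bounded and $\{0,1\}$-valued; it is uniformly continuous because any two points of $\Sigma$ agreeing up to depth $|C|$ either both lie in $C$ or both lie outside; condition (3) holds since $[n] \cap C$ is empty for all sufficiently large $n$; and condition (4) holds because for any cylinder $C'$ the set $C'(\geq n)$ is eventually either contained in $C$ or disjoint from $C$. Applying the hypothesis to $f = \mathbf{1}_C$ then immediately yields $\nu_n(C) \to \nu(C)$.

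For the forward direction, given $f \in C_0(\Sigma)$ and $\varepsilon > 0$, my strategy is to produce a function $g$ that is a finite $\R$-linear combination of indicators of cylinders with $\|f - g\|_\infty \leq \varepsilon$. Then $\int g \, d\nu_n \to \int g \, d\nu$ by linearity together with the cylinder-convergence hypothesis, while $|\nu_n|, |\nu| \leq 1$ gives $\bigl|\int (f-g)\, d\nu_n \bigr|, \bigl|\int (f-g)\, d\nu \bigr| \leq \varepsilon$; a three-term triangle inequality will then show $\limsup_{n\to\infty} \bigl|\int f \, d\nu_n - \int f \, d\nu \bigr| \leq 2\varepsilon$, and sending $\varepsilon \to 0$ finishes the proof.

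I would build $g$ by a finite recursive decomposition of $\Sigma$. Property (3) furnishes $N_0$ with $|f| < \varepsilon$ on $\bigcup_{i_1 > N_0} [i_1]$, and we set $g \equiv 0$ there. For each surviving $i_1 \leq N_0$, property (4) applied to $[i_1]$ yields $N_1 = N_1(i_1)$ with $\var^{[i_1](\geq N_1)}(f) < \varepsilon$; pick a reference point in $[i_1](\geq N_1)$ and declare $g$ equal to the value of $f$ at that point throughout this ``inner tail'', then iterate the same construction on each admissible cylinder $[i_1, i_2]$ with $i_2 < N_1$. Uniform continuity of $f$ (condition (2)) provides a stopping depth $K$ such that $f$ varies by at most $\varepsilon$ on any length-$K$ cylinder, at which stage $g$ is declared equal to $f$ at a chosen point of each remaining length-$K$ cylinder. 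By construction $\|f - g\|_\infty \leq \varepsilon$, and the total number of pieces in the decomposition is finite.

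The main obstacle is that $g$ must genuinely be a finite linear combination of indicators of cylinders, while each ``inner tail'' $[w](\geq N)$ is itself an infinite union of cylinders. This is resolved by the identity
\begin{equation*}
\mathbf{1}_{[w](\geq N)} \;=\; \mathbf{1}_{[w]} \;-\; \sum_{j < N,\, [w,j]\, \text{admissible}} \mathbf{1}_{[w,j]},
\end{equation*}
which rewrites each inner-tail contribution as a finite $\{-1,0,1\}$-combination of cylinder indicators. Applying this at every level of the recursion exhibits $g$ in the required form and completes the argument.
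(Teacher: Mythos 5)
Your proof is correct. Note, however, that the paper does not prove this lemma at all: it is quoted verbatim from Iommi--Velozo \cite[Lemma 3.19]{IV19}, so there is no in-paper argument to compare against. What you have written is a complete, self-contained proof of the cited result, and it is essentially the standard one. The backward direction is right: $\mathbf{1}_C$ satisfies all four conditions defining $C_0(\Sigma)$ (for condition (4), if $|C'|\geq|C|$ then $\mathbf{1}_C$ is already constant on $C'$, and if $|C'|<|C|$ then $C'(\geq n)$ is disjoint from $C$ once $n$ exceeds the relevant symbol of $C$). The forward direction correctly identifies the two genuine issues and resolves both: the finiteness of the decomposition (condition (3) truncates the first coordinate, condition (4) truncates the branching at every subsequent node, and uniform continuity bounds the depth, so the tree is finitely branching of bounded depth and hence finite), and the fact that the tails $[w](\geq N)$ are not cylinders, which your identity $\mathbf{1}_{[w](\geq N)}=\mathbf{1}_{[w]}-\sum_{j<N}\mathbf{1}_{[w,j]}$ disposes of. The uniform bound $\|f-g\|_\infty\leq\varepsilon$ together with $|\nu_n|,|\nu|\leq 1$ then closes the argument. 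It is worth being explicit that your use of condition (4) is not optional: a bounded, uniformly continuous function vanishing on $[n]$ for large $n$ but oscillating in the second coordinate gives a counterexample to the forward implication, so any proof must invoke (4) exactly where you do.
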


\subsection{Entropy at infinity}
An important quantity of a CMS with finite topological entropy is the \textit{entropy at infinity}. This is a measure of how complex the system is near infinity.  

\begin{defn}[{\cite[Definition 1.2]{ITV19}}]\label{def:entropyatinfinity}
Let \( (\Sigma,\sigma)\) be a CMS. Let \(M,q \in \N\). For \(n \in \N \) let \(z_n(M,q)\) be the
number of cylinders of the form \([\omega_1, . . . , \omega_{n+2}]\), where \(\omega_1 \leq q, \omega_{n+2} \leq q\), and
\[ \# \{ i \in \{1, \ldots , n +2\} : \omega_i \leq q \} \leq \frac{n+2}{M}.\]
Define
\[ \delta_{\infty}(M,q) := \limsup_{n \rightarrow \infty} \frac{1}{n} \log z_n(M,q) \]
and
\[\delta_{\infty}(q):= \lim_{M \rightarrow \infty} \delta_{\infty}(M,q).\]
The topological entropy at infinity of \((\Sigma,\sigma)\) is defined by \(\delta_\infty := \lim_{q \rightarrow \infty} \delta_{\infty} (q)\).
\end{defn}

This too satisfies a variational principle. Theorem 1.4 in \cite{ITV19} says 
\[\delta_{\infty}=h_{\infty}, \]
where 
\[h_{\infty}:= \sup_{(\nu_n)_n \rightarrow 0} \limsup_{n \rightarrow \infty} h_{\nu_n}  \]
is the \textit{metric theoretic entropy at infinity of \((\Sigma,\sigma)\)}. Here \((\nu_n)_n \rightarrow 0\) means that the sequence of measures converges to the zero measure on cylinders and the supremum is over all such sequences \( (\nu_n)_{n \in \N} \subset \MCM(\Sigma, \sigma)\).

The following theorem was proved in \cite{ITV19}. We will use this to prove Proposition \ref{prop:uppersemicontinuity} and in the proof of the upper bound of Theorem \ref{theo:uiexactbounded}. Note that it also gives upper semi-continuity of the entropy map in the weak* topology.

\begin{theo}\label{theo:uppersemicontiuitymike}
Let \((\Sigma,\sigma)\) be a topologically transitive CMS with finite topological entropy. Let \((\nu_n)_{n \in \N}\)  be a sequence of \(\sigma\)-invariant probability measures converging on cylinders to \(\nu \in \MCM_{\leq 1}(\Sigma,\sigma)\). Then 
\[\limsup_{n \rightarrow \infty} h_{\nu_n} \leq |\nu| h_{\nu/|\nu|}+(1-|\nu| )\delta_{\infty}.\]
If the sequence converges on cylinders to the zero measure, then the right hand side is understood as \(\delta_{\infty}\).
\end{theo}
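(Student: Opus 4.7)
The plan is to bound $h_{\nu_n}$ by the normalized Shannon entropy of the natural generating partition $\alpha = \{[i]\}_{i \in \N}$ refined $N$ times, then decompose the sum over admissible $N$-cylinders into a ``bulk'' piece controlled by convergence on cylinders and an ``escape'' piece controlled by the cardinality bound supplied by Definition \ref{def:entropyatinfinity}. Set $a := |\nu|$ and $\alpha_N := \bigvee_{j=0}^{N-1}\sigma^{-j}\alpha$, so that $h_{\nu_n} \leq \tfrac{1}{N}H_{\nu_n}(\alpha_N)$ for each $N$ since $\alpha$ generates. Fix parameters $q, M \in \N$ (to be sent to infinity after $N$), let $A_q = \bigcup_{i \leq q}[i]$, $B_q = \bigcup_{i > q}[i]$, and partition the $N$-cylinders into a bulk class $\MCB = \MCB_{q,M}^N$ consisting of those with at least $(1-1/M)N$ coordinates in $A_q$, and the complementary escape class $\MCE = \MCE_{q,M}^N$. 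Then $H_{\nu_n}(\alpha_N)$ splits as a bulk sum plus an escape sum, each estimated separately.

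For the bulk estimate, convergence on cylinders gives $\nu_n(C) \to \nu(C)$ for every fixed $C \in \MCB$, and Markov's inequality applied to the shift-invariant sequence $\ind_{B_q}\circ\sigma^j$ yields $\nu_n(\MCE) \leq \nu_n(B_q)/(1-1/M)$, so that $b_n := \nu_n(\MCB) \to a$ in the triple limit $n \to \infty$, $q \to \infty$, $M \to \infty$. Via the identity
\[-\!\!\sum_{C \in \MCB}\!\!\nu_n(C)\log\nu_n(C) = -b_n\log b_n - b_n\!\!\sum_{C \in \MCB}\!\!\tfrac{\nu_n(C)}{b_n}\log\tfrac{\nu_n(C)}{b_n},\]
together with continuity of the finite-dimensional Shannon entropy on the pure-bulk sub-class (cylinders with all $N$ coordinates in $A_q$) and the generator property $\tfrac{1}{N}H_{\nu/a}(\alpha_N) \to h_{\nu/a}$, the bulk contribution $\tfrac{1}{N}\sum_{C \in \MCB}(-\nu_n(C)\log\nu_n(C))$ is at most $\tfrac{a}{N}H_{\nu/a}(\alpha_N) + o(1)$ as $n \to \infty$; sending $N \to \infty$ gives a bulk bound of $a h_{\nu/a}$ in the triple limit.

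For the escape estimate, every $C \in \MCE$ visits $A_q$ at most $N/M$ times. By trimming the leading and trailing $B_q$-runs and splicing in short admissible bridging words of length at most some uniform $K(q)$ supplied by topological transitivity of $(\Sigma,\sigma)$, each $C$ corresponds -- at a combinatorial cost of at most $N^2K(q)^2$ choices, i.e.\ $O(\log N)$ additively in log-count -- to a length-$N'$ cylinder with $N' \leq N+O(1)$, anchored in $A_q$ at both endpoints, and visiting $A_q$ at most $N/M + O(1)$ times. These anchored cylinders are precisely those counted by $z_{N'}(M', q)$ in Definition \ref{def:entropyatinfinity} for suitable $M' \leq M$, hence $\tfrac{1}{N}\log|\MCE| \leq \delta_\infty(M,q) + o_N(1)$. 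The concavity bound $-\sum_{C \in \MCE} p_C\log p_C \leq e_n\log|\MCE| - e_n\log e_n$ with $e_n := 1-b_n$, combined with $\limsup_n e_n \leq (1-\nu(A_q))/(1-1/M)$, yields -- upon taking $N \to \infty$, $M \to \infty$, $q \to \infty$ in turn so that $\delta_\infty(M,q) \to \delta_\infty(q) \to \delta_\infty$ -- an escape contribution of at most $(1-a)\delta_\infty$.

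Summing gives $\limsup_{n \to \infty} h_{\nu_n} \leq a h_{\nu/a} + (1-a)\delta_\infty$. The main obstacle is the escape cardinality estimate: Definition \ref{def:entropyatinfinity} counts cylinders anchored in $A_q$ at both ends, whereas a general escape atom is not anchored, so the trimming/bridging step must reduce non-anchored escape cylinders to anchored ones without inflating the log-count by more than $o(N)$. Topological transitivity of $(\Sigma,\sigma)$ is exactly what supplies the necessary admissible bridges of uniformly bounded length in terms of $q$. A secondary subtlety is that $\MCB$ itself contains infinitely many atoms because the $\leq N/M$ escape coordinates within a bulk cylinder may take any value in $B_q$; the entropy contribution of these ``sub-escape'' choices is handled by a nested application of the $\delta_\infty(M,q)$ bound at the sub-cylinder level, which is absorbed into the $o(1)$ term and does not affect the final estimate.
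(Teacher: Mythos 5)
First, a point of comparison: the paper does not actually prove Theorem \ref{theo:uppersemicontiuitymike} --- it is quoted from \cite{ITV19} --- so there is no internal proof to measure your argument against. Judged on its own terms, your outline has the right global shape (a bulk/escape decomposition of an $N$-cylinder partition, with the escape part priced at $\delta_{\infty}(M,q)$ via the counts $z_n(M,q)$ of Definition \ref{def:entropyatinfinity}), but two load-bearing steps fail over a countably infinite alphabet. First, the reduction $h_{\nu_n}\le \tfrac1N H_{\nu_n}(\alpha_N)$ can be vacuous: $\alpha=\{[i]\}_{i\in\N}$ is a countable generator, and an invariant measure on a finite-entropy CMS may have $H_{\nu_n}(\alpha)=-\sum_i \nu_n([i])\log \nu_n([i])=\infty$ while $h_{\nu_n}\le h_{\text{top}}<\infty$; then every $H_{\nu_n}(\alpha_N)$ is infinite and your decomposition is a decomposition of $+\infty$. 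The same infinite-alphabet problem resurfaces inside the bulk class: a bulk cylinder only needs most coordinates in $A_q$, so $\MCB$ has infinitely many atoms, and the entropy carried by the free choice of the $\le N/M$ symbols in $B_q$ is not an $o(1)$ term that a ``nested application'' can absorb --- it is of the same nature as the quantity you are trying to bound.

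Second, and more seriously, the escape cardinality estimate collapses: $\MCE$ is infinite (it contains, for instance, every cylinder all of whose coordinates exceed $q$), so $e_n\log|\MCE|$ is $+\infty$, and the trimming-and-bridging reduction cannot repair this because the trimming map is infinite-to-one --- the discarded prefix and suffix range over arbitrary admissible words in the infinite alphabet $\{q+1,q+2,\ldots\}$, so the log-count is inflated by far more than $o(N)$. What makes the anchored count $z_n(M,q)$ the correct combinatorial object in \cite{ITV19} is the first-return (induced) system on $\bigcup_{i\le q}[i]$: excursions are anchored in $A_q$ at both ends by construction, and Kac's and Abramov's formulas convert $h_{\nu_n}$ into a return-time-normalised excursion entropy in which the escaping mass $1-|\nu|$ appears as the weight of long excursions. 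That inducing step is the core of the actual proof and cannot be replaced by a direct count of unanchored $N$-cylinders. (A smaller slip: Markov's inequality gives $\nu_n(\MCE)\le M\,\nu_n(B_q)$, not $\nu_n(B_q)/(1-1/M)$, so $\nu_n(\MCE)$ is not small in the limit $M\to\infty$ and the claimed convergence $b_n\to|\nu|$ in the triple limit is not established.)
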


\section{Entropy relations via a suspension space}\label{sec:susspace}
For convenience, we let
\[\MCM_{\lambda < \infty}(\Sigma,\sigma):=\left\{ \nu \in \MCM(\Sigma,\sigma): \lambda_{\nu} <\infty \right\}\]
and let \(\MCM_{\MCE,\lambda < \infty}(\Sigma,\sigma)\) be the subset of \(\MCM_{\lambda < \infty}(\Sigma,\sigma)\) consisting of the ergodic measures. Recall that \(f_{\log|T'|}\) is the uniformly continuous function with \(f_{\log|T'|}=\log|T' \circ \Pi|\) everywhere except for some points in \(\Pi^{-1}(E)\). 

In this section we will construct a sequence of suspension spaces on \(\Sigma\) with locally constant roof functions \(\tau_m\) which take integer values and are such that \(\frac{\tau_m}{2^m}\) converges to \(f_{\log|T'|}\) uniformly. We will show that each suspension space is closely related to a countable Markov shift with finite topological entropy, \(\Sigma_{\tau_m}\) say, and further show that there is a correspondence between measures in  \(\MCM_{\lambda<\infty}(\Sigma,\sigma)\) and measures in \(\MCM(\Sigma_{\tau_m},\sigma)\). By Abramov's formula, for each \(\nu \in \MCM_{\lambda<\infty}(\Sigma,\sigma)\) the entropy of the corresponding measure in \(\MCM(\Sigma_{\tau_m},\sigma)\) will be approximately equal to \(h_\nu/\lambda_\nu\), with the limit converging as \(m \rightarrow \infty\). Using this and the results of CMS with finite topological entropy discussed in the previous section, we are able to prove the following two propositions:

\begin{prop}\label{prop:entropydensity}
Let \(\nu \in \MCM_{\lambda<\infty}(\Sigma,\sigma)\). Then there exists a sequence of ergodic measures \(\nu_n \in \MCM_{\lambda<\infty}(\Sigma,\sigma)\) supported on finitely many symbols such that \(\nu_n \rightarrow \nu\) weak*,
\(\lim_{n \rightarrow \infty} h_{\nu_n}=h_{\nu}\) and \(\lim_{n \rightarrow \infty} \lambda_{\nu_n}=\lambda_{\nu} \).
\end{prop}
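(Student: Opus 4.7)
The plan is the one suggested in the paragraph preceding the statement: pass through an Abramov-type correspondence to a suspension CMS of finite topological entropy, apply the existing density result Proposition \ref{prop:entropydensitymike} there, and pull back. The only content beyond Proposition \ref{prop:entropydensitymike} is the separate convergence $\lambda_{\nu_n}\to\lambda_\nu$, which does not follow from weak$^*$ convergence on $\Sigma$ because $f_{\log|T'|}$ is unbounded; moving to the suspension effectively absorbs this unboundedness into the state space.

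For each $m\in\N$, I would construct a positive, integer-valued, locally-constant function $\tau_m:\Sigma\to\N$ with $\|2^{-m}\tau_m-f_{\log|T'|}\|_\infty=O(2^{-m})$; this is possible because $\log|T'|$ has variations uniformly tending to $0$, so $\tau_m$ can be chosen to depend on a finite (depending on $m$) number of coordinates. Form the topologically transitive CMS $(\Sigma_{\tau_m},\sigma_m)$ coding the integer-roof suspension of $(\Sigma,\sigma)$ by $\tau_m$. Abramov's formula provides a bijection $\mu\leftrightarrow\hat\mu$ between $\MCM_{\lambda<\infty}(\Sigma,\sigma)$ and $\MCM(\Sigma_{\tau_m},\sigma_m)$ with
\[h_{\hat\mu}=\frac{h_\mu}{\int\tau_m\,d\mu}\le \frac{h_\mu}{2^m\lambda_\mu}\le 2^{-m}\]
by Lemma \ref{lem:entropylessthanlyapunov}, so the variational principle \eqref{eqn:hvariationalprinciple} yields $h_{\mathrm{top}}(\Sigma_{\tau_m})\le 2^{-m}<\infty$. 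Applying Proposition \ref{prop:entropydensitymike} to $\hat\nu$ on $\Sigma_{\tau_m}$ then produces ergodic $\hat\eta_{m,k}$ supported on finitely many symbols of $\Sigma_{\tau_m}$ with $\hat\eta_{m,k}\to\hat\nu$ weak$^*$ and $h_{\hat\eta_{m,k}}\to h_{\hat\nu}$; the Abramov preimages $\eta_{m,k}$ are ergodic, supported on finitely many symbols of $\Sigma$, and lie in $\MCM_{\MCE,\lambda<\infty}(\Sigma,\sigma)$.

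The crucial observation is that the base $B_m:=\Sigma\times\{0\}\subset\Sigma_{\tau_m}$ is a countable union of depth-$1$ cylinders of $\Sigma_{\tau_m}$ whose complement is also such a union, so $B_m$ is clopen and $1_{B_m}$ is a bounded uniformly continuous test function. Since $\hat\mu(B_m)=1/\!\int\!\tau_m d\mu$, weak$^*$ convergence on $\Sigma_{\tau_m}$ delivers the scalar limit $\int\tau_m d\eta_{m,k}\to\int\tau_m d\nu$ as $k\to\infty$; this is the main obstacle, because $\tau_m$ itself is unbounded on $\Sigma_{\tau_m}$ and cannot be integrated directly against weak$^*$ limits. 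Once this scalar convergence is in hand, everything decouples: Abramov gives $h_{\eta_{m,k}}=h_{\hat\eta_{m,k}}\!\int\!\tau_m d\eta_{m,k}\to h_\nu$, the uniform estimate $|\lambda_\mu-2^{-m}\!\int\!\tau_m d\mu|=O(2^{-m})$ gives $\lambda_{\eta_{m,k}}\to\lambda_\nu$ up to an $O(2^{-m})$ error, and cylinder convergence $\eta_{m,k}(C)\to\nu(C)$ for $C\subset\Sigma$ follows from applying the same clopen argument to the lift $\tilde C=C\times\{0\}\subset\Sigma_{\tau_m}$. A standard diagonal extraction $\nu_n:=\eta_{n,k_n}$ with $k_n$ chosen sufficiently large then yields the desired sequence, with weak$^*$ convergence $\nu_n\to\nu$ on $\Sigma$ following from cylinder convergence together with preservation of total mass (both limit and approximants are probability measures).
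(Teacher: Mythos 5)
Your architecture is the same as the paper's: build the integer-roof suspension CMS $\Sigma_{\tau_m}$, transport $\nu$ via the Abramov/Ambrose--Kakutani correspondence, apply Proposition \ref{prop:entropydensitymike} there, pull back, and diagonalise over $m$ using the uniform estimate $\sup_\mu\left|2^{-m}\int\tau_m\,d\mu-\lambda_\mu\right|\to 0$ to recover $\lambda_{\nu_n}\to\lambda_\nu$. The one genuinely different sub-step is how you obtain $\int\tau_m\,d\eta_{m,k}\to\int\tau_m\,d\nu$: the paper cites Lemma 5.1 of \cite{ITV18} (together with continuity of $\widetilde{\pi}^{-1}$), whereas you observe that the base $B_m$ is a countable union of $1$-cylinders, hence clopen, so $\mathbbm{1}_{B_m}$ is a legitimate bounded continuous test function and $\hat\mu(B_m)=1/\int\tau_m\,d\mu$ converts weak* convergence on $\Sigma_{\tau_m}$ into the desired scalar limit; the same clopen-lift trick gives cylinder convergence on $\Sigma$, which is the content of the paper's Lemma \ref{lem:cylindersgotocylinders}. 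This is a clean, self-contained substitute for the external citation and I consider it correct.

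One displayed inequality needs care: you write $h_{\hat\mu}=h_\mu/\int\tau_m\,d\mu\le h_\mu/(2^m\lambda_\mu)$, which requires $\int\tau_m\,d\mu\ge 2^m\lambda_\mu$, i.e.\ that $2^{-m}\tau_m$ is an \emph{over}-approximation of $f_{\log|T'|}$. The paper's $\tau_m$ is the floor-type (under-)approximation, for which this inequality reverses. Either fix the convention to a ceiling-type approximation, or argue as the paper does (Lemma \ref{lem:finitetopentropy}): $h_\mu\le\lambda_\mu\le 2^{-m}\int\tau_m\,d\mu+\varepsilon_m$ with $\int\tau_m\,d\mu\ge 1$, so $h_{\hat\mu}\le 2^{-m}+\varepsilon_m$. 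The conclusion that $h_{\mathrm{top}}(\Sigma_{\tau_m})<\infty$ survives either way, so this is a local slip rather than a gap. Also note that your $O(2^{-m})$ uniform error for $2^{-m}\tau_m-f_{\log|T'|}$ silently requires the cylinder depth $d(m)$ to satisfy $\var_{d(m)}(f_{\log|T'|})\le 2^{-m}$; you allow the depth to depend on $m$, so this is consistent, but it should be said explicitly since only $\var_n\to 0$ is assumed.
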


\begin{prop}\label{prop:uppersemicontinuity}
Suppose \(\inf\{|T'(x)|:x \in \overline{I_i}\} \rightarrow \infty\) as \(i \rightarrow \infty\). Let \(\nu_n \in \MCM_{\lambda<\infty}(\Sigma,\sigma)\) be such that \(\frac{h_{\nu_n}}{\lambda_{\nu_n}}>s_{\infty}+\delta\) for some \(\delta>0\) and all \(n \in \N\). Then there exists \(\nu \in \MCM_{\lambda<\infty}(\Sigma,\sigma)\) and a subsequence \((n_k)_{k \in \N}\) such that \(\nu_{n_k} \rightarrow \nu\) weak* and 
\[\limsup_{n \rightarrow \infty} \frac{h_{\nu_n}}{\lambda_{\nu_n}} \leq \frac{h_{\nu}}{\lambda_{\nu}}.\]
\end{prop}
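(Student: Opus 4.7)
The plan is to transport the problem to a suspension construction in which the ratio $h_\nu/\lambda_\nu$ becomes (up to a factor of $2^m$) an ordinary entropy on a topologically transitive countable Markov shift with finite topological entropy, and then apply Theorem \ref{theo:uppersemicontiuitymike}. Writing $\Sigma_{\tau_m}$ for the suspension countable Markov shift built from the integer roof $\tau_m$ described at the start of this section, Abramov's formula yields $h_{\tilde\nu^{(m)}} = h_\nu / \int \tau_m \diff \nu$ with $\int \tau_m \diff \nu = 2^m \lambda_\nu + O(1)$, so $2^m h_{\tilde\nu^{(m)}} \to h_\nu/\lambda_\nu$ as $m\to\infty$ uniformly over measures with $\lambda_\nu$ bounded below (and $|T'|\geq \zeta > 1$ guarantees this uniform lower bound).

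First I would invoke compactness of $\MCM_{\leq 1}(\Sigma,\sigma)$ in the topology of convergence on cylinders (\cite[Theorem 1.2]{IV19}) to extract a subsequence $\nu_{n_k}$ converging on cylinders to some $\nu \in \MCM_{\leq 1}(\Sigma,\sigma)$. For each fixed $m$ the suspension correspondence produces $\tilde\nu_{n_k}^{(m)} \in \MCM(\Sigma_{\tau_m},\sigma)$, and after a diagonal extraction $\tilde\nu_{n_k}^{(m)}$ converges on cylinders to some $\tilde\nu^{(m)} \in \MCM_{\leq 1}(\Sigma_{\tau_m},\sigma)$ for every $m$. Applying Theorem \ref{theo:uppersemicontiuitymike} on each $\Sigma_{\tau_m}$ gives
\[
\limsup_{k\to\infty} h_{\tilde\nu_{n_k}^{(m)}} \;\leq\; |\tilde\nu^{(m)}|\, h_{\tilde\nu^{(m)}/|\tilde\nu^{(m)}|} \;+\; (1-|\tilde\nu^{(m)}|)\, \delta_\infty(\Sigma_{\tau_m}).
\]
The crux is then to verify that $2^m \delta_\infty(\Sigma_{\tau_m}) \to s_\infty$ as $m \to \infty$. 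This should follow by combining the variational characterisation $\delta_\infty = h_\infty$ of \cite[Theorem 1.4]{ITV19} applied on $\Sigma_{\tau_m}$ with the definition of $s_\infty$ as the infimum of $t$ making $P(-t\log|T'|)$ finite, using the hypothesis $\inf\{|T'(x)|:x\in\overline{I_i}\}\to\infty$ so that any sequence of measures losing mass to infinity in $\Sigma$ has $\lambda$ diverging and hence mass loss in the suspension is governed precisely by the pressure threshold $s_\infty$.

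With that identification, the hypothesis $h_{\nu_n}/\lambda_{\nu_n} > s_\infty + \delta$ rewrites (for $m$ large) as $h_{\tilde\nu_{n_k}^{(m)}} > \delta_\infty(\Sigma_{\tau_m}) + \delta/2^{m+1}$, a strict gap above the entropy at infinity that forces $|\tilde\nu^{(m)}|$ to remain bounded below uniformly in $k$. Taking $m\to\infty$ then forces $|\nu|=1$ in $\Sigma$ and $\lambda_\nu<\infty$, so $\nu_{n_k}\to\nu$ weak*, and the $\tilde\nu^{(m)}$ are themselves probability measures on $\Sigma_{\tau_m}$ arising from $\nu$ via Abramov. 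The right-hand side of the displayed bound then collapses to $h_\nu / \int \tau_m \diff \nu$; multiplying through by $2^m$ and letting $m\to\infty$ converts this into the desired $\limsup_k h_{\nu_{n_k}}/\lambda_{\nu_{n_k}} \leq h_\nu/\lambda_\nu$. The main technical obstacle I anticipate is making the asymptotic $2^m\delta_\infty(\Sigma_{\tau_m}) \to s_\infty$ quantitatively precise and interchanging the limits in $k$ and $m$ cleanly enough that the strict gap $\delta$ survives both passages.
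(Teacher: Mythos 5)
Your overall strategy coincides with the paper's: pass to the suspension shifts \(\Sigma_{\tau_m}\), use Abramov's formula to turn \(h_\nu/\lambda_\nu\) into \(2^m\) times an entropy on a finite-entropy CMS, apply Theorem \ref{theo:uppersemicontiuitymike} there, and feed in the asymptotic \(2^m\delta_\infty(\Sigma_{\tau_m},\sigma)\to s_\infty\) (which the paper isolates as Lemma \ref{lem:sinfinitydeltainfinity}). The gap is in the sentence claiming that taking \(m\to\infty\) ``forces \(|\nu|=1\) \ldots and the \(\tilde\nu^{(m)}\) are themselves probability measures on \(\Sigma_{\tau_m}\).'' The strict entropy gap above \(\delta_\infty(\Sigma_{\tau_m},\sigma)\) only forces \(|\tilde\nu^{(m)}|>0\), not \(|\tilde\nu^{(m)}|=1\): mass can escape upstairs even when none escapes downstairs, because a vanishing amount of \(\nu_n\)-mass sitting on cylinders where \(\log|T'|\) is of order \(1/\varepsilon_n\) contributes a non-vanishing fraction of \(\int\tau_m\diff\nu_n\), hence of \(M_m\nu_n\). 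This is exactly the obstruction the paper flags in the heuristic discussion preceding Lemma \ref{lem:cylindersgobacktocylinders} (the example \(\rho_n=(1-\varepsilon_n)\nu_n+\varepsilon_n\eta_n\)), and it is consistent with the hypothesis \(h_{\nu_n}/\lambda_{\nu_n}>s_\infty+\delta\), so you cannot simply collapse the right-hand side of Theorem \ref{theo:uppersemicontiuitymike} to \(h_\nu/\int\tau_m\diff\nu\).

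The repair, which is where the paper's proof does its real work, has two ingredients you omit. First, one must identify the \emph{normalised} limit \(\eta_m/|\eta_m|\) with the downstairs limit \(\nu\) in the presence of mass loss; the paper does this by matching cylinders of \(\Sigma\) and \(\Sigma_{\tau_m}\) (Lemmas \ref{lem:cylindersgobacktocylinders} and \ref{lem:cylindersgotocylinders}) together with Lemma \ref{lem:mlimitoflyapunov}, which yields \(\lim_n\int\tau_m\diff\nu_n=\tfrac{1}{|\eta_m|}\int\tau_m\diff\nu\) --- the factor \(1/|\eta_m|\) is precisely the escaping Lyapunov mass. Second, one keeps the defective term and argues by convexity: from
\[ s_\infty+\delta\;\leq\; 2^m\Bigl(|\eta_m|\,\tfrac{h_{\nu}}{\int\tau_m\diff\nu}+(1-|\eta_m|)\,\delta_\infty(\Sigma_{\tau_m},\sigma)\Bigr)\quad\text{and}\quad 2^m\delta_\infty(\Sigma_{\tau_m},\sigma)\leq s_\infty+\varepsilon(m)<s_\infty+\delta, \]
one deduces \(2^m h_\nu/\int\tau_m\diff\nu\geq 2^m\liminf_n h_{\nu_n}/\int\tau_m\diff\nu_n\) without ever knowing \(|\eta_m|=1\), and then lets \(m\to\infty\) by monotone convergence. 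As written, your argument supplies neither ingredient and so does not close.
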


As well the constructed countable Markov shifts and Abramov's formula, the key elements in the proof of Proposition \ref{prop:entropydensity} are Lemma 5.1 in \cite{ITV18} and Proposition \ref{prop:entropydensitymike}. Once we have proved Proposition \ref{prop:entropydensity}, we will discuss some heuristics for the proof of Proposition \ref{prop:uppersemicontinuity}.

We now define the suspension spaces on \(\Sigma\) and prove several lemmas. For \(m \in \N\) let
\[\tau_{m}:=\sum_{(\omega_1, \ldots ,\omega_m) \in \MCC_m(\Sigma)} 1_{(\omega_1, \ldots ,\omega_m)}  k_{(\omega_1,\ldots, \omega_m)}, \]
where 
\[k_{(\omega_1,\ldots,\omega_m)}=\max \left\{k \in \N \cup \{0\}: \frac{k}{2^m} \leq \inf \{f_{\log|T'|}(\omega)|: \omega \in [\omega_1,\ldots,\omega_m]\} \right\}.\]
Then \(\frac{\tau_m}{2^m} \nearrow f_{\log|T'|}\) and each \(\tau_{m}\) is uniformly continuous, takes integer values on \(\Sigma\), and is strictly positive if \(m\) is sufficiently large. We may assume that \(\tau_m\) is strictly positive for all \(m \in \N\) (otherwise replace \(2^m\) with \(l^m\) where \(l \in \N\) is such that \(1/l<\log \zeta\)). Since \(f_{\log|T'|}\) is uniformly continuous and by (\ref{eqn:integralequality}), we have that 
\begin{equation}\label{eqn:uniformcont}
    \lim_{m \rightarrow \infty} \sup_{\nu \in \MCM(\Sigma,\sigma)} \left|\int \frac{\tau_m}{2^m} \diff \nu - \lambda_{\nu} \right|= 0.
\end{equation}

For now, fix an \(m \in \N\). We define the suspension space \(X\) by 
\[X:=\{ (\omega,x):\omega \in \Sigma, 0 \leq x \leq \tau_m(\omega)\},\]
where we identify the points \((\omega,\tau_m(\omega))=(\sigma(\omega),0)\) for all \(\omega \in \Sigma\). Let \(\varphi_{t}\) be the map given by \(\varphi_{t}(\omega,x)=(\omega,x+t)\). Then \((X, \varphi_t)\) defines a semi-flow. We will mainly consider the map \(\varphi_1\) because, due to the roof function taking integer values, \((X,\varphi_1)\) is closely related to a CMS. We endow \(X\) with the Bowen-Walters metric described in \cite[Section 2.3]{BI06}. We can define a map \( M:\MCM_{\lambda<\infty}(\Sigma,\sigma) \rightarrow \MCM(X,\varphi_{t})\)
by
\begin{equation}\label{eqn:susspacemeasuremap}
    M \nu =\frac{(\nu \times \Leb)|_X}{\int \tau_m \diff \nu},
\end{equation}
where \(\Leb\) is the Lebesgue measure on the real line \(\R\) and \((\nu \times \Leb)|_X\) stands for the restriction of \(\nu \times \Leb\) to \(X\). When we want to be explicit about the dependence of \(M\) on \(m\), we will write \(M_m\). Work by  Ambrose and Kakutani implies that \(M\) is a bijection. It is easy to see that \(M \nu \) is ergodic if and only if \(\nu\) is ergodic.

Let \(\nu \in \MCM_{\lambda< \infty}(\Sigma,\sigma)\) and let \(A \subset X\) be the set \(A=\Sigma \times [0,1)\). Then \(M \nu (A)=1/\int \tau_m \diff \nu \). Furthermore \(A\) is spanning with respect to the map \(\varphi_{1}\), so by Abramov's formula 
\begin{align}
    h_{M \nu }(X,\varphi_{1})&= M \nu (A) h_{M \nu |_A}(A,\varphi_{1}|_A) \nonumber \\
    &= \frac{h_{M \nu |_A}(A,\varphi_{1}|_A)}{\int \tau_m \diff \nu}, \label{eqn:arbamoventropy}
\end{align}
where \(M \nu |_A\) is the measure on the induced system \((A,\MCB(X)|_A, M \nu |_A, \varphi_{1}|_A)\).

\begin{lemma}\label{lem:iso1}
For \(\nu \in \MCM_{\lambda<\infty}(\Sigma,\sigma)\), \(h_{M \nu |_A}(A,\varphi_{1}|_A)=h_{\nu}(\Sigma,\sigma)\).
\end{lemma}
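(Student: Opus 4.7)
The plan is to identify the induced system $(A, \varphi_1|_A, M\nu|_A)$ with the direct product of $(\Sigma, \sigma, \nu)$ and the trivial system $([0,1), \id, \Leb|_{[0,1)})$, and then to apply the product formula for measure-theoretic entropy.

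First I would compute the first return map. For $(\omega, x) \in A$ one has $\varphi_t(\omega, x) = (\omega, x+t)$ inside the fibre $\{\omega\} \times [0, \tau_m(\omega))$, with the identification $(\omega, \tau_m(\omega)) \sim (\sigma \omega, 0)$. Since $\tau_m$ is strictly positive and integer-valued and $x \in [0,1)$, exactly $\tau_m(\omega)$ applications of $\varphi_1$ are needed to return to $A$, and at that moment we land at $(\sigma \omega, x)$. Hence the first return map to $A$ under $\varphi_1$ is given by $(\omega, x) \mapsto (\sigma \omega, x)$.

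Next I would compute the induced measure. Since $M\nu(A) = 1/\int \tau_m \diff \nu$, the definition in (\ref{eqn:susspacemeasuremap}) yields, for every Borel set $B \subseteq A$,
\[M\nu|_A(B) = \frac{M\nu(B)}{M\nu(A)} = (\nu \times \Leb)(B),\]
so $M\nu|_A = \nu \times \Leb|_{[0,1)}$.

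Combining these two observations, the induced system $(A, \varphi_1|_A, M\nu|_A)$ is, up to the obvious identification, the direct product measure-preserving system $(\Sigma \times [0,1), \sigma \times \id, \nu \times \Leb|_{[0,1)})$. I would then invoke the product formula for measure-theoretic entropy (\cite[Chapter 4]{Wal81}) together with the fact that the identity map on $([0,1), \Leb|_{[0,1)})$ has zero measure-theoretic entropy, giving
\[h_{M\nu|_A}(A, \varphi_1|_A) = h_\nu(\Sigma, \sigma) + h_{\Leb|_{[0,1)}}([0,1), \id) = h_\nu(\Sigma, \sigma),\]
as required. The only step that requires any care is the bookkeeping around the identifications on $X$ and the verification that the first return time equals $\tau_m(\omega)$ at every point of $A$; this is immediate however, since $\tau_m$ is strictly positive and takes integer values on each cylinder of length $m$.
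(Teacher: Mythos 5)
Your proposal is correct and is essentially the paper's own argument: the paper likewise identifies the induced system $(A,\MCB(X)|_A, M\nu|_A,\varphi_1|_A)$ with $(\Sigma\times[0,1),\,\sigma\times\id,\,\nu\times\Leb)$ and applies the product entropy formula (Theorem 4.23 in \cite{Wal81}). You merely spell out the first-return-time and induced-measure computations that the paper leaves as ``notice that''.
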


\begin{proof}
Notice that \((A,\MCB(X)|_A, M \nu|_A, \varphi_{1}|_A)\) is isomorphic to \((\Sigma \times [0,1), \MCB(\Sigma) \otimes \MCB([0,1)), \nu \times \text{Leb}, \sigma \times \id)\). So by Theorem 4.23 in \cite{Wal81}
\begin{align*}
    h_{M\nu|_A}(A,\varphi_{1}|_A) &=h_{\nu}(\Sigma,\sigma)+h_{\Leb}([0,1),\id) \\
    &= h_{\nu}(\Sigma,\sigma).
\end{align*}
\end{proof}

We now define an map from \(X\) onto a topologically transitive CMS, \(\Sigma_{\tau_m}\) say, with finite topological entropy. This will be related to \(\Sigma\) in the following simple way. First note that the CMS \(\Sigma\) is isomorphic to a CMS, \(\Sigma_{m}\) say, on the alphabet \(\MCC_m(\Sigma)\) where \((\omega_1,\ldots,\omega_m) \rightarrow (\omega'_1,\ldots,\omega'_m) \) if and only if \(\omega_{l+1}=\omega'_l\) for all \(1 \leq l \leq m-1\). The CMS \(\Sigma_{\tau_m}\) is then formed from \(\Sigma_m\) by replacing each vertex \((\omega_1,\ldots,\omega_m)\) by a string of \(k_{(\omega_1,\ldots,\omega_m)}\) vertices.

Let \(q\) be a bijection of \(\MCC_m(\Sigma)\) onto \(\N\). Consider the map \(\xi:X \rightarrow \N\) given by \( \xi(\omega,x)=1+ \floor{x}+\sum_{j=1}^{q(\omega_1,\ldots,\omega_m)-1} k_{q^{-1}(j)}\). Notice that for \(a \in \N\), \(\xi^{-1}(a)=[\omega_1,\ldots ,\omega_m] \times [j-1,j)\) for some \((\omega_1,\ldots,\omega_m) \in \MCC_m(\Sigma)\) and \(j \in \{1,\ldots,k_{(\omega_1,\ldots,\omega_m)}\}\). Now, define the map \(\pi: X \rightarrow \N^\N\) by \( (\pi(\omega,x))_j=\xi(\varphi_1^{j-1}(\omega,x)) \). Then \((\pi(X),\sigma)\) is the CMS with transition matrix \((t_{ab})\), where \(t_{ab}=1\) if \(a,b \in (\sum_{i=1}^{j} k_{q^{-1}(i)}, \sum_{i=1}^{j+1} k_{q^{-1}(i)}] \cap \N\) for some \(j \in \N \cup \{0\}\) and \(b=a+1\); or if \(a=\sum_{i=1}^j k_{q^{-1}(i)}\) for some \(j \in \N\), \(b=1+\sum_{i=1}^{l-1} k_{q^{-1}(i)}\) for some \(l \in \N\) and \(q^{-1}(j) \xrightarrow[\Sigma_m]{} q^{-1}(l)\); and zero otherwise. That is, \((\pi(X),\sigma)\) is the CMS \(\Sigma_{\tau_m}\) described above. For later use we also define the map \(\widetilde{\pi}: X \rightarrow \Sigma_{\tau_m} \times [0,1)\) by \(\widetilde{\pi}(\omega,x)=(\pi(\omega,x),\{x\}) \), where \(\{x\}\) is the fractional part of \(x\). It is easy to see that \(\widetilde{\pi}\) is invertible and its inverse is continuous. 

\begin{lemma}\label{lem:iso2}
For \(\nu \in \MCM(X,\varphi_{1})\), \(h_{\nu}(X,\varphi_{1})=h_{\pi^* \nu}(\Sigma_{\tau_m},\sigma)\). 
\end{lemma}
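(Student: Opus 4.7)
The strategy is to exploit $\widetilde{\pi}$ as a measurable isomorphism to transfer the entropy computation from $(X,\varphi_1)$ to the product system $(\Sigma_{\tau_m} \times [0,1), \sigma \times \id)$, and then to observe that the identity action on the second factor contributes no entropy, so the entropy reduces to that of $\sigma$ on $(\Sigma_{\tau_m}, \pi^*\nu)$.

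First I would verify the conjugacy $\widetilde{\pi} \circ \varphi_1 = (\sigma \times \id) \circ \widetilde{\pi}$ by direct computation on $X$: the time-one map $\varphi_1$ increases $x$ by $1$ (with wraparound via $\sigma$ when $x+1 \geq \tau_m(\omega)$), and by construction of $\xi$ this advances $\pi(\omega,x)$ by one shift while leaving $\{x\}$ fixed. Since $\widetilde{\pi}$ is a bijection whose inverse is continuous (hence measurable), it is a measurable isomorphism between $(X,\varphi_1,\nu)$ and $(\Sigma_{\tau_m}\times [0,1), \sigma \times \id, \widetilde{\pi}^*\nu)$, so
\[ h_\nu(X,\varphi_1) = h_{\widetilde{\pi}^*\nu}(\Sigma_{\tau_m}\times [0,1), \sigma \times \id). \]

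Next I would classify the ergodic measures of $\sigma \times \id$. Since the projection $(\omega,y)\mapsto y$ is $(\sigma \times \id)$-invariant, ergodicity forces it to be almost surely constant, so any $(\sigma \times \id)$-ergodic probability measure has the form $\nu_e \times \delta_{t}$ with $\nu_e$ a $\sigma$-ergodic measure on $\Sigma_{\tau_m}$ and $t \in [0,1)$. On the support $\Sigma_{\tau_m} \times \{t\}$ the dynamics is a measure-theoretic copy of $\sigma$ on $(\Sigma_{\tau_m},\nu_e)$, so $h_{\nu_e \times \delta_t}(\sigma \times \id) = h_{\nu_e}(\sigma)$. Writing the ergodic decomposition $\widetilde{\pi}^*\nu = \int (\nu_e \times \delta_{t(e)}) \diff P(e)$ and invoking affinity of the entropy map gives
\[ h_{\widetilde{\pi}^*\nu}(\sigma\times\id) = \int h_{\nu_e}(\sigma) \diff P(e) = h_{\int \nu_e \diff P(e)}(\sigma) = h_{p_1^*\widetilde{\pi}^*\nu}(\sigma) = h_{\pi^*\nu}(\sigma), \]
where $p_1$ is projection onto the first coordinate and $p_1 \circ \widetilde{\pi} = \pi$.

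The main technical point is the classification of ergodic measures and the use of affinity for the (possibly non-ergodic, even non-product) measure $\widetilde{\pi}^*\nu$; both are standard consequences of Jacobs' theorem, so once the isomorphism statement is verified, the remainder is bookkeeping. A small subtlety to be careful about is the identification of $(\omega,\tau_m(\omega))$ with $(\sigma\omega, 0)$ in $X$, which must be handled in the conjugacy check, but this causes only a null-set issue that does not affect entropy.
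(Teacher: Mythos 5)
Your proof is correct, and it genuinely diverges from the paper's after the first step. Both arguments begin by using \(\widetilde{\pi}\) to conjugate \((X,\varphi_1,\nu)\) to \(\sigma\times\id\) acting on \(\Sigma_{\tau_m}\times[0,1)\). The paper then asserts that the resulting system is isomorphic to the \emph{product} system \((\Sigma_{\tau_m}\times[0,1),\;\pi^*\nu\times\Leb,\;\sigma\times\id)\) and applies the product formula for entropy (Theorem 4.23 of \cite{Wal81}), so that the Lebesgue--identity factor contributes zero. You instead keep the measure \(\widetilde{\pi}^*\nu\) as it is, classify the ergodic measures of \(\sigma\times\id\) as \(\nu_e\times\delta_t\), and use Jacobs' theorem together with affinity of the entropy map to collapse the second coordinate. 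Your route is slightly longer but strictly more robust: the identity \(\widetilde{\pi}^*\nu=\pi^*\nu\times\Leb\) used implicitly by the paper does hold for the flow-invariant measures \(M\nu\) to which the lemma is actually applied (since \(\tau_m\) is integer-valued, the Lebesgue factor in the flow direction pushes forward to \(\Leb_{[0,1)}\) under \(x\mapsto\{x\}\)), but it fails for a general \(\varphi_1\)-invariant measure such as \((\widetilde{\pi}^{-1})^*(\eta\times\delta_0)\); your ergodic-decomposition argument proves the entropy equality for every \(\nu\in\MCM(X,\varphi_{1})\), which is the generality in which the lemma is stated. The only points worth making explicit in a write-up are that Jacobs' theorem and affinity of entropy remain valid in this non-compact setting (the paper relies on this elsewhere) and the conjugacy check at the identified points \((\omega,\tau_m(\omega))=(\sigma\omega,0)\), which, as you note, is a null-set issue that does not affect entropy.
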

\begin{proof}
It is routine to check that \((X,\MCB(X),\varphi_{1},\nu)\) is isomorphic to \((\pi(X) \times [0,1), \MCB(\Sigma_{\tau_m})\otimes \MCB([0,1)),\pi^* \nu \times \text{Leb}, \sigma \times \id)\). Then, again by Theorem 4.23 in \cite{Wal81},
\begin{align*}
    h_{\nu}(X,T)&=h_{\pi^* \nu \times \text{Leb}}(\Sigma_{\tau_m} \times [0,1),\sigma \times \id) \\
    &=h_{\pi^*\nu}(\Sigma_{\tau_m},\sigma)+h_{\text{Leb}}([0,1),\id) \\
    &= h_{\pi^*\nu}(\Sigma_{\tau_m},\sigma).
\end{align*}
\end{proof}

Lemma \ref{lem:iso1} and Lemma \ref{lem:iso2} together with equation (\ref{eqn:arbamoventropy}) gives 
\begin{equation}\label{eqn:susspaceentropyrelation}
    h_{\pi^* M \nu}(\Sigma_{\tau_m},\sigma)=\frac{h_{\nu}(\Sigma,\sigma)}{\int \tau_m \diff \nu}
\end{equation}
for all \(\nu \in \MCM_{\lambda< \infty}(\Sigma,\sigma)\). We will use (\ref{eqn:susspaceentropyrelation}) numerous times in what follows.

\begin{lemma}\label{lem:measuremapsurjective}
The map \(\pi^* \circ M : \MCM_{\MCE,\lambda<\infty}(\Sigma,\sigma) \rightarrow \MCM_{\MCE}(\Sigma_{\tau_m},\sigma)\) is surjective. 
\end{lemma}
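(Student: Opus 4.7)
The plan is to invert the two maps $\widetilde{\pi}$ and $M$ in sequence. Given $\eta \in \MCM_{\MCE}(\Sigma_{\tau_m},\sigma)$, I would form the product measure $\eta \times \Leb$ on $\Sigma_{\tau_m} \times [0,1)$ and define $\tilde{\eta} := \widetilde{\pi}^{-1}_{\!*}(\eta \times \Leb)$, which is a probability measure on $X$. A direct calculation from the definition of $\xi$ and $\pi$ shows that $\widetilde{\pi}$ conjugates the semi-flow $\varphi_t$ on $X$ to the map $(\omega_{\tau_m},y) \mapsto (\sigma^{\floor{y+t}}\omega_{\tau_m},\{y+t\})$ on $\Sigma_{\tau_m} \times [0,1)$; this is precisely the constant-roof-$1$ suspension flow over $(\Sigma_{\tau_m},\sigma)$. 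A short Fubini computation using $\sigma$-invariance of $\eta$ and translation-invariance of $\Leb$ shows that $\eta \times \Leb$ is invariant under this flow, so $\tilde{\eta}$ is $\varphi_t$-invariant for every $t \geq 0$.

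Next I would verify that $\tilde{\eta}$ is ergodic as a flow-invariant measure. After transport via $\widetilde{\pi}$, any flow-invariant set in $X$ becomes a subset $B \subset \Sigma_{\tau_m} \times [0,1)$ that is invariant under translations within each fiber $\{\omega_{\tau_m}\} \times [0,1)$; hence $B = B_0 \times [0,1)$ up to a $(\eta \times \Leb)$-null set for some $B_0 \subset \Sigma_{\tau_m}$. The wrap-around identification then forces $B_0$ to be $\sigma$-invariant, and ergodicity of $\eta$ gives $\eta(B_0) \in \{0,1\}$.

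By the Ambrose–Kakutani bijection $M: \MCM_{\lambda<\infty}(\Sigma,\sigma) \to \MCM(X,\varphi_t)$ cited in the text, there is a unique $\nu \in \MCM_{\lambda<\infty}(\Sigma,\sigma)$ with $M\nu = \tilde{\eta}$, and the noted fact that $M$ preserves ergodicity places $\nu$ in $\MCM_{\MCE,\lambda<\infty}(\Sigma,\sigma)$. Finally, since $\pi = p \circ \widetilde{\pi}$, where $p: \Sigma_{\tau_m} \times [0,1) \to \Sigma_{\tau_m}$ is the first-coordinate projection,
\[\pi^* M\nu \;=\; \pi^*\tilde{\eta} \;=\; p_*(\eta \times \Leb) \;=\; \eta,\]
as required.

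The main obstacle I foresee is the ergodicity step: one must argue ergodicity for the flow $\varphi_t$ rather than for the time-one map $\varphi_1$. Under $\widetilde{\pi}$ the map $\varphi_1$ corresponds to $\sigma \times \id$ on $\Sigma_{\tau_m} \times [0,1)$, and $\eta \times \Leb$ is \emph{never} ergodic for $\sigma \times \id$ because every horizontal slice $\Sigma_{\tau_m} \times \{y\}$ is invariant. The rescue is that the ergodicity statement for $M$ quoted in the text is for the flow, and the fiber-translation argument above really does give ergodicity in that continuous-time sense.
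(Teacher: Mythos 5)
Your proposal is correct and follows essentially the same route as the paper: push $\eta \times \Leb$ back through $\widetilde{\pi}^{-1}$ to get a flow-invariant ergodic measure on $X$, apply $M^{-1}$, and observe that $\pi^* M\nu = \eta$. The paper simply asserts flow-ergodicity of $(\widetilde{\pi}^{-1})^*(\eta \times \Leb)$ without proof, whereas you supply the standard fiber-translation argument and correctly flag that ergodicity must be taken with respect to the flow rather than the time-one map $\varphi_1$.
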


\begin{proof}
Let \(\eta \in \MCM_{\MCE}(\Sigma_{\tau_m},\sigma)\). The measure \((\widetilde{\pi}^{-1})^* (\eta \times \Leb_{[0,1)}) \in \MCM(X,\varphi_t)\) is ergodic with respect to the flow \(\varphi_t\), hence the measure \(\nu:=M^{-1} (\widetilde{\pi}^{-1})^* (\eta \times \Leb_{[0,1)}) \in \MCM_{\lambda<\infty}(\Sigma,\sigma)\) is also ergodic. It is clear that \(\pi^* M \nu=\eta\).
\end{proof}

\begin{lemma}\label{lem:finitetopentropy}
    The topological entropy \(h_\text{top}(\Sigma_{\tau_m},\sigma)\) is finite. 
\end{lemma}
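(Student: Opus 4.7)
The plan is to apply the variational principle for topological entropy to reduce $h_{\text{top}}(\Sigma_{\tau_m},\sigma)$ to a supremum over invariant measures of $\Sigma$, and then control that supremum using the Abramov-type identity (\ref{eqn:susspaceentropyrelation}) and the Lyapunov--entropy inequality already proved in Lemma \ref{lem:entropylessthanlyapunov}.

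First I would invoke the variational principle in (\ref{eqn:hvariationalprinciple}), applied to the topologically transitive CMS $\Sigma_{\tau_m}$, to write
\[
h_{\text{top}}(\Sigma_{\tau_m},\sigma) = \sup\{h_\eta : \eta \in \MCM_{\MCE}(\Sigma_{\tau_m},\sigma)\}.
\]
By Lemma \ref{lem:measuremapsurjective}, every such $\eta$ is of the form $\pi^{*} M\nu$ for some $\nu \in \MCM_{\MCE,\lambda<\infty}(\Sigma,\sigma)$, and (\ref{eqn:susspaceentropyrelation}) identifies the entropy of $\eta$ as $h_\nu/\int \tau_m \diff\nu$. Lemma \ref{lem:entropylessthanlyapunov} then bounds $h_\nu \leq \lambda_\nu$, reducing the problem to showing that $\sup_\nu \lambda_\nu/\int \tau_m \diff\nu$ is finite.

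The key estimate comes directly from the construction of $\tau_m$: since $k_{(\omega_1,\ldots,\omega_m)} \geq 2^m \inf_{[\omega_1,\ldots,\omega_m]} f_{\log|T'|} - 1$ and $f_{\log|T'|}$ has variations uniformly tending to $0$, one obtains $\tau_m(\omega) \geq 2^m(f_{\log|T'|}(\omega) - \var_m f_{\log|T'|}) - 1$ for every $\omega \in \Sigma$. Integrating against $\nu$ and using (\ref{eqn:integralequality}) gives
\[
\int \tau_m \diff\nu \geq 2^m \lambda_\nu - 2^m \var_m f_{\log|T'|} - 1.
\]
I would finish with a short case split: if $\lambda_\nu \geq 2(\var_m f_{\log|T'|} + 2^{-m})$, the right-hand side dominates $2^{m-1}\lambda_\nu$ and the quotient $\lambda_\nu/\int \tau_m \diff\nu$ is at most $2^{1-m}$; otherwise $\lambda_\nu$, and hence $h_\nu$, is itself bounded by $2(\var_m f_{\log|T'|} + 2^{-m})$, while the trivial bound $\int \tau_m \diff\nu \geq 1$ (valid because $\tau_m$ is a strictly positive integer-valued function) keeps the quotient bounded. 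Taking the maximum of the two cases yields the explicit bound $h_{\text{top}}(\Sigma_{\tau_m},\sigma) \leq 2\var_m f_{\log|T'|} + 2^{1-m} < \infty$.

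No step presents a genuine obstacle; all the ingredients (Lemma \ref{lem:measuremapsurjective}, the Abramov identity (\ref{eqn:susspaceentropyrelation}), and the entropy--Lyapunov bound of Lemma \ref{lem:entropylessthanlyapunov}) are already in place. The only mildly delicate point is the small-$\lambda_\nu$ case, where the Lyapunov-based lower bound on $\int \tau_m \diff\nu$ degenerates and one must fall back on the trivial lower bound $\tau_m \geq 1$ supplied by the construction.
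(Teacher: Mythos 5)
Your proposal is correct and follows exactly the same route as the paper's (one-line) proof: the variational principle (\ref{eqn:hvariationalprinciple}), the surjectivity of $\pi^*\circ M$ from Lemma \ref{lem:measuremapsurjective}, the Abramov identity (\ref{eqn:susspaceentropyrelation}), and the bound $h_\nu\leq\lambda_\nu$. The only difference is that you make explicit the final estimate $\lambda_\nu/\int\tau_m\diff\nu\leq 2\var_m f_{\log|T'|}+2^{1-m}$ (using $\tau_m\geq 2^m(f_{\log|T'|}-\var_m f_{\log|T'|})-1$ together with $\tau_m\geq 1$), a step the paper leaves implicit; your case split there is sound.
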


\begin{proof}
This follows by (\ref{eqn:susspaceentropyrelation}), Lemma \ref{lem:measuremapsurjective}, the variational principle (\ref{eqn:hvariationalprinciple}), and since \(h_{\mu} \leq \lambda_{\mu}\) for all \(\MCM_{\MCE}(\Sigma,\sigma)\). 
\end{proof}

We are now ready to prove Proposition \ref{prop:entropydensity}. 

\begin{proof}[Proof of Proposition \ref{prop:entropydensity}]
Let \(\nu \in  \MCM_{\lambda<\infty}(\Sigma,\sigma)\). By Proposition \ref{prop:entropydensitymike}, for each \(m \in \N\) there exists a sequence of ergodic measures \(\eta_{m,n} \in \MCM_{\MCE}(\Sigma_{\tau_m},\sigma)\) supported on finitely many symbols such that \(\eta_{m,n} \rightarrow \pi^* M_m \nu\) weak* and 
\begin{equation*}\label{eqn:sequenceofmeasures}
    \lim_{n \rightarrow \infty} h_{\eta_{m,n}}(\Sigma_{\tau_m},\sigma)=  h_{\pi^*M_m \nu}(\Sigma_{\tau_m},\sigma)=\frac{h_{\nu}(\Sigma,\sigma)}{\int \tau_m \diff \nu}.
\end{equation*}
The measures \(\nu_{m,n}:=M_m^{-1} (\widetilde{\pi}^{-1})^* (\eta_{m,n} \times \Leb_{[0,1)}) \in \MCM(\Sigma,\sigma)\) are ergodic and supported on finitely many symbols. Moreover,
since \(\widetilde{\pi}^{-1}\) is continuous and by Lemma 5.1 in \cite{ITV18}, the sequences \((\nu_{m,n})_{n \in \N}\) converge weak* to \(\nu\) and satisfy
\[\lim_{n \rightarrow \infty} \int \tau_m \diff \nu_{m,n} = \int \tau_m \diff \nu.\]
Equation (\ref{eqn:susspaceentropyrelation}) further gives that
\begin{align*}
    \lim_{n \rightarrow \infty} h_{\nu_{m,n}}(\Sigma,\sigma) &= \lim_{n \rightarrow \infty} h_{\eta_{m,n}}(\Sigma_{\tau_m,\sigma}) \int \tau_m \diff \nu_{m,n}  \\ 
    &= \frac{h_{\nu}(\Sigma,\sigma)}{\int \tau_m \diff \nu} \int \tau_m \diff \nu \\
    &= h_{\nu(\Sigma,\sigma)}.
\end{align*}
We now apply the Monotone Convergence Theorem and a diagonal argument to get the desired sequence. It is well known that, since \(\Sigma\) is separable, the weak* topology on \(\MCM(\Sigma,\sigma)\) is metrisable. Let \(d:\MCM(\Sigma,\sigma)^2 \rightarrow [0,\infty)\) be a given metric (take \(d\) to be the Lévy–Prokhorov metric, for example). For each \(m\), let \(n_m\) be such that \(\left|\int \frac{\tau_m}{2^m} \diff \nu_{m,n_m}- \int \frac{\tau_m}{2^m} \diff \nu \right| < \frac{1}{m}\), \(|h_{\nu_{m,n_m}}(\Sigma,\sigma)-h_{\nu}(\Sigma,\sigma)|<\frac{1}{m} \), and \(d(\nu_{m,n_m},\nu)<\frac{1}{m}\). Then clearly \(\nu_{m,n_m} \rightarrow \nu\) weak* and \(h_{\nu_{m,n_m}} \rightarrow h_{\nu}\). Moreover,
\begin{align*}
    \left|\lambda_{\nu_{m,n_m}}-\lambda_{\nu} \right| \leq \left|\lambda_{\nu_{m,n_m}}-\int \frac{\tau_m}{2^m} \diff \nu_{m,n_m}\right|+
    \left|\int \frac{\tau_m}{2^m} \diff \nu_{m,n_m}-\int \frac{\tau_m}{2^m} \diff \nu \right|+ \left|\int \frac{\tau_m}{2^m} \diff\nu- \lambda_{\nu} \right|,
\end{align*}
which, in view of equation (\ref{eqn:uniformcont}), converges to 0 as \(m \rightarrow \infty\).
\end{proof}

For the rest of this section, we assume that 
\begin{equation}\label{eqn:assumptioninsusspace}
    \inf\{|T'(x)|:x \in \overline{I_i} \setminus E\} \rightarrow \infty
\end{equation}
as \(i \rightarrow \infty\). To prove Proposition \ref{prop:uppersemicontinuity} we require the following lemma which follows from routine arguments involving the pressure. Since the proof is relatively long and unrelated to the countable Markov shifts \(\Sigma_{\tau_m}\), we will postpone the proof of this lemma until Section \ref{sec:uiexact} (see Lemmas \ref{lem:approxsinfinity} and \ref{lem:boundedlyapunov}). 
\begin{lemma}\label{lem:sinfinitycharacterisation}
\(s_{\infty}=\sup_{(\nu_n)_{n \in \N} \subset \MCM(\Sigma,\sigma)} \left\{\limsup_{n \rightarrow \infty} \frac{h_{\nu_n}}{\lambda_{\nu_n}}:\lambda_{\nu_n} \rightarrow \infty \right\}\).
\end{lemma}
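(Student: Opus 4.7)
The claim comprises two inequalities, and both proceed directly from the variational definition $P(\phi)=\sup_{\mu\in\MCM(\Lambda,T)}\{h_\mu+\int\phi\,d\mu:-\int\phi\,d\mu<\infty\}$ together with the bijection $\Pi$ (which preserves entropy and, via (\ref{eqn:integralequality}), the Lyapunov exponent). So it suffices to work in $\MCM(\Lambda,T)$ throughout and translate the result at the end.

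For the upper bound $s_\infty\geq\sup$: fix any $t>s_\infty$ and set $P_t:=P(-t\log|T'|)<\infty$. By the variational principle, every $\mu\in\MCM(\Lambda,T)$ with $\lambda_\mu<\infty$ satisfies $h_\mu-t\lambda_\mu\leq P_t$, hence
\[\frac{h_\mu}{\lambda_\mu}\leq t+\frac{P_t}{\lambda_\mu}.\]
For any admissible sequence $(\nu_n)$ with $\lambda_{\nu_n}\to\infty$ the error term vanishes, so $\limsup_n h_{\nu_n}/\lambda_{\nu_n}\leq t$. Letting $t\downarrow s_\infty$ finishes this half.

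For the lower bound $s_\infty\leq\sup$ we may assume $s_\infty>0$. First note $s_\infty\leq 1$: taking $t=1$, the inequality $h_\mu\leq\lambda_\mu$, which holds for all invariant measures by Lemma \ref{lem:entropylessthanlyapunov} combined with Remark \ref{remark:entropylessthanlypunov} (available since the proof is carried out in Section \ref{sec:uiexact}, after Theorem \ref{theo:uiapprox}), gives $P(-\log|T'|)\leq 0$. Now fix $t\in(0,s_\infty)$; then $P(-t\log|T'|)=+\infty$, so there exist $\mu_k^{(t)}\in\MCM(\Lambda,T)$ with $\lambda_{\mu_k^{(t)}}<\infty$ and $h_{\mu_k^{(t)}}-t\lambda_{\mu_k^{(t)}}\to\infty$ as $k\to\infty$. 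Combining this with $h\leq\lambda$,
\[(1-t)\lambda_{\mu_k^{(t)}}\geq h_{\mu_k^{(t)}}-t\lambda_{\mu_k^{(t)}}\xrightarrow[k\to\infty]{}\infty,\]
and since $t<1$ this forces $\lambda_{\mu_k^{(t)}}\to\infty$; dividing by $\lambda_{\mu_k^{(t)}}$ yields $\liminf_k h_{\mu_k^{(t)}}/\lambda_{\mu_k^{(t)}}\geq t$. A standard diagonal selection along a sequence $t_j\uparrow s_\infty$ then produces a single sequence $(\nu_j)$ with $\lambda_{\nu_j}\to\infty$ and $\limsup_j h_{\nu_j}/\lambda_{\nu_j}\geq s_\infty$, as required.

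The main obstacle is the step converting the qualitative information ``$P(-t\log|T'|)=\infty$'' into quantitative control of $\lambda_{\mu_k^{(t)}}$: a priori the pressure could be driven to infinity by entropy alone, without $\lambda$ growing, which would uncouple the two sides of the identity. The universal bound $h_\mu\leq\lambda_\mu$ for \emph{all} (not only ergodic) invariant measures is precisely what closes this gap, and establishing that extension in the countable-branch setting without circularity is likely why the paper isolates two auxiliary results, Lemmas \ref{lem:approxsinfinity} and \ref{lem:boundedlyapunov}, and postpones the argument to Section \ref{sec:uiexact}.
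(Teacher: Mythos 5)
Your argument for $s_\infty>0$ is essentially the paper's: the upper bound is the paper's Lemma \ref{lem:boundedlyapunov} (finiteness of $P(-t\log|T'|)$ for $t>s_\infty$ forces $\lambda_\mu$ to be bounded on any set of measures with $h_\mu/\lambda_\mu$ bounded away from $s_\infty$ from above), and the lower bound is the paper's Lemma \ref{lem:approxsinfinity} (use $P(-t\log|T'|)=\infty$ for $t<s_\infty$ together with $h_\mu\le\lambda_\mu$ to manufacture measures with $\lambda_{\mu}\to\infty$ and $h_\mu/\lambda_\mu>t$). Your observation that $h_\mu\le\lambda_\mu$ for all invariant measures is available without circularity, because Theorem \ref{theo:uiapprox} is already proved, matches Remark \ref{remark:entropylessthanlypunov} and the paper's own use of it.

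There is, however, a genuine gap: the sentence ``we may assume $s_\infty>0$'' is not justified, and the case $s_\infty=0$ is exactly where the standing hypothesis of the section, $\inf\{|T'(x)|:x\in\overline{I_i}\setminus E\}\to\infty$, must be invoked. When $s_\infty=0$ the lower bound amounts to showing that the set over which the supremum is taken is non-empty, i.e.\ that there exists \emph{some} sequence $(\nu_n)$ with $\lambda_{\nu_n}\to\infty$; since $h_\mu/\lambda_\mu\ge 0$ always, non-emptiness is all that is needed, but it is not automatic. Indeed, if $\log|T'|$ were bounded above then every $\mu$ would have $\lambda_\mu$ uniformly bounded, the constraint set would be empty, and the claimed identity would fail even though $s_\infty=0$; so the statement is simply false without condition (\ref{eqn:assumptioninsusspace}), and any proof must use it somewhere. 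Your argument never does. Note also that $s_\infty=0$ is not a negligible edge case: it occurs whenever $h_{\text{top}}(\Sigma,\sigma)<\infty$. The paper closes this case by splitting it further: if $h_{\text{top}}(\Sigma,\sigma)=\infty$ one takes $\mu_n$ with $h_{\mu_n}\to\infty$, whence $\lambda_{\mu_n}\ge h_{\mu_n}\to\infty$; if $h_{\text{top}}(\Sigma,\sigma)<\infty$ one uses (\ref{eqn:assumptioninsusspace}) together with Lemma 4.16 of \cite{IV19} to produce measures with $\lambda_{\mu_n}\to\infty$. You should add this case to complete the proof.
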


Before we continue, let us give some heuristics for the proof of Proposition \ref{prop:uppersemicontinuity}. Using Lemma \ref{lem:sinfinitycharacterisation} it can be shown that, since the measures \(\nu_n\) in Proposition 5.2 satisfy \(h_{\nu_n}/\lambda_{\nu_n}>s_{\infty}+\delta\) for all \(n \in \N\),
they must be tight and hence have a weak* limit point, \(\nu\) say. Assume that \(\nu_n \rightarrow \nu\) weak* (note this can be done without loss of generality). One may hope to use Proposition 8.5 in \cite{ITV19} to prove Proposition \ref{prop:uppersemicontinuity}. Unfortunately, the measures \(\nu_n\) converging weak* does not imply that the measures \(M \nu_n\) converge weak*, nor even is the set of measures \((M \nu_n)_{n \in \N}\) necessarily tight. To see this, consider the measures \(\rho_n:=(1-\varepsilon_n)\nu_n+\varepsilon_n \eta_n\), where \(\eta_n \in \MCM(\Sigma,\sigma)\) is a sequence of measures converging on cylinders to the zero measure and \(\varepsilon_n\) is a positive sequence converging to zero. Clearly \(\rho_n\) also converges weak* to \(\nu\), but as the map \(M\) gives a disproportionate amount of weight to cylinders where \(\log |T' \circ \Pi|\) is large, the part of the measure corresponding to \(\varepsilon_n \eta_n\) on the suspension space may not decay. However, as \(h_{\nu_n}/\lambda_{\nu_n}>s_{\infty}+\delta\) and by Lemma \ref{lem:sinfinitycharacterisation},
\begin{align*}
    \limsup_{n \rightarrow \infty} \frac{h_{\rho_n}}{\lambda_{\rho_n}}&= \limsup_{n \rightarrow \infty}  \frac{(1-\varepsilon_n)h_{\nu_n}+\varepsilon_n h_{\eta_n}}{(1-\varepsilon_n)\lambda_{\nu_n}+\varepsilon_n \lambda_{\eta_n}} \\ &\leq \limsup_{n \rightarrow \infty} \frac{h_{\nu_n}}{\lambda_{\nu_n}}.
\end{align*}
That is, intuitively, `small' bits of measure going off to infinity should not hinder upper semi-continuity from holding. 

While we cannot split the measures \(\nu_n\) up into parts that are staying bounded and parts going off to infinity, the behaviour of `small' bits of measure going off to infinity on \(\Sigma\) is captured by loss of mass in the topology of convergence on cylinders on \(\Sigma_{\tau_m}\). In particular, for each \(m \in \N\), by compactness \(\pi^* M_m \nu_n\) has a limit point in the topology of convergence on cylinders, \(\eta_m \in \MCM_{\leq 1}(\Sigma_{\tau_m})\) say. We will show that the measures \(\nu^{(m)}:=M_m^{-1} (\widetilde{\pi}^{-1})^* \left( \frac{\eta_m}{|\eta_m|} \times \Leb_{[0,1)} \right)\) are \textit{precisely} equal to \(\nu\).  Moreover, by Lemma \ref{lem:sinfinitycharacterisation} and (\ref{eqn:susspaceentropyrelation}) one can see that \(s_{\infty}\) is closely related to the quantities \(\delta_{\infty}(\Sigma_{\tau_m},\sigma)\). Hence using that \(\pi^* M_m \nu_n \rightarrow \eta_m\) on cylinders, Theorem \ref{theo:uppersemicontiuitymike}, and (\ref{eqn:susspaceentropyrelation}), we are able to prove the upper semi-continuity statement. 

The next two lemmas show that we can relate cylinders in \(\Sigma\) to cylinders in \(\Sigma_{\tau_m}\), and vice versa. In Lemma \ref{lem:sinfinitydeltainfinity} we will use this to relate \(s_{\infty}\) with the quantities \(\delta_{\infty}(\Sigma_{\tau_m},\sigma)\). Later, Lemma 5.9 will be used again to prove that the measures \(\nu^{(m)}\) are indeed equal to the weak* limit point \(\nu\). We remark that we do this directly by showing that, along some subsequence \(n_k\), \(\nu_{n_k} \rightarrow \nu^{(m)}\) on cylinders for every \(m\). Thus, it is not necessary for us to use the tightness of the measures \(\nu_n\) to deduce the existence of the weak* limit point \(\nu\).

\begin{lemma}\label{lem:cylindersgobacktocylinders}
For every cylinder \([\omega_1,\ldots,\omega_k] \subset \Sigma_{\tau_m}\) there exists a cylinder \([\omega'_1,\ldots,\omega'_{k'}] \subset \Sigma\) such that for all \(\nu \in \MCM(\Sigma,\sigma)\)
\[\pi^* M_m \nu([\omega_1,\ldots,\omega_k])= \frac{\nu([\omega'_1,\ldots,\omega'_{k'}])}{\int \tau_m \diff \nu}.\]
\end{lemma}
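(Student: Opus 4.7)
The plan is to describe the preimage $\pi^{-1}([\omega_1,\ldots,\omega_k])$ explicitly as a product set in $X$, and then read off the measure from the definition $M_m\nu = (\nu \times \Leb)|_X / \int \tau_m \diff \nu$. Recall that each symbol $a \in \N$ of $\Sigma_{\tau_m}$ is in bijection with a pair $(v,j)$ where $v \in \MCC_m(\Sigma)$ and $j \in \{1,\ldots,k_v\}$, via $a = j + \sum_{i=1}^{q(v)-1} k_{q^{-1}(i)}$; correspondingly, $\xi^{-1}(a) = [v] \times [j-1,j)$ as a subset of $X$.

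The first step is to decode $\omega_1$ as a pair $(v^{(1)},j_1)$ with $v^{(1)} = (\omega'_1,\ldots,\omega'_m) \in \MCC_m(\Sigma)$. The second step is to track how the successive letters $\omega_2,\ldots,\omega_k$ constrain $(\omega,x) \in X$. Within a fixed block, $\varphi_1$ increments the time coordinate by $1$, so $\xi(\varphi_1^{j-1}(\omega,x))$ increases by $1$ as $j$ advances, until we would exceed $\tau_m(\omega) = k_{v^{(i)}}$; the identification $(\omega,\tau_m(\omega)) = (\sigma\omega,0)$ then moves us into the next $m$-block $v^{(i+1)} = (\omega'_{i+1},\ldots,\omega'_{i+m})$ at position $0$. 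Note that the integrality of $\tau_m$ guarantees that the fractional part $\{x\}$ is preserved under all iterations of $\varphi_1$, so $x$ imposes no extra constraint beyond $\lfloor x \rfloor = j_1 - 1$. The transition matrix of $\Sigma_{\tau_m}$ is designed precisely so that the cylinder $[\omega_1,\ldots,\omega_k]$ either encodes an admissible path $v^{(1)} \to v^{(2)} \to \cdots \to v^{(l)}$ in $\Sigma_m$ together with a starting position $j_1$, or fails to be realisable (in which case both sides of the claimed equality are zero). When realisable, concatenating the $v^{(i)}$ with their natural overlap of length $m-1$ yields an admissible word $(\omega'_1,\ldots,\omega'_{k'}) \in \MCC_{k'}(\Sigma)$ with $k' = m + l - 1$.

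Putting this together, the preimage is
\[\pi^{-1}([\omega_1,\ldots,\omega_k]) = [\omega'_1,\ldots,\omega'_{k'}] \times [j_1-1, j_1),\]
as a subset of $X$, which is well-defined because $j_1 \leq k_{v^{(1)}} = \tau_m(\omega)$ for every $\omega \in [v^{(1)}]$. Applying the definition of $M_m$ gives
\[\pi^* M_m \nu([\omega_1,\ldots,\omega_k]) = \frac{(\nu \times \Leb)([\omega'_1,\ldots,\omega'_{k'}] \times [j_1 - 1, j_1))}{\int \tau_m \diff \nu} = \frac{\nu([\omega'_1,\ldots,\omega'_{k'}])}{\int \tau_m \diff \nu}.\]

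The main effort is purely combinatorial: carefully verifying that symbol-by-symbol decoding of $\omega_1,\ldots,\omega_k$ assembles into a coherent overlapping sequence of $m$-windows of $\Sigma$. No measure theory beyond the definition of $M_m$ is needed once the preimage is pinned down.
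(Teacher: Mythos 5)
Your proof is correct and follows essentially the same route as the paper: decode the cylinder of \(\Sigma_{\tau_m}\) into overlapping \(m\)-blocks, identify \(\pi^{-1}([\omega_1,\ldots,\omega_k])\) as a cylinder of \(\Sigma\) times a unit-length interval in the fibre, and apply the definition of \(M_m\). The only cosmetic difference is that the paper first uses shift-invariance (and the forced transitions within a block) to extend the word so that it starts and ends at block boundaries, making the fibre \([0,1)\), whereas you keep the offset \([j_1-1,j_1)\) --- harmless, since that interval has Lebesgue measure one either way.
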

\begin{proof}
 Let \([\omega_1,\ldots,\omega_k] \subset \Sigma_{\tau_m}\). Note that we may assume that \(\omega_1=1+\sum_{j=1}^{l-1} k_{q^{-1}(j)}\) for some \(l \in \N\), as if \(1+\sum_{j=1}^{l-1} k_{q^{-1}(j)} < \omega_1 \leq \sum_{j=1}^{l} k_{q^{-1}(j)}\) for some \(l\) then for any \(\eta \in \MCM(\Sigma_{\tau_m},\sigma)\)
 \begin{align*}
     \eta([\omega_1,\ldots,\omega_k])&=\eta \left(\sigma^{-(\omega_1-1-\sum_{j=1}^{l-1} k_{q^{-1}(j)})} \left[\omega_1,\ldots,\omega_k \right] \right) \\
     &= \eta \left(\left[ 1+\sum_{j=1}^{l-1} k_{q^{-1}(j)},2+\sum_{j=1}^{l-1} k_{q^{-1}(j)},\ldots,\omega_1,\ldots,\omega_k \right] \right).
 \end{align*}
Similarly we may assume that \(\omega_k=\sum_{j=1}^{l'} k_{q^{-1}(j)}\) for some \(l' \in \N\), since if \(1+\sum_{j=1}^{l'-1} k_{q^{-1}(j)} \leq \omega_k < \sum_{j=1}^{l'} k_{q^{-1}(j)}\) for some \(l' \in \N\), then 
\[[\omega_1,\ldots,\omega_k]=\left[\omega_1,\ldots,\omega_k,\omega_k+1,\ldots, \sum_{j=1}^{l'} k_{q^{-1}(j)}\right].\]
We can therefore write \([\omega_1,\ldots,\omega_k]\) more explicitly as \[\left[1+\sum_{j=1}^{l_1-1}k_{q^{-1}(j)},\ldots,\sum_{j=1}^{l_1}k_{q^{-1}(j)},1+\sum_{j=1}^{l_2-1}k_{q^{-1}(j)},\ldots,\sum_{j=1}^{l_2}k_{q^{-1}(j)}, \ldots, 1+\sum_{j=1}^{l_{k'}-1}k_{q^{-1}(j)},\ldots,\sum_{j=1}^{l_{k'}}k_{q^{-1}(j)}  \right]\]
for some \((l_1,\ldots,l_{k'}) \in \N^{k'}\) where \(k' \leq k\). Then \(\pi^{-1}([\omega_1,\ldots,\omega_k])\) is equal to \([\omega'_1,\ldots,\omega'_{k'+m-1}] \times [0,1) \subset X \) for some cylinder \([\omega'_1,\ldots,\omega'_{k'+m-1}] \subset \Sigma\). Hence
\begin{align*}
    \pi^* M_m \nu([\omega_1,\ldots,\omega_k] )&=M_m \nu([\omega'_1,\ldots,\omega'_{k'+m-1}] \times [0,1) ) \\
    &=\frac{\nu([\omega'_1,\ldots,\omega'_{k'+m-1}])}{\int \tau_m \diff \nu}.
\end{align*}
\end{proof}

\begin{lemma}\label{lem:cylindersgotocylinders}
For every cylinder \([\omega_1,\ldots,\omega_k] \subset \Sigma\) with \(k \geq m\), there exists \([\omega'_1,\ldots, \omega'_{k'}] \subset \Sigma_{\tau_m} \) such that for all \(\nu \in \MCM(\Sigma,\sigma)\)
\[\nu([\omega_1,\ldots,\omega_k])=\pi^*  M_m \nu ([\omega'_1,\ldots,\omega'_{k'}]) \int \tau_m \diff \nu.\] 
\end{lemma}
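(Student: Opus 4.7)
The plan is to mirror the argument of Lemma \ref{lem:cylindersgobacktocylinders} in reverse: rather than pulling a cylinder back from $\Sigma_{\tau_m}$ to $\Sigma$, I will push the set $[\omega_1,\ldots,\omega_k] \times [0,1) \subset X$ forward through $\pi$ and exhibit a cylinder $[\omega'_1,\ldots,\omega'_{k'}] \subset \Sigma_{\tau_m}$ whose $\pi$-preimage equals exactly this set. Once this set equality is established, the measure identity follows directly from the definition of $M_m$ in (\ref{eqn:susspacemeasuremap}), since
\[\pi^* M_m \nu([\omega'_1,\ldots,\omega'_{k'}]) = M_m \nu([\omega_1,\ldots,\omega_k] \times [0,1)) = \frac{\nu([\omega_1,\ldots,\omega_k])}{\int \tau_m \diff \nu},\]
which on rearranging gives the stated formula.

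The construction of $[\omega'_1,\ldots,\omega'_{k'}]$ is dictated by tracking how $\xi$ evolves along the flow $\varphi_1$ starting from $(\omega,0)$ for any $\omega \in [\omega_1,\ldots,\omega_k]$. For each $i \in \{1,\ldots,k-m+1\}$, set
\[a_i := 1 + \sum_{j=1}^{q(\omega_i,\ldots,\omega_{i+m-1})-1} k_{q^{-1}(j)}, \qquad b_i := \sum_{j=1}^{q(\omega_i,\ldots,\omega_{i+m-1})} k_{q^{-1}(j)},\]
and let $(\omega'_j)_{j=1}^{k'}$ be the concatenation of the blocks $(a_i,a_i+1,\ldots,b_i)$ for $i = 1,\ldots,k-m+1$, so that $k' = \sum_{i=1}^{k-m+1} k_{(\omega_i,\ldots,\omega_{i+m-1})}$. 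This range of $i$ uses the hypothesis $k \geq m$: it is precisely the range over which $\tau_m(\sigma^{i-1}\omega)$ is fixed by the cylinder. Admissibility of $(\omega'_j)$ in $\Sigma_{\tau_m}$ is automatic from the definition of its transition matrix given just before Lemma \ref{lem:iso2}.

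The content of the proof is the set equality $\pi^{-1}([\omega'_1,\ldots,\omega'_{k'}]) = [\omega_1,\ldots,\omega_k] \times [0,1)$. For $\supset$, a direct computation using the explicit form of $\xi$ shows that for $(\omega,x)$ on the right, the iterates $\varphi_1^{j-1}(\omega,x)$ for $j=1,\ldots,\tau_m(\omega)$ stay in the fibre over $\omega$ and give $\xi$-values $a_1,a_1+1,\ldots,b_1$; the next $\tau_m(\sigma\omega)$ iterates lie over $\sigma\omega$ with $\xi$-values $a_2,\ldots,b_2$; and so on up to coordinate $k'$. For $\subset$, if $\pi(\omega,x)_1 = a_1$ then the formula for $\xi$ forces $\lfloor x \rfloor = 0$ (so $x \in [0,1)$) and $q(\omega_1,\ldots,\omega_m)$ to equal the prescribed value, which pins down $(\omega_1,\ldots,\omega_m)$. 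Inductively, matching the block that starts at position $1 + \sum_{l<i}\tau_m(\sigma^{l-1}\omega)$ identifies $(\omega_i,\ldots,\omega_{i+m-1})$ for each $i \leq k-m+1$, and thus $\omega \in [\omega_1,\ldots,\omega_k]$.

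The only delicate point is the indexing book-keeping that links the position in the $\Sigma_{\tau_m}$-word to the number of $\varphi_1$-iterates required to traverse the successive fibres, but this is essentially the same computation that appears in Lemma \ref{lem:cylindersgobacktocylinders} run in the other direction, so no new ideas are needed.
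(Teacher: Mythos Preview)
Your proposal is correct and follows essentially the same approach as the paper: both identify the cylinder $[\omega'_1,\ldots,\omega'_{k'}]$ as the $\pi$-image of $[\omega_1,\ldots,\omega_k]\times[0,1)$ and read off the measure identity from the definition of $M_m$. The paper simply asserts that $\pi([\omega_1,\ldots,\omega_k]\times[0,1))$ is a cylinder with $k'\geq k-m+1$ and leaves the verification as ``easy to see'', whereas you spell out the explicit block structure and check both inclusions of the set equality $\pi^{-1}([\omega'_1,\ldots,\omega'_{k'}])=[\omega_1,\ldots,\omega_k]\times[0,1)$; your formulation in terms of the preimage is in fact the one directly required for the pushforward measure computation.
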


\begin{proof}
By (\ref{eqn:susspacemeasuremap}) we have 
\[M_m \nu ([\omega_1,\ldots,\omega_k] \times [0,1))=\frac{\nu([\omega_1,\ldots,\omega_k])}{\int \tau_m \diff \nu}.\]
It is easy to see that \(\pi([\omega_1,\ldots,\omega_k] \times [0,1))=[\omega'_1,\ldots,\omega'_{k'}]\) for some cylinder \([\omega'_1,\ldots,\omega'_{k'}] \subset \Sigma_{\tau_m}\), where \(k' \geq k-m+1\). Thus 
\[\nu([\omega_1,\ldots,\omega_k])=\pi^* M_m \nu ([\omega'_1,\ldots,\omega'_{k'}]) {\int \tau_m \diff \nu}.\]
\end{proof}

\begin{lemma}\label{lem:sinfinitydeltainfinity}
\(\lim_{m \rightarrow \infty} 2^m \delta_{\infty}(\Sigma_{\tau_m},\sigma)= s_{\infty}\).
\end{lemma}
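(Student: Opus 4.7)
The plan is to prove both inequalities $\liminf_{m \to \infty} 2^m \delta_\infty(\Sigma_{\tau_m},\sigma) \geq s_\infty$ and $\limsup_{m \to \infty} 2^m \delta_\infty(\Sigma_{\tau_m},\sigma) \leq s_\infty$ separately. The core tools are Lemma~\ref{lem:sinfinitycharacterisation}, which gives the variational characterisation of $s_\infty$ through sequences with $\lambda_{\nu_n} \to \infty$; the entropy identity~(\ref{eqn:susspaceentropyrelation}); the uniform convergence~(\ref{eqn:uniformcont}); and the variational principle $\delta_\infty = h_\infty$ applied on each $\Sigma_{\tau_m}$, which has finite topological entropy by Lemma~\ref{lem:finitetopentropy}.

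For the lower bound, take $(\nu_n) \subset \MCM(\Sigma,\sigma)$ with $\lambda_{\nu_n} \to \infty$ and $\limsup h_{\nu_n}/\lambda_{\nu_n} \geq s_\infty - \varepsilon$. For fixed $m$, (\ref{eqn:uniformcont}) forces $\int \tau_m \diff \nu_n \to \infty$, so Lemma~\ref{lem:cylindersgobacktocylinders} yields $\eta_n := \pi^* M_m \nu_n \to 0$ on every cylinder of $\Sigma_{\tau_m}$. Combining (\ref{eqn:susspaceentropyrelation}) with the elementary bound $\int \tau_m \diff \nu_n \leq 2^m \lambda_{\nu_n}$ (from $\tau_m/2^m \leq f_{\log|T'|}$) gives $h_{\eta_n} \geq h_{\nu_n}/(2^m \lambda_{\nu_n})$, so $h_\infty = \delta_\infty$ on $\Sigma_{\tau_m}$ produces $2^m \delta_\infty(\Sigma_{\tau_m}) \geq s_\infty - \varepsilon$ for every $m$; letting $\varepsilon \to 0$ gives the lower bound.

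For the upper bound, where assumption (\ref{eqn:assumptioninsusspace}) becomes essential, fix $\varepsilon > 0$ and $m$, and pick $(\eta_{m,n})_n$ converging on cylinders to the zero measure with $\limsup_n h_{\eta_{m,n}} \geq \delta_\infty(\Sigma_{\tau_m}) - \varepsilon/2^m$. By the Ambrose-Kakutani correspondence (extending Lemma~\ref{lem:measuremapsurjective} to arbitrary invariant measures via the bijection $M_m$ and the homeomorphism $\widetilde{\pi}$, where $\varphi_t$-invariance for all $t$ forces the product structure $\eta \times \Leb$ on $\Sigma_{\tau_m} \times [0,1)$), write $\eta_{m,n} = \pi^* M_m \nu_{m,n}$ for some $\nu_{m,n} \in \MCM_{\lambda<\infty}(\Sigma,\sigma)$. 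The main obstacle of the proof is now to show $\int \tau_m \diff \nu_{m,n} \to \infty$ as $n \to \infty$: assumption (\ref{eqn:assumptioninsusspace}) implies $\tau_m(\omega) \to \infty$ as $\omega_1 \to \infty$, so $\{\tau_m \leq L\}$ is a finite union of $m$-cylinders for each $L$. If $\int \tau_m \diff \nu_{m,n}$ stayed bounded along a subsequence, Markov's inequality plus a double pigeonhole argument (over the finitely many relevant $m$-cylinders and over the subsequence) would produce a fixed $m$-cylinder $[\omega^*_1,\ldots,\omega^*_m]$ of $\nu_{m,n}$-mass bounded below, and Lemma~\ref{lem:cylindersgobacktocylinders} would then contradict $\eta_{m,n}([a^*]) \to 0$ for the letter $a^* \in \N$ associated to it.

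Once $\int \tau_m \diff \nu_{m,n} \to \infty$, (\ref{eqn:uniformcont}) gives $\lambda_{\nu_{m,n}} \to \infty$ together with the bound $\int \tau_m \diff \nu_{m,n} \geq 2^m(\lambda_{\nu_{m,n}} - \varepsilon_m)$, where $\varepsilon_m \to 0$ uniformly in $\nu$. Combining these with (\ref{eqn:susspaceentropyrelation}) yields
\[2^m h_{\eta_{m,n}} \;\leq\; \frac{h_{\nu_{m,n}}}{\lambda_{\nu_{m,n}} - \varepsilon_m} \;=\; \frac{h_{\nu_{m,n}}}{\lambda_{\nu_{m,n}}} \cdot \frac{\lambda_{\nu_{m,n}}}{\lambda_{\nu_{m,n}} - \varepsilon_m}.\]
Taking $\limsup_n$, the second factor tends to $1$ and Lemma~\ref{lem:sinfinitycharacterisation} bounds the first by $s_\infty$, giving $2^m \delta_\infty(\Sigma_{\tau_m}) \leq s_\infty + \varepsilon$; sending $\varepsilon \to 0$ completes the upper bound for every $m$, and combining with the lower bound yields the claimed limit.
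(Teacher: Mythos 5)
Your lower bound is essentially the paper's argument verbatim, and your overall strategy for the upper bound (transport a zero-converging, entropy-maximising sequence on \(\Sigma_{\tau_m}\) back to \(\Sigma\), show the Lyapunov exponents blow up, then invoke Lemma \ref{lem:sinfinitycharacterisation}) is sound and in fact slightly sharper than the paper's, which argues by contradiction only for \(m\) large. But there is one genuine gap in the upper bound, at the step you yourself identify as the main obstacle.

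The claim that \(\{\tau_m \leq L\}\) is a finite union of \(m\)-cylinders is false. The value \(k_{(\omega_1,\ldots,\omega_m)}\) is bounded below by roughly \(2^m\inf_{I_{\omega_1}}\log|T'|\), so assumption (\ref{eqn:assumptioninsusspace}) only forces the \emph{first} coordinate of any cylinder in \(\{\tau_m\leq L\}\) to range over a finite set; for a fixed small first letter \(\omega_1\) there are in general infinitely many \(m\)-cylinders \([\omega_1,\omega_2,\ldots,\omega_m]\) on which \(\tau_m\) stays below \(2^m\sup_{[\omega_1]}f_{\log|T'|}+1\). Consequently your pigeonhole can fail: if \(\int\tau_m\diff\nu_{m,n}\) stayed bounded, the mass of \(\nu_{m,n}\) could a priori spread over \(m\)-cylinders of the form \([\omega_1^*,j,\ldots]\) with \(j\to\infty\), leaving no single \(m\)-cylinder with mass bounded below, and hence no contradiction with \(\eta_{m,n}\to 0\) on cylinders. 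The missing ingredient is shift-invariance: if \(\lambda_{\nu_{m,n}}\leq C\) along a subsequence, then by Markov's inequality \(\nu_{m,n}\bigl(\bigcup_{j>Q}[j]\bigr)\leq C/K_Q\) with \(K_Q:=\inf_{j>Q}\inf_{I_j}\log|T'|\to\infty\), and by invariance the \(\nu_{m,n}\)-mass of the set of points whose first \(m\) coordinates are \emph{not all} \(\leq Q\) is at most \((m-1)C/K_Q\). Choosing \(Q\) large therefore places mass at least \(1/2\) on the genuinely finite collection of \(m\)-cylinders with all entries \(\leq Q\), and \emph{then} your pigeonhole plus Lemma \ref{lem:cylindersgotocylinders} (note: this is the lemma you want here, not Lemma \ref{lem:cylindersgobacktocylinders}) yields the desired contradiction with \(\eta_{m,n}\to 0\). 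With this repair your argument goes through and actually delivers \(2^m\delta_\infty(\Sigma_{\tau_m},\sigma)=s_\infty\) for every \(m\), whereas the paper deduces the equality only in the limit, obtaining boundedness of \(\lambda_{\nu_n}\) from Lemma \ref{lem:boundedlyapunov} after first using \(m\) large to convert \(2^m h_{\nu_n}/\int\tau_m\diff\nu_n>s_\infty+\varepsilon\) into \(h_{\nu_n}/\lambda_{\nu_n}>s_\infty+\varepsilon/2\). (The paper's own final step, ``since this holds for all \(m\)th level cylinders it follows that \(\lambda_{\nu_n}\to\infty\)'', quietly relies on the same invariance argument.)
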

\begin{proof}
We first show \(s_{\infty} \leq 2^m \delta_{\infty}(\Sigma_{\tau_m},\sigma)\) for any \(m \in \N\). Fix \(m \in \N\). By Lemma \ref{lem:approxsinfinity} we can find a sequence \(\nu_n \in \MCM(\Sigma,\sigma)\) such that \(\frac{h_{\nu_n}}{\lambda_{\nu_n}} \rightarrow s_{\infty} \) and \(\lambda_{\nu_n} \rightarrow \infty\). Then by Lemma \ref{lem:cylindersgobacktocylinders}, for any cylinder \([\omega_1,\ldots,\omega_k] \subset \Sigma_{\tau_m}\)
\begin{align*}
    \pi^* M \nu_n([\omega_1,\ldots, \omega_k]) &=\frac{\nu_n([\omega'_1,\ldots,\omega'_{k'}])}{\int \tau_m \diff \nu_n}  \\
    &\leq \frac{1}{\int \tau_m \diff \nu_n} \xrightarrow[n \rightarrow \infty]{} 0 
\end{align*}
and so \(\pi^* M \nu_n \rightarrow 0\) on cylinders. Furthermore 
\begin{align*}
    \delta_{\infty}(\Sigma_{\tau_m},\sigma) &\geq \limsup_{n \rightarrow \infty} h_{\pi^* M \nu_n }(\Sigma_{\tau_m},\sigma) \\
    &= \limsup_{n \rightarrow \infty} \frac{h_{\nu_n}(\Sigma,\sigma)}{\int \tau_m \diff \nu_n} \\
    &\geq \limsup_{n \rightarrow \infty} \frac{h_{\nu_n}(\Sigma,\sigma)}{2^m\lambda_{\nu_n}} \\
    &= \frac{s_{\infty}}{2^m}.
\end{align*}

Let \(\varepsilon>0\). We now show that for all \(m\) large enough \(2^m \delta_{\infty}(\Sigma_{\tau_m},\sigma) \leq s_{\infty}+\varepsilon\). Fix \(m \in \N\), which we will assume is large so that the supremum in (\ref{eqn:uniformcont}) is small, and suppose for a contradiction that \(2^m \delta_{\infty}(\Sigma_{\tau_m},\sigma)>s_{\infty}+\varepsilon\). We may find \(\eta_n \in \MCM(\Sigma_{\tau_m},\sigma)\) such that \(h_{\eta_n}(\Sigma_{\tau_m},\sigma) \rightarrow \delta_{\infty}(\Sigma_{\tau_m},\sigma)\) and \(\eta_n \rightarrow 0\) on cylinders. Then for \(\nu_n:=M^{-1} (\widetilde{\pi}^{-1})^*(\eta_n \times \Leb) \in \MCM(\Sigma,\sigma)\) and all \(n\) sufficiently large we have
\[2^m \frac{h_{\nu_n}(\Sigma,\sigma)}{\int \tau_m \diff \nu_n} > s_{\infty}+\varepsilon.\]
Assuming we had chosen our \(m\) large enough, for these \(n\) we can further have 
\[\frac{h_{\nu_n}(\Sigma,\sigma)}{\lambda_{\nu_n}} > s_{\infty}+\frac{\varepsilon}{2}.\]
Thus Lemma \ref{lem:sinfinitycharacterisation} implies that the sequence \((\lambda_{\nu_n})_{n \in \N}\) is bounded, which in turn implies that the sequence \((\int \tau_m \diff \nu_n)_{n \in \N}\) is bounded. However, by Lemma \ref{lem:cylindersgotocylinders}, for any \(m\)th level cylinder \([\omega_1,\ldots,\omega_m] \subset \Sigma\) \[\frac{\nu_n([\omega_1,\ldots,\omega_m])}{ \int \tau_m \diff \nu_n }=\eta_n ([\omega'_1,\ldots,\omega'_{k'}])\]
for some cylinder \([\omega'_1,\ldots,\omega'_{k'}] \subset \Sigma_{\tau_m}\). As \(\eta_n \rightarrow 0\) on cylinders and \((\int \tau_m \diff \nu_n)_{n \in \N}\) is bounded, this implies that we must have 
\[\nu_n([\omega_1,\ldots,\omega_m]) \xrightarrow[n \rightarrow \infty]{} 0.\]
Since this holds for all \(m\)th level cylinders \([\omega_1,\ldots,\omega_m] \subset \Sigma\), by our assumption (\ref{eqn:assumptioninsusspace}) it follows that \(\lambda_{\nu_n} \rightarrow \infty\). This is the desired contradiction.
\end{proof}

\begin{lemma}\label{lem:mlimitoflyapunov}
Let \(\eta_n \in \MCM(\Sigma_{\tau_m},\sigma)\) and \(\eta \in \MCM_{\leq 1}(\Sigma_{\tau_m},\sigma) \setminus\{0\}\) be such that \(\eta_n \rightarrow \eta\) on cylinders. Let \(\nu_n:=M^{-1}(\widetilde{\pi}^{-1})^* (\eta_n \times \Leb)\) and \(\nu:=M^{-1}(\widetilde{\pi}^{-1})^* (\frac{\eta}{|\eta|} \times \Leb) \). Then
\[\int \tau_m \diff \nu_n \xrightarrow[n \rightarrow \infty]{} \frac{1}{|\eta|} \int \tau_m \diff \nu.\]
\end{lemma}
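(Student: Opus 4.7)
The plan is to reduce the claim to showing that $\eta_n(B) \to \eta(B)$, where
\[ B := \bigsqcup_{\underline{\omega} \in \MCC_m(\Sigma)} [s_{\underline{\omega}}] \subset \Sigma_{\tau_m} \]
is the set of sequences whose first coordinate is a ``start-of-string'' symbol. The reduction follows the computation underlying Lemma \ref{lem:cylindersgotocylinders}: for every $\underline{\omega} \in \MCC_m(\Sigma)$, the image of $[\underline{\omega}] \times [0,1)$ under $\widetilde{\pi}$ is $[s_{\underline{\omega}}] \times [0,1)$, and hence $\nu_n([\underline{\omega}]) = \eta_n([s_{\underline{\omega}}]) \int \tau_m \diff \nu_n$. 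Summing over $\MCC_m(\Sigma)$, which partitions $\Sigma$, yields $1 = \eta_n(B) \int \tau_m \diff \nu_n$; the analogous identity applied to $\nu$ (which is built from the probability measure $\eta/|\eta|$) gives $\int \tau_m \diff \nu = |\eta|/\eta(B)$. Thus $\int \tau_m \diff \nu_n \to \frac{1}{|\eta|} \int \tau_m \diff \nu$ is equivalent to $\eta_n(B) \to \eta(B)$.

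The lower bound $\liminf_n \eta_n(B) \geq \eta(B)$ follows immediately from Fatou's lemma applied to the countable sum $\eta_n(B) = \sum_{\underline{\omega}} \eta_n([s_{\underline{\omega}}])$, together with the hypothesis that $\eta_n \to \eta$ on cylinders. I would also observe that $\eta(B) > 0$: by $\sigma$-invariance of $\eta$ and the deterministic internal transitions within each string of $\Sigma_{\tau_m}$, we have $\eta([s^{(l)} + j]) = \eta([s^{(l)}])$ for all $0 \leq j < k_l$, so $|\eta| = \sum_l k_l \eta([s^{(l)}])$; if $\eta(B)$ were zero, we would conclude $|\eta| = 0$, contradicting $\eta \neq 0$.

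The main obstacle is the matching upper bound $\limsup_n \eta_n(B) \leq \eta(B)$, and this is precisely where assumption (\ref{eqn:assumptioninsusspace}) plays its role. I would argue by contradiction: if along a subsequence $\eta_n(B) \to L$ with $L > \eta(B) > 0$, then $\int \tau_m \diff \nu_n \to 1/L$ is bounded. Since $\tau_m(\omega) \geq 2^m \log \inf\{|T'(x)| : x \in \overline{I_{\omega_1}} \setminus E\} - 1$ diverges as $\omega_1 \to \infty$, a uniform bound on $\int \tau_m \diff \nu_n$ combined with Markov's inequality gives tightness of $(\nu_n)$ on $\Sigma$. Passing to a further subsequence, $\nu_n$ converges weak*---equivalently, on cylinders by tightness---to some $\nu^* \in \MCM(\Sigma, \sigma)$, which the tightness forces to be a probability measure. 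Taking limits in $\nu_n([\underline{\omega}]) = \eta_n([s_{\underline{\omega}}])/\eta_n(B)$ yields $\nu^*([\underline{\omega}]) = \eta([s_{\underline{\omega}}])/L$ for every $\underline{\omega} \in \MCC_m(\Sigma)$. Summing this identity over the partition of $\Sigma$ into $m$-cylinders gives $1 = \eta(B)/L$, contradicting $L > \eta(B)$. Combined with the lower bound, this establishes $\eta_n(B) \to \eta(B)$ and hence the lemma.
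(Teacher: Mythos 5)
Your argument is correct, but it takes a genuinely different route from the paper's. The paper introduces the test function \(f(\omega)=1/k_{q'(\omega_1)}\) on \(\Sigma_{\tau_m}\), checks that \(f\in C_0(\Sigma_{\tau_m})\) (this is where the standing assumption (\ref{eqn:assumptioninsusspace}) enters there, through the decay of \(1/k_{q'(a)}\) along the symbols of \(\Sigma_{\tau_m}\)), applies Lemma \ref{lem:convergenceoncylinderstestfunctions} to get \(\int f \diff \eta_n \rightarrow \int f \diff \eta\), and then identifies \(\int \widetilde{f}\circ\widetilde{\pi}\diff\rho = 1/\int\tau_m\diff(M^{-1}\rho)\) via Kac's lemma. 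In fact the two proofs meet at the same pivot: by the equidistribution of an invariant measure along each string (the same computation you use to show \(\eta(B)>0\)), \(\int f\diff\eta_n\) is exactly your \(\eta_n(B)\), and the Kac identity is exactly your normalisation \(\eta_n(B)\int\tau_m\diff\nu_n=1\), which you instead derive by summing Lemma \ref{lem:cylindersgotocylinders} over the partition of \(\Sigma\) into \(m\)-cylinders. So both proofs reduce the lemma to \(\eta_n(B)\rightarrow\eta(B)\); the real difference is how that convergence is obtained. The paper outsources it to the \(C_0\) test-function characterisation of convergence on cylinders from \cite{IV19}, whereas you prove it by hand: Fatou for the lower bound (which needs no hypothesis on \(T'\)), and for the upper bound a compactness/contradiction argument in which (\ref{eqn:assumptioninsusspace}) enters only through the divergence of \(\tau_m\) in the first \(\Sigma\)-digit, forcing boundedness of \(\int\tau_m\diff\nu_n\) to yield tightness and a probability limit \(\nu^*\) whose total mass gives the contradiction. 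Two small points to make explicit: the tightness step uses the \(\sigma\)-invariance of the \(\nu_n\) to control every coordinate, not just the first (alternatively, take a limit in \(\MCM_{\leq 1}(\Sigma,\sigma)\) in the cylinder topology and check no mass is lost); and the lower bound \(\tau_m(\omega)\geq 2^m\log\inf\{|T'(x)|:x\in\overline{I_{\omega_1}}\setminus E\}-1\) should be stated with the usual care about \(E\), though this is immaterial. Your version is more self-contained and localises precisely where (\ref{eqn:assumptioninsusspace}) is used, at the cost of re-proving, for this particular \(f\), the content of the cited test-function lemma.
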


Note that when \(|\eta|=1\) we recover that \(\int \tau_m \diff \nu_n \rightarrow \int \tau_m \diff \nu\), as was argued in the proof of Proposition \ref{prop:entropydensity} (that is, as one would expect from Lemma 5.1 in \cite{ITV18}). This lemma extends this to the case where mass is lost.

\begin{proof}
Recalling how we defined the map \(q: \MCC_m(\Sigma) \rightarrow \N\), we can write each \(a \in \N\) uniquely as 
\(a=j+\sum_{i=1}^{q(\omega_1,\ldots,\omega_m)-1} k_{q^{-1}(i)}\)
for some \((\omega_1,\ldots,\omega_m) \in \MCC_m(\Sigma)\) and \(j \in \{1, \ldots, k(\omega_1,\ldots,\omega_m)\}\). Let \(q':\N \rightarrow \MCC_m(\Sigma)\) be the map that takes \(a \in \N\) to this \((\omega_1,\ldots,\omega_m)\) and let \(f: \Sigma_{\tau_m} \rightarrow \R\) be the function given by \(f(\omega)=\frac{1}{k_{q'(\omega_1)}}\). Then \(f\) is a locally constant function which depends only on the first digit and \(\lim_{n \rightarrow \infty} \sup_{\omega \in [n]} |f(\omega)|=0\). Thus \(f \in C_0(\Sigma_{\tau_m})\), so we have 
\[\int f \diff \eta_n \xrightarrow[n \rightarrow \infty]{} \int f \diff \eta=|\eta| \int f \diff {\left(\frac{\eta}{|\eta|} \right)}.\]
Define the function \(\widetilde{f}:\Sigma_{\tau_m} \times [0,1) \rightarrow \R\) by \(\widetilde{f}(\omega,t)=f(\omega)\). We have 
\[\int \widetilde{f} \diff{(\eta_n \times \Leb)} \xrightarrow[n \rightarrow \infty]{} |\eta| \int \widetilde{f} \diff \!\left(\frac{\eta}{|\eta|} \times \Leb \right),\]
and so 
\begin{equation}\label{eqn:convergenceonsusspace}
    \int \widetilde{f}\circ \widetilde{\pi} \diff((\widetilde{\pi}^{-1})^* (\eta_n \times \Leb)) \xrightarrow[n \rightarrow \infty]{} |\eta| \int \widetilde{f} \circ \widetilde{\pi} \diff{\left((\widetilde{\pi}^{-1})^* \left(\frac{\eta}{|\eta|} \times \Leb \right) \right)}.
\end{equation}
By Kac's lemma, for each \(\rho \in \MCM(X, \varphi_t)\),
\[ \int_{X}\widetilde{f}\circ \widetilde{\pi} \diff\rho = \frac{\int_{\Sigma} \left( \int_0^{\tau_m(\omega)} \widetilde{f}\circ \widetilde{\pi}(\omega,t) \diff t \right) \diff(M^{-1}\rho) }{\int_{\Sigma} \tau_m \diff (M^{-1}\rho)}. \]
Note that
\[\int_0^{\tau_m(\omega)} \widetilde{f}\circ \widetilde{\pi}(\omega,t) \: \diff t =\int_0^{k_{(\omega_1,\ldots , \omega_m)}} \frac{1}{k_{(\omega_1,\ldots , \omega_m)}} \: \diff t = 1\]
for all \(\omega \in \Sigma\). Thus, for any \(\rho \in \MCM(X, \varphi_t)\) 
\[\int_{X}\widetilde{f}\circ \widetilde{\pi} \diff \rho = \frac{1}{\int_{\Sigma} \tau_m \diff(M^{-1}\rho)}. \] 
Finally, with \(\nu_n:=M^{-1}(\widetilde{\pi}^{-1})^* (\eta_n \times \Leb)\) and \(\nu:=M^{-1}(\widetilde{\pi}^{-1})^* (\frac{\eta}{|\eta|} \times \Leb) \), this together with equation (\ref{eqn:convergenceonsusspace}) implies 
\[\int \tau_m \: \diff \nu_n \xrightarrow[n \rightarrow \infty]{} \frac{1}{|\eta|} \int \tau_m \diff \nu.\]
\end{proof}

\begin{proof}[Proof of Proposition \ref{prop:uppersemicontinuity}]
Let \(\nu_n \in \MCM_{\lambda< \infty}(\Sigma,\sigma)\) be such that \(\frac{h_{\nu_n}}{\lambda_{\nu_n}}>s_{\infty}+\delta\) for all \(n \in \N\). Restricting to a subsequence if necessary, we may assume that \(\lim_{n \rightarrow \infty} \frac{h_{\nu_n}}{\lambda_{\nu_n}}=\limsup_{n \rightarrow \infty} \frac{h_{\nu_n}}{\lambda_{\nu_n}}\). For \(m=1\), since \(\Sigma_{\tau_1}\) has finite topological entropy and by Theorem 1.2 in \cite{IV19}, there exists \(\eta_1 \in \MCM_{\leq 1}(\Sigma_{\tau_1},\sigma)\) and a subsequence \((n_{1,l})_{l \in \N}\) such that \(\pi^* M_1 \nu_{n_{1,l}} \rightarrow \eta_1 \) on cylinders. Similarly, for \(m>1\) there exists \(\eta_m \in \MCM_{\leq 1}(\Sigma_{\tau_m},\sigma)\) and a subsequence \((n_{m,l})_{l \in \N}\subseteq (n_{1,l})_{l \in \N} \) such that \(\pi^* M_m \nu_{n_{m,l}} \rightarrow \eta_m \) on cylinders. By Theorem \ref{theo:uppersemicontiuitymike} and (\ref{eqn:susspaceentropyrelation}), 
\begin{equation}\label{eqn:uppersemicontapplied}
    \frac{s_{\infty}+\delta}{2^m} \leq \liminf_{n \rightarrow \infty} \frac{h_{\nu_n}}{\int \tau_m \diff \nu_n} \leq \limsup_{l \rightarrow \infty} \pi^* M_m \nu_{n_{m,l}}  \leq |\eta_m| h_{\frac{\eta_m}{|\eta_m|}}+(1-|\eta_m|) \delta_{\infty}(\Sigma_{\tau_m},\sigma).
\end{equation}
Since \(\frac{h_{\nu_n}}{\lambda_{\nu_n}}>s_{\infty}+\delta\) for all \(n \in \N\), by Lemma \ref{lem:boundedlyapunov} the sequence \((\lambda_{\nu_n})_{n \in \N}\) must be bounded. Using the argument in the proof of Lemma \ref{lem:sinfinitydeltainfinity}, this implies that \(\eta_m\) cannot be the zero measure for any \(m \in \N\) (otherwise \(\nu_n([\omega_1,\ldots, \omega_m]) \rightarrow 0\) for all \([\omega_1,\ldots, \omega_m] \in \MCC_m(\Sigma)\), implying that \(\lambda_{\nu_n}\) is unbounded).
Thus, for each \(m \in \N\) we can define \(\nu^{(m)}:=M_m^{-1} (\widetilde{\pi}^{-1})^* \left( \frac{\eta_m}{|\eta_m|} \times \Leb \right)\). Given any cylinder \([\omega_1,\ldots,\omega_k] \subset \Sigma \), where \(k \geq m\), by Lemma \ref{lem:cylindersgotocylinders} and Lemma \ref{lem:mlimitoflyapunov}
\begin{align*}
    \lim_{l \rightarrow \infty} \nu_{n_{m,l}}([\omega_1,\ldots,\omega_k])  &=\lim_{l \rightarrow \infty} \pi^* M_m \nu_{n_{m,l}} ([\omega'_1,\ldots, \omega'_{k'}])  \int \tau_m \diff \nu_n \\
    &= \frac{1}{|\eta_m|} \eta_m([\omega'_1,\ldots, \omega'_{k'}])  \int \tau_m \diff \nu^{(m)}  \\
    &= \nu^{(m)}([\omega_1,\ldots,\omega_k]),
\end{align*}
where \([\omega'_1,\ldots, \omega'_{k'}]\) is some cylinder in \(\Sigma_{\tau_m} \). For \(m=1\), this shows that \(\nu_{n_{1,l}} \rightarrow \nu^{(1)}\) on cylinders. Since \((n_{m,l})_{l \in \N} \subseteq (n_{1,l})_{l \in \N}\), clearly \(\nu_{n_{m,l}} \rightarrow \nu^{(1)}\) on cylinders. As \(\nu_{n_{m,l}}([\omega_1,\ldots,\omega_k]) \rightarrow \nu^{(m)}([\omega_1,\ldots,\omega_k]) \) for any \([\omega_1,\ldots,\omega_k] \in \cup_{k \geq m} \MCC_k(\Sigma)\), this implies that \(\nu^{(m)}=\nu^{(1)}\). We may therefore drop the dependence of \(\nu^{(m)}\) on \(m\) and write \(\nu:=\nu^{(m)}\). By Lemma 3.17 in \cite{IV19} we further have that \(\nu_{n_{1,l}} \rightarrow \nu\) weak*.

By Lemma \ref{lem:sinfinitydeltainfinity} and (\ref{eqn:uppersemicontapplied}), we have 
\[s_{\infty}+\delta \leq 2^m \liminf_{n \rightarrow \infty} \frac{h_{\nu_n}}{\int \tau_m \diff \nu_n} \leq 2^m |\eta_m| \frac{h_{\nu}}{\int \tau_m \diff \nu}+(1-|\eta_m|) \left(s_{\infty}+\varepsilon(m) \right)\]
with \(\varepsilon(m) \rightarrow 0\) as \(m \rightarrow \infty\). This implies that for all \(m\) large enough
\[2^m \frac{h_{\nu}}{\int \tau_m \diff \nu}\geq s_{\infty}+\delta,\]
and consequently
\[2^m \liminf_{n \rightarrow \infty} \frac{h_{\nu_n}}{\int \tau_m \diff \nu_n} \leq 2^m \frac{h_{\nu}}{\int \tau_m \diff \nu}.\]
Hence by the Monotone Convergence Theorem,
\begin{align*}
    \frac{h_{\nu}}{\lambda_{\nu}} &= \inf_{m \in \N} \frac{h_{\nu}}{2^{-m} \int \tau_m \diff \nu} \\
    &\geq \inf_{m \in \N} \liminf_{n \rightarrow \infty} \frac{h_{\nu_n}}{2^{-m}\int \tau_m \diff \nu_n} \\
    &\geq \liminf_{n \rightarrow \infty} \inf_{m \in \N}  \frac{h_{\nu_n}}{2^{-m}\int \tau_m \diff \nu_n} \\
    &=\lim_{n \rightarrow \infty} \frac{h_{\nu_n}}{\lambda_{\nu_n}}.
\end{align*}
This completes the proof of Proposition \ref{prop:uppersemicontinuity}.
\end{proof}

\section{Proof of Theorem \ref{theo:uiapprox}}
The following proposition proves the first statement in Theorem \ref{theo:uiapprox}.

\begin{prop}\label{prop:notinZimpliesempty}
If \(\ugamma \not\in Z\), then \(\Lambda_{\MCR}(\underline{\gamma})=\emptyset.\)
\end{prop}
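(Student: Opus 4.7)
The plan is to verify the contrapositive: given $x \in \Lambda_{\MCR}(\underline{\gamma})$, I will construct a sequence of measures $\mu_k \in \MCM(\Lambda,T)$ such that $\int \phi_i \diff \mu_k \to \gamma_i$ for every $i \in \N$, which immediately places $\underline{\gamma}$ in $Z$. Set $\omega := \Pi^{-1}(x) \in \MCR$; by (\ref{eqn:birkhoffaverageequality}) and (\ref{eqn:integralequality}) we may work entirely on the symbolic side, where $\frac{1}{n}\sum_{j=0}^{n-1} f_{\phi_i}(\sigma^j \omega) \to \gamma_i$ for each $i$.

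By the definition of $\MCR$, pick $q \in \N$ and an increasing sequence $n_k \to \infty$ with $\omega_{n_k} = q$ for every $k$. Topological mixing of $(\Sigma,\sigma)$ furnishes $N \in \N$ and a word $w = (w_1,\ldots,w_N)$ such that $q \to w_1 \to \cdots \to w_N \to \omega_1$ is admissible; fix such $N$ and $w$. For each $k$, let $p_k \in \Sigma$ be the periodic point whose fundamental block is $(\omega_1,\ldots,\omega_{n_k},w_1,\ldots,w_N)$, let $\nu_k \in \MCM(\Sigma,\sigma)$ be the evenly distributed measure on its orbit, and set $\mu_k := \Pi^*\nu_k \in \MCM(\Lambda,T)$ using the $\Pi$-bijection recalled in Section 2. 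By (\ref{eqn:integralequality}) it suffices to show that $\int f_{\phi_i}\diff\nu_k \to \gamma_i$ for each fixed $i$.

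For $0 \leq j < n_k$, the points $\sigma^j p_k$ and $\sigma^j \omega$ share their first $n_k - j$ coordinates, so $|f_{\phi_i}(\sigma^j p_k) - f_{\phi_i}(\sigma^j \omega)| \leq \var_{n_k-j}(f_{\phi_i})$. Since $\var_n(f_{\phi_i}) \to 0$, a Cesaro argument gives
\[
\frac{1}{n_k}\left|\sum_{j=0}^{n_k-1} f_{\phi_i}(\sigma^j p_k) - \sum_{j=0}^{n_k-1} f_{\phi_i}(\sigma^j \omega)\right| \leq \frac{1}{n_k}\sum_{\ell=1}^{n_k}\var_{\ell}(f_{\phi_i}) \xrightarrow[k\to\infty]{} 0,
\]
which combined with the symbolic Birkhoff convergence yields $\frac{1}{n_k}\sum_{j=0}^{n_k-1} f_{\phi_i}(\sigma^j p_k) \to \gamma_i$. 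The remaining $N$ terms $f_{\phi_i}(\sigma^{n_k + j} p_k)$ lie in the fixed cylinders $[w_{j+1}]$, on which $f_{\phi_i}$ is bounded because $\var_1(f_{\phi_i}) < \infty$, so their contribution divided by $n_k + N$ tends to $0$. Hence $\int f_{\phi_i}\diff\nu_k \to \gamma_i$, and given $\varepsilon > 0$ and $k_0 \in \N$ in the definition of $Z$, a single sufficiently large $\mu_k$ simultaneously satisfies the bound for all $i \leq k_0$.

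The main subtlety, and where the argument must be handled carefully, is the potential unboundedness of the $\phi_i$: this prevents a direct weak*-limit-of-empirical-measures argument and forces the hand-built construction above, in which the observed initial segment $(\omega_1,\ldots,\omega_{n_k})$ is spliced onto a fixed return block $w$. This isolates an easily-controlled bounded remainder (from $w$) from the long piece (from $\omega$), whose average is controlled purely by variation, which is precisely what the hypothesis $\var_n(f_{\phi_i}) \to 0$ supplies.
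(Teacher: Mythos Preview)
Your proof is correct and follows essentially the same strategy as the paper: build periodic-orbit measures from initial segments of the coding $\omega$ and control the difference to $A_n\phi_i(x)$ via the decay of variations. The only real difference is cosmetic: the paper first shifts $\omega$ so that the recurring symbol $a$ sits at position~1, and then takes $n$ with $\omega_n=a$, so the word $(\omega_1,\dots,\omega_{n-1})$ already closes up and no bridge word is needed; the periodic point then lies in $C_n(\omega)$ and Lemma~\ref{lem:variationsgoingtozero} applies directly. Your bridge-word construction achieves the same end with a short extra estimate for the $N$ bridge terms, which is a perfectly acceptable (and arguably more explicit) alternative.
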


\begin{proof}
We adapt the proof of Proposition 4.1 in \cite{FJLR15}. Given \(\ugamma\) assume there exists \(x \in \Lambda_{\MCR}(\ugamma)\) such that \(\lim_{n \rightarrow \infty} A_n \phi_i(x)=\gamma_i\) for all \(i \in \N\). Let \(\omega \in \Sigma\) satisfy \(\Pi \omega =x\). Let \(a \in \N\) be such that \(\omega_i=a\) infinitely often. We may assume \(\omega_1=a\) since for any \(j \in \N\) we have \(\lim_{n \rightarrow \infty} A_n \phi_i(T^j(x))=\gamma_i\). If we fix \(\varepsilon>0\) and \(k \in \N\), then by Lemma \ref{lem:variationsgoingtozero} we can find \(N \in \N\) such that for all \(n \geq N\)
\[\sup_{1 \leq i \leq k} |A_n \phi_i(x)-\gamma_i|\leq \varepsilon/2 \]
\begin{equation*}
    \sup_{1 \leq i \leq k} \sup_{y \in C_n(\omega)} |A_n \phi_i(x)-A_n \phi_i(y)| \leq \varepsilon/2.
\end{equation*}
For some \(n > N\) we must have \(\omega_n=\omega_1\). Let \(\nu\) be the shift invariant probability measure on \(\Sigma\) defined on the periodic orbit \(\overline{(\omega_1,\ldots,\omega_{n-1})}\). The measure \(\mu=\nu \circ \Pi^{-1}\) satisfies \(|\int \phi_i \diff \mu - \gamma_i| \leq \varepsilon\) for each \(1 \leq i \leq k\). This finishes the proof.
\end{proof}

\subsection{Upper Bound}
We now prove the upper estimate.
\begin{prop}\label{prop:approxupperbound}
For \(\gamma \in Z\) we have
\[\dim \Lambda_{\MCR}(\underline{\gamma}) \leq \alpha_2(\ugamma).\]
\end{prop}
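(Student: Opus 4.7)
The plan is to cover $\Lambda_{\MCR}(\ugamma)$ by basic intervals and to bound the resulting $s$-Hausdorff sum by constructing auxiliary ergodic invariant measures that fall within the class over which $\alpha_2(\ugamma)$ is defined. Fix $s > \alpha_2(\ugamma)$, choose $\delta > 0$ with $s > \alpha_2(\ugamma) + \delta$, and use the double limit defining $\alpha_2(\ugamma)$ to pick $\varepsilon_0 > 0$ and $k_0 \in \N$ so that every ergodic $\mu \in \MCM_{\MCE}(\Lambda,T)$ with $\lambda_\mu < \infty$ and $\left| \int \phi_i \diff \mu - \gamma_i \right| < \varepsilon_0$ for all $i \leq k_0$ satisfies $h_\mu/\lambda_\mu \leq \alpha_2(\ugamma) + \delta/2$.

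By countable stability of Hausdorff dimension, decompose $\Lambda_{\MCR}(\ugamma) = \bigcup_{a, L, M} \Lambda^{a, L, M}(\ugamma)$, where $a \in \N$ ranges over digits occurring infinitely often in the coding, $L \in \eta\N$ (for small $\eta > 0$) over the rounded Lyapunov range along return times to $a$, and $M \in \N$ over an upper bound on the digits used along these return times. For each triple $(a,L,M)$, cover $\Lambda^{a,L,M}(\ugamma)$ by $\bigcup_{n \geq N}\bigcup_{\omega \in \mathcal{W}_n^{a,L,M}} C_n(\omega)$, where $\mathcal{W}_n^{a,L,M}$ consists of admissible length-$n$ words with $\omega_1 = \omega_n = a$, all digits in $\{1,\ldots,M\}$, $|M_*\phi_i(\omega) - \gamma_i| < \varepsilon_0/2$ for $i \leq k_0$, and $M_*\log|T'|(\omega) \in [L - \eta, L + \eta]$. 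This collection is finite for each $n$, and validity of the cover follows from Lemma \ref{lem:variationsgoingtozero} combined with Birkhoff convergence of $A_n \phi_i$ at return times.

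To bound $|\mathcal{W}_n^{a,L,M}|$, apply a Bernoulli-on-blocks construction: the length-$(n-1)$ prefixes of words in $\mathcal{W}_n^{a,L,M}$ start at $a$ and end with a letter transitioning admissibly back to $a$, so they may be freely concatenated. Realising the uniform Bernoulli on blocks as a stationary shift-invariant measure on $\Sigma$ (via a skew-product of the Bernoulli on block labels and a uniform distribution on position-within-block) produces an ergodic $\widetilde{\nu}_{n,M} \in \MCM_{\MCE}(\Sigma,\sigma)$ supported on digits $\leq M$ (hence with finite Lyapunov), whose entropy equals $\log|\mathcal{W}_n^{a,L,M}|/(n-1)$ and whose integrals satisfy $|\int \phi_i \diff \Pi_*\widetilde{\nu}_{n,M} - \gamma_i| \leq \varepsilon_0/2 + o_n(1)$ for $i \leq k_0$ and $\lambda_{\Pi_*\widetilde{\nu}_{n,M}} \in [L-\eta, L+\eta] + o_n(1)$, with the $o_n(1)$ corrections coming from boundary transitions between blocks. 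For $n$ sufficiently large (depending on $M$), $\Pi_*\widetilde{\nu}_{n,M}$ satisfies the hypothesis on $\alpha_2(\ugamma)$, giving $|\mathcal{W}_n^{a,L,M}| \leq e^{(n-1)(L+\eta)(\alpha_2(\ugamma) + \delta/2)}$. Combined with the diameter estimate $\diam(C_n(\omega)) \leq C_a e^{-n(L-\eta) + n\varepsilon(n)}$ from Lemma \ref{lem:diameterfunction},
\[
\sum_{\omega \in \mathcal{W}_n^{a,L,M}} \diam(C_n(\omega))^s \leq C_a^s\, e^{o(n)}\, e^{-n[s(L-\eta) - (L+\eta)(\alpha_2(\ugamma) + \delta/2)]},
\]
and the bracketed exponent exceeds a positive multiple of $L$ for $\eta$ small relative to $\delta$. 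Summing geometrically over $n \geq N$ and then over $L \in \eta\N$ yields $\mathcal{H}^s(\Lambda^{a,L,M}(\ugamma)) < \infty$ for each $(a,L,M)$; countable stability over $a,L,M$ then gives $\dim \Lambda_\MCR(\ugamma) \leq s$, and letting $s \to \alpha_2(\ugamma)^+$ completes the proof.

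The main technical obstacle is the construction and verification of the block-Bernoulli measure $\widetilde{\nu}_{n,M}$: realising it as a bona fide stationary, ergodic, shift-invariant measure on $\Sigma$, verifying the entropy identity $h_{\widetilde{\nu}_{n,M}} = \log|\mathcal{W}_n^{a,L,M}|/(n-1)$, and tracking the $M$-dependent boundary corrections so that the hypothesis defining $\alpha_2(\ugamma)$ applies for $n$ sufficiently large. A secondary subtlety is verifying that the $(a,L,M)$-decomposition genuinely exhausts $\Lambda_\MCR(\ugamma)$; residual points whose codings have either $L_j \to \infty$ or $M_j \to \infty$ along every return subsequence must be handled by a separate argument, exploiting either the smallness of the associated covering cylinders (when $L$ is forced to be large) or a finer sub-decomposition of the maximum digit (when $M$ grows).
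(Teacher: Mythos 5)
The decisive problem is your \((a,L,M)\)-decomposition, which you concede needs "a separate argument": that deferred argument is in fact the crux of the proposition, not a secondary subtlety. A point of \(\Lambda_{\MCR}(\ugamma)\) only needs a single digit \(a\) to recur in its coding; the coding may contain arbitrarily large digits, and the Birkhoff averages of \(\log|T'|\) along the orbit need not converge, stay bounded, or revisit any fixed window \([L-\eta,L+\eta]\) infinitely often (\(\log|T'|\) is not assumed bounded and is not one of the \(\phi_i\)). For such a point there is no pair \((L,M)\) for which infinitely many return-time prefixes lie in \(\mathcal{W}_n^{a,L,M}\), so your covers miss it at all sufficiently small scales and the countable-stability step only bounds the dimension of the bounded-digit, pinned-Lyapunov part of \(\Lambda_{\MCR}(\ugamma)\). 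The residual set is not negligible: already in the full-shift/Gauss-map situation the points whose digits (and hence Lyapunov averages) escape to infinity carry dimension \(s_{\infty}>0\), and showing that such points cannot exceed \(\alpha_2(\ugamma)\) is precisely part of what must be proved. Neither of your proposed rescues is workable as stated: once the digits are unbounded the relevant word families are infinite, so there is no cardinality bound \(|\mathcal{W}_n|\) to trade against the smallness of the cylinders, and a "finer sub-decomposition of the maximum digit" reproduces the same counting problem one level down.

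For comparison, the paper avoids stratifying at all. It covers \(\Lambda_{\MCR,a}(\ugamma)\) by \emph{all} words in \(\MCP_{n,a}(\ugamma)\), with no digit bound and no Lyapunov window, and anchors the argument to the unknown dimension \(\tilde{s}=\dim\Lambda_{\MCR,a}(\ugamma)\): if the sums \(\sum_{I}\diam(\Pi(I))^{\tilde{s}-\varepsilon}\) were eventually \(\leq 1\) the \((\tilde{s}-\varepsilon+\delta)\)-Hausdorff measure would be finite, a contradiction, so the sums exceed \(1\) for infinitely many \(n\); one then extracts a \emph{finite} subfamily, picks the exponent \(s_n>\tilde{s}-\varepsilon\) making the sum exactly \(1\), and gives each block the weight \(\diam(\Pi(I))^{s_n}\). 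The resulting block-Bernoulli measure (your construction in the counting step is essentially the same device, with uniform weights) then has \(h/\lambda\gtrsim s_n\) automatically, with the Lyapunov exponent of the \emph{measure} controlled through Lemma \ref{lem:diameterfunction} rather than through any assumption on the covered points. Your selection of \(\varepsilon_0,k_0\) from the monotone double limit, the free concatenation of blocks returning to \(a\), the entropy identity via Abramov, and the diameter estimates are all sound, but without a correct exhaustion of \(\Lambda_{\MCR}(\ugamma)\) the proof does not reach the stated conclusion; weighting by diameters as in the paper (or some equivalent mechanism) is needed precisely to absorb the unbounded-digit and divergent-Lyapunov points.
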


We adapt the proof from the proof of the upper bound of Theorem 1.1 in \cite{FJLR15}. Note that as we are considering subsets of \(\Lambda_{\MCR}\), we are still able to define the \(\sigma^n\)-invariant Bernoulli measures used in the proof. 

Let 
\[ \MCP_{n,a}:=\{ (\omega_1,\ldots,\omega_n) \in \MCC_n(\Sigma) : \omega_1=a, (\omega_n,a) \text{ is admissible}\}\]
Fix \(k \in \N\) and \(\varepsilon>0\). For \(\ugamma=(\gamma_1,\ldots,\gamma_k) \in \R^k\) and \(n \in \N\), denote by \(\MCP_{n,a}(\ugamma)\) the set of intervals 
\[\{(\omega_1,\ldots,\omega_n) \in \MCP_{n,a}:A_n \phi_i(x) \in (\gamma_i-\varepsilon,\gamma_i+\varepsilon), \forall x \in C_n(\omega_1,\ldots,\omega_n), \forall 1 \leq i \leq k \}.\]

\begin{proof}[Proof of Proposition \ref{prop:approxupperbound}]
We can write 
\begin{equation*}
    \Lambda_{\MCR}(\ugamma)=\bigcup_{a=1}^{\infty} \bigcup_{j=0}^{\infty} T^{-j} ( \Lambda_{\MCR}(\ugamma) \cap \{\Pi \omega \in \Lambda: \omega_1=a, \omega_i=a \text{ infinitely often} \}).
\end{equation*}
Hence, as the \(T_j\) are bi-Lipschitz on \(\overline{I_j}\) and by the countable stability of Hausdorff dimension, it suffices to prove an upper bound for the sets
\begin{equation*}
    \Lambda_{\MCR,a}(\ugamma):=\Lambda_{\MCR}(\ugamma) \cap \{\Pi \omega \in \Lambda: \omega_1=a, \omega_i=a \text{ infinitely often} \}.
\end{equation*}
Fix \(a \in \N\) and let \(\tilde{s}=\dim \Lambda_{\MCR,a}(\ugamma)\). Given \(\varepsilon>0\) and \(k \in \N\), it follows from Lemma \ref{lem:variationsgoingtozero} that the  basic intervals corresponding to the set \(\bigcup_{n \geq l} \MCP_{n,a}(\ugamma)\) is a covering for \(\Lambda_{\MCR,a}(\ugamma)\) for any \(l \in \N\). We must have 
\[\sum_{I \in \MCP_{n,a}(\ugamma)} \diam(\Pi(I))^{\tilde{s}- \varepsilon} > 1 \]
for infinitely many \(n\). This must be true as otherwise for any \(\delta>0\) we would have
\begin{align*}
    \MCH^{\tilde{s}-\varepsilon+\delta}(\Lambda_{\MCR,a}(\ugamma)) &\leq \lim_{l \rightarrow \infty} \sum_{n \geq l} \sum_{I \in \MCP_{n,a}(\ugamma)} \diam(\Pi(I))^{\tilde{s}- \varepsilon+\delta} \\
    &\leq \lim_{l \rightarrow \infty} \sum_{n \geq l} \frac{1}{\zeta^{n \delta}} <\infty,
        \end{align*}
contradicting that \(\tilde{s}=\dim \Lambda_{\MCR,a}(\ugamma)\). For these \(n\) we can choose a finite subfamily \(\MCS_n \subset \MCP_{n,a}(\ugamma)\) such that the sum of their diameters in power of \(\tilde{s}-\varepsilon\) is still greater than 1. We can then choose a different exponent \(s_n>\tilde{s}-\varepsilon\) for which this sum is equal to 1. Thus, for these \(n\) we can define a \(\sigma^n\)-invariant Bernoulli measure \(\eta_n\) on \(\Sigma\) by giving each \(I \in \MCP_{n,a}(\ugamma)\) weight \(\diam(\Pi(I))^{s_n}\). Then the measures 
\[\nu_n=\frac{1}{n} \sum_{i=0}^{n-1} \eta_n \circ \sigma^{-i}\]
are \(\sigma\)-invariant, ergodic, and satisfy \(\int \phi_i \circ \Pi \diff \nu_n \in (\gamma_i-\varepsilon,\gamma_i+\varepsilon)\) for all \(i \leq k\). Moreover, by Abramov's formula for entropy (see \cite{PU10}, Theorem 2.4.6)
\[h_{\nu_n}=-\frac{1}{n} \sum_{I \in \MCP_{n,a}(\ugamma)} \diam(\Pi(I))^{s_n} \log  \diam(\Pi(I))^{s_n}   \]
and, by Lemma \ref{lem:diameterfunction}, for \(n\) large enough
\[\lambda_{\nu_n} \leq-\frac{1}{n} \sum_{I \in \MCP_{n,a}(\ugamma)} \diam(\Pi(I))^{s_n} \log  \diam(\Pi(I))+\varepsilon.\]
Thus, considering the measures \(\mu_n:= \Pi^* \nu_n\),
\begin{align*}
    \tilde{s}&<s_n+\varepsilon \\
    &\leq \frac{h_{\mu_n}}{\lambda_{\mu_n}-\varepsilon}+\varepsilon \\
    &\leq \frac{\log \zeta}{\log \zeta-\varepsilon}  \frac{h_{\mu_n}}{\lambda_{\mu_n}}+\varepsilon \\
    &\leq \frac{\log \zeta}{\log \zeta-\varepsilon}  \sup_{\mu \in \MCM_{\MCE}(\Lambda,T)} \left\{\frac{h_{\mu}}{\lambda_{\mu}}:\left|\int \phi_i \diff \mu-\gamma_i \right|<\varepsilon, \forall i \leq k \right\}+\varepsilon.
\end{align*}
Taking the limit as \(k \rightarrow \infty\) and \(\varepsilon \rightarrow 0\) completes the proof.
\end{proof}

\subsection{Lower Bound}\label{subsec:approxtheolowerbound}
We now prove the lower bound.
\begin{prop}\label{prop:approxlowerbound}
For \(\ugamma \in Z\) we have
\begin{align*}
\dim \Lambda_{\MCR}(\underline{\gamma}) &\geq \alpha_1(\ugamma)
\end{align*}
\end{prop}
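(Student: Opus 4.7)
The plan is a Moran-type Cantor construction combined with a mass distribution principle, with care taken to push invariant measures onto finite sub-shifts (where the \(\phi_i\) are bounded) and to weave orbit pieces together using the topological mixing of \((\Sigma,\sigma)\). Fix \(\delta>0\); the goal is to construct a set \(K \subset \Lambda_{\MCR}(\ugamma)\) with \(\dim K \geq \alpha_1(\ugamma)-2\delta\). By the definition of \(\alpha_1(\ugamma)\) I first select \(\mu_n \in \MCM(\Lambda,T)\) with \(\lambda_{\mu_n}<\infty\), \(\left|\int \phi_i \diff\mu_n-\gamma_i\right|<1/n\) for all \(i \leq n\), and \(h_{\mu_n}/\lambda_{\mu_n} \geq \alpha_1(\ugamma)-\delta\). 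Applying Proposition \ref{prop:entropydensity} to the pull-back of each \(\mu_n\) to \(\Sigma\) and diagonalising, I obtain ergodic measures \(\nu_n \in \MCM_{\MCE}(\Sigma,\sigma)\), each supported on a finite sub-alphabet \(\Sigma_n \subset \N\), with \(h_{\nu_n}/\lambda_{\nu_n} \to \alpha_1(\ugamma)-\delta\) and \(\left|\int \phi_i \circ \Pi \diff\nu_n - \gamma_i\right|<\varepsilon_n\) for \(i \leq k_n\), with \(\varepsilon_n \downarrow 0\) and \(k_n \uparrow \infty\). The finite-support conclusion of Proposition \ref{prop:entropydensity} is crucial here, because the \(\phi_i\) are only locally bounded and their integrals would otherwise not pass through weak* limits.

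Next, for each \(n\) I apply the Shannon--McMillan--Breiman theorem, Birkhoff's ergodic theorem, Egorov's theorem, Lemma \ref{lem:variationsgoingtozero} and Lemma \ref{lem:diameterfunction} to produce a set \(G_n \subset \Sigma\) with \(\nu_n(G_n)>1-\varepsilon_n\) and a level \(M_n\) such that for every \(\omega \in G_n\) and every \(m \geq M_n\),
\begin{align*}
\nu_n([\omega_1,\ldots,\omega_m]) &\leq e^{-m(h_{\nu_n}-\varepsilon_n)}, \\
\diam C_m(\omega_1,\ldots,\omega_m) &\geq e^{-m(\lambda_{\nu_n}+\varepsilon_n)}, \\
|A_m(\phi_i \circ \Pi)(\omega)-\gamma_i| &\leq 2\varepsilon_n \quad (i \leq k_n).
\end{align*}
Let \(W_n\) be the set of admissible words of length \(N_n \geq M_n\) that arise as prefixes of points of \(G_n\); the parameter \(N_n\) will be chosen later. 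Using topological mixing of \((\Sigma,\sigma)\), for each pair of symbols \((a,b) \in \Sigma_n \times \Sigma_{n+1}\) I fix an admissible connecting word of bounded length \(\ell_n\) passing through a reference symbol \(a_\star \in \Sigma_1\). Admissible concatenations \(w_1 u_1 w_2 u_2 \cdots w_n\), with \(w_j \in W_j\) and \(u_j\) a connector, define nested basic intervals whose intersection \(\Pi(K)\) lies in \(\Pi(\MCR)\) (since \(a_\star\) appears infinitely often in every resulting code); the countable set \(\bigcup_{j\geq 0} T^{-j}E\) can then be discarded without affecting dimension.

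Third, I choose the growth rates \(N_n\) so that \(\sum_{j<n}(N_j+\ell_j)=o(N_n)\) and so that \(N_n\) dominates \(\max\{|f_{\phi_i}|\) on \(\Sigma_j : i \leq k_n,\ j<n\}\). A direct averaging argument then shows that \(A_m\phi_i(x) \to \gamma_i\) for every \(x \in \Pi(K)\) and every \(i \in \N\): the contribution to the Birkhoff sum from blocks earlier than the current \(n\) is negligible by the growth condition, while the current block forces the average to within \(O(\varepsilon_n)+O(\var_{N_n}\phi_i)\) of \(\gamma_i\). This places \(\Pi(K)\) inside \(\Lambda_{\MCR}(\ugamma)\).

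Finally, I apply the mass distribution principle by assigning to each level-\(n\) cylinder the (normalised) weight \(\prod_{j\leq n} \nu_j([w_j])\). By Step 2 this mass is at most \(\prod_{j\leq n} e^{-N_j(h_{\nu_j}-\varepsilon_j)}\), while the associated basic interval has diameter at least \(\prod_{j\leq n} e^{-N_j(\lambda_{\nu_j}+\varepsilon_j)-O(\ell_j)}\). A standard local-dimension computation then yields
\[\dim \Pi(K) \geq \liminf_{n \to \infty} \frac{h_{\nu_n}}{\lambda_{\nu_n}} \geq \alpha_1(\ugamma)-2\delta,\]
and letting \(\delta \to 0\) finishes the proof. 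The main obstacles I expect are Step 1, where unboundedness of the \(\phi_i\) forces us to rely on the finite-support output of Proposition \ref{prop:entropydensity} to preserve the integral constraints in the limit, and the bookkeeping on the growth rates \(N_n\) in Step 3, needed to guarantee that all the Birkhoff averages \(A_m\phi_i\) converge simultaneously to their target values despite the \(\phi_i\) being unbounded over the full alphabet.
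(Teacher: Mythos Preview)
Your proposal is correct and follows essentially the same route as the paper: both reduce to ergodic measures on finite sub-alphabets via Proposition \ref{prop:entropydensity}, build a \(w\)-measure by concatenating good Birkhoff/SMB blocks with bridge words passing through a fixed symbol (to force recurrence), and then apply the mass distribution principle. The paper carries out the final mass-distribution step in more detail---in particular controlling how many basic intervals a ball of radius \(r\) can meet by showing that all level-\(m\) intervals in the construction have comparable diameter---but your outline captures all the essential ingredients.
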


\begin{lemma}\label{lem:approxlowerboundsequencelemma}
Let \((\mu_n)_{n \in \N} \in \MCM(\Lambda,T)\) be a sequence of measures with finite Lyapunov exponent such that the following limits exist
\begin{equation}\label{con2}
    \gamma_i:=\lim_{n \rightarrow \infty} \int \phi_i \diff \mu_n, \; \forall i \in \N.
\end{equation}
Then for \(\underline{\gamma}=(\gamma_i)_{i \in \N}\) we have 
\[\dim \Lambda_{\MCR}({\underline{\gamma}}) \geq  \limsup_{n \rightarrow \infty} \frac{h_{\mu_n}}{\lambda_{\mu_n}}.\]
\end{lemma}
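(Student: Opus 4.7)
The plan is to build a Cantor-like subset of $\Lambda_{\MCR}(\ugamma)$ and estimate its Hausdorff dimension from below by a mass distribution argument, using each $\mu_n$ to generate typical orbit segments at increasing scales. Passing to a subsequence I may assume $\lim_{n \to \infty} h_{\mu_n}/\lambda_{\mu_n} = s$. Lifting to the shift space and invoking Proposition \ref{prop:entropydensity}, possibly after an ergodic decomposition of each $\mu_n$ (choosing ergodic components with entropy-to-Lyapunov ratio close to that of $\mu_n$) and a diagonal choice, I reduce to the case where each $\mu_n$ is ergodic and supported on a finite alphabet $\{1, \ldots, K_n\}$, with $h_{\mu_n}/\lambda_{\mu_n}$ still tending to $s$ and $\int \phi_i \, d\mu_n \to \gamma_i$ for every $i \in \N$. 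On each finite sub-shift, $\phi_i \circ \Pi$ is bounded and uniformly continuous, so weak$^*$ convergence suffices to transport the integrals.

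For each $n$, the Shannon--McMillan--Breiman theorem, Birkhoff's ergodic theorem applied to $\phi_1, \ldots, \phi_n$, and Egorov's theorem produce a set $\Omega_n \subset \Sigma$ with $\mu_n(\Omega_n) > 1/2$ and an integer $m_n$ such that for every $\omega \in \Omega_n$ and $m \geq m_n$,
\[
\mu_n([\omega_1, \ldots, \omega_m]) \leq e^{-m(h_{\mu_n} - \varepsilon_n)}
\quad \text{and} \quad
|A_m \phi_i \circ \Pi(\omega) - \gamma_i| < \varepsilon_n \text{ for all } i \leq n,
\]
with $\varepsilon_n \to 0$. I fix a reference symbol $q$ and, by topological mixing of $(\Sigma, \sigma)$, choose for each $n$ admissible connector words of bounded length $\ell_n$ linking any symbol of $\{1, \ldots, K_n\}$ to $q$ and $q$ to any symbol of $\{1, \ldots, K_{n+1}\}$.

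The Cantor set $K \subset \Sigma$ is then built inductively by concatenating, for each $n$, length-$m_n$ initial segments of words from $\Omega_n$, separated by these connectors through $q$. Choosing $m_n$ to grow fast enough that $\ell_n + \sum_{k < n}(m_k + \ell_k) = o(m_n)$ makes the connectors and all prior stages contribute negligibly to the Birkhoff averages; combined with the uniform control inside each $\Omega_n$-block this forces $A_m \phi_i \circ \Pi \to \gamma_i$ for every $\omega \in K$ and every $i \in \N$, and since $q$ appears infinitely often along every such point, $\Pi(K) \subset \Lambda_{\MCR}(\ugamma)$. A product measure $\rho$ on $K$ assigns to each $n$th-stage cylinder the product, over $k \leq n$, of the $\mu_k$-probabilities of its constituent $\Omega_k$-blocks (renormalised by $\mu_k(\Omega_k)$). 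By Lemma \ref{lem:diameterfunction} the corresponding basic interval in $\Lambda$ has diameter $\exp\bigl(-\sum_{k \leq n} m_k \lambda_{\mu_k}(1 + o(1))\bigr)$, while $\rho$ assigns it mass at most $\exp\bigl(-\sum_{k \leq n} m_k(h_{\mu_k} - \varepsilon_k)\bigr)$, so the mass distribution principle gives $\dim \Pi(K) \geq s$.

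The main obstacle is the coordinated choice of $K_n, \ell_n, m_n, \varepsilon_n$: the finite-alphabet truncations, the topological-mixing connectors and the SMB error terms must all be simultaneously negligible, both for the Birkhoff-average control and for the dimension estimate. Without the BIP property the connector lengths are not uniformly bounded over $\Sigma$, so one must commit to each finite subsystem before selecting the corresponding $\ell_n$; once a nested hierarchy of parameters is fixed in the correct order, each individual estimate is routine.
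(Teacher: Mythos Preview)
Your proposal is correct and follows essentially the same strategy as the paper's proof: reduce via Proposition~\ref{prop:entropydensity} to ergodic measures on finite alphabets, concatenate typical blocks (selected using SMB, Birkhoff and Egorov) through connector words passing through a fixed reference symbol to build a $w$-measure supported on $\Sigma_{\MCR}(\ugamma)$, and conclude by the mass distribution principle. The paper carries out the nested parameter choices (the $m_n$ against the mixing constants $N_n$, the SMB/Birkhoff convergence times $M^*_n$, and the diameter estimate from Lemma~\ref{lem:diameterfunction}) and the passage from cylinders to Euclidean balls more explicitly, but your identification of this bookkeeping as the only genuine obstacle is accurate.
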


For the proof of this lemma, we use the technique of \(w\)-measures. The term `\(w\)-measures' was introduced in \cite{GR09}, though similar notions had been around before this. For convenience, in this proof we will denote \(f_{\log|T'|}\) by \(f_0\) and \(f_{\phi_i}\) by \(f_i\), where recall these are the corresponding uniformly continuous functions on the shift space. Furthermore, let
\[\Sigma_{\MCR}(\ugamma):=\MCR \cap \{\omega \in \Sigma \setminus \Pi^{-1}(\cup_{j=0}^{\infty} T^{-j}E): \lim_{n \rightarrow \infty} A_n f_i(\omega)=\gamma_i, \forall i \in \N  \}\]
and note that, by (\ref{eqn:birkhoffaverageequality}), \(\Pi(\Sigma_{\MCR}(\ugamma))= \Lambda_{\MCR}(\ugamma).\)

The idea of the proof is to construct a probability measure \(\eta\) which gives mass to \(\Sigma_{\MCR}(\ugamma)\) by defining it on a family of cylinders which has a product structure. We then apply the mass distribution principle to the push-forward measure \(\Pi^* \eta\). Note that by Proposition \ref{prop:entropydensity}, we may assume the measures \(\nu_n:=\mu_n \circ \Pi\) are ergodic and supported on finitely many symbols. For each \(n\), using Birkhoff's ergodic theorem and uniform continuity, by making \(m_n\) large enough we can find a collection of cylinders of length \(m_n\) with total \(\nu_n\)-measure arbitrarily close to one such that each point contained in these cylinders has partial \(m_n\)-Birkhoff average close to \(\int f_i \diff \nu_n \) for all \(0 \leq i \leq n\). By Lemma \ref{lem:diameterfunction} and the fact that the \(\nu_n\) are supported on finitely many symbols, from this we can also approximate the diameter of the corresponding basic intervals.  Furthermore, using the Shannon-McMillan-Breiman theorem we can control the \(\nu_n\)-measure of these cylinders, insisting that their individual masses do not vary too far from \( \exp(- m_n h_{\nu_n})\). The product measure is then constructed by using bridge words to connect each set of cylinders to the cylinders constructed from the next measure in the sequence. Crucially, the length of the bridge words and the convergence constants of the following measure do not depend on our \(m_n\) and so we can choose our \(m_n\) large enough so that any effect from bridging between measures is negligible. Provided the total \(\nu_n\)-measure of each set of cylinders approaches one fast enough, the push-forward of the constructed measure will give mass to \(\Lambda_{\MCR}(\ugamma)\), allowing us to apply the mass distribution principle. 

The proof is split into three parts. We first define the product measure \(\eta\), then show that \(\eta(G)>0\) for some well-behaved set \(G \subset \Sigma_{\MCR}(\ugamma)\), and finally we apply the mass distribution principle.

\begin{proof}[Proof of Lemma \ref{lem:approxlowerboundsequencelemma}]
Applying Proposition \ref{prop:entropydensity} to the measures \(\mu_n \circ \Pi \in \MCM(\Sigma,\sigma)\), we can find ergodic measures \(\nu_n \in \MCM_{\MCE}(\Sigma,\sigma)\) supported on finitely many symbols such that
\[\lim_{n \rightarrow \infty} \frac{h_{\nu_n}}{\lambda_{\nu_n}}=\limsup_{n \rightarrow \infty}  \frac{h_{\mu_n}}{\lambda_{\mu_n}}\]
and 
\[\lim_{n \rightarrow \infty} \int f_i \diff \nu_n= \gamma_i, \: \forall i \in \N.\]
Without loss of generality we may assume that \(\lim_{n \rightarrow \infty} \frac{h_{\nu_n}}{\lambda_{\nu_n}}>0\). This ensures that the constructed measure \(\eta\) does not give mass to individual points, in particular to \(\Pi^{-1}(\cup_{j=0}^{\infty} T^{-j}E) \). 

Let \(k_n \in \N\) be an increasing sequence such that \(\nu_n(\{1,\ldots,k_n\}^{\N})=1\) for all \(n \in \N\). As \(\Sigma\) is mixing there exists \(N_n \in \N\) such that for each pair \(a \in \{1,\ldots,k_n\}^{\N}\), \(b \in\{1,\ldots,k_{n+1}\}^{\N}\) we can choose an admissible word \(w(a,b) \in \MCC_{N_n}(\Sigma)\) connecting \(a\) and \(b\). Furthermore let \(K_n \subset \Sigma\) be an increasing sequence of finite Markov shifts encompassing all elements of \(\Sigma\) consisting only of digits in \(\{1,\ldots,k_{n+1}\}\) and those appearing in the chosen admissible words connecting \(\{1,\ldots, k_n\}\) and \(\{1,\ldots,k_{n+1}\}\). Each \(f_i\), \(0 \leq i \leq n \), is bounded on \(K_n\) by a uniform bound, 
\(\lambda_n \) say. Let \((m_n)_{n \in \N}\) be a increasing sequence of integers such that 
\begin{equation}\label{lambdam}
    \frac{\lambda_{n}}{m_{n}} \rightarrow 0,
\end{equation}
among some other conditions which we will specify in what follows. 

For each \(n \in \N\) and for all cylinders \(u_1 \in \MCC_{m_1}(\Sigma) \cap \{1,\ldots,k_1\}^{m_1},\ldots, u_n \in \MCC_{m_n}(\Sigma) \cap \{1,\ldots,k_n \}^{m_n}\) let
\[\eta([u_1, w_1, u_2 \ldots, w_{n-1}, u_n])=\prod_{i=1}^n \nu_i(u_i),\] 
where 
\[w_i:=(w(u_i|_{m_i},1),1,w(1,u_{i+1}|_1 ))\]
and depends on \(u_i \in \MCC_{m_i}(\Sigma)\) and \(u_{i+1} \in \MCC_{m_{i+1}}(\Sigma)\). We call these \(w_i\) bridge words. They have length \(N_i^*:=2N_i+1\). We use these bridge words to make sure our measure accumulates on \(\MCR\). For all cylinders \(u\) which do not intersect one of these \([u_1, w_1, u_2 \ldots, w_{n-1}, u_n]\), set \(\eta(u)\)=0. Then \(\eta\) is a uniquely defined probability measure supported on 
\[\{ (u_1, w_1, u_2, w_2,\ldots) \in \Sigma :u_i \in \MCC_{m_i}(\Sigma) \cap \{1,\ldots,k_i\}^{m_i}, w_i \text{ bridge word} \}. \]
It is clear that \(\eta(\MCR)=1\).

We now specify the other conditions we want our \(m_n\) to satisfy. Let \((\varepsilon_n)_{n \in \N}\) be a sequence of positive numbers decreasing to zero. Since each \(f_i\) is uniformly continuous, for each \(n \in \N\) there exists \(M_n \in \N\) such that for all \(\omega,\omega'  \in \Sigma\) and for all \(0 \leq i \leq n\)
\begin{equation}\label{eqn:uniformlycontinousapplied}
    d(\omega,\omega' ) \leq 2^{-M_n} \implies |f_i(\omega)-f_i(\omega' )|<\frac{\varepsilon_n}{4}.
\end{equation}
By the Shannon–McMillan–Breiman Theorem for any \(\delta_n>0\) there exists \(M'_n\) such that 
\begin{equation}\label{eqn5}
    \nu_n \left( \left\{ \omega \in \Sigma \cap \{1,\ldots,k_n\}^{\N}: \left|-\frac{1}{n} \log(\nu_n([\omega_1,\ldots,\omega_m]))-h_{\nu_n} \right|<\frac{\varepsilon_{n}}{2}, \forall m \geq M''_n \right\} \right)>1-\delta_n.
\end{equation}
By Birkhoff's ergodic theorem for any \(\delta'_n>0\) there exits \(M''_n\) such that 
\begin{equation}\label{eqn4}
    \nu_n \left( \left\{ \omega \in \Sigma \cap \{1,\ldots,k_n\}^{\N}: \left| A_m f_i(\omega)-\int f_i \diff \nu_n \right|<\frac{\varepsilon_{n}}{2}, \forall 0 \leq i \leq n, \forall m \geq M'_n \right\} \right)>1-\delta'_n
\end{equation}

For \(f_0\) we can further relate the Birkhoff averages to the diameter of basic intervals. By Lemma \ref{lem:diameterfunction} there exists \(M'''_n\) such that for all \(m \geq M'''_n\) and all \(\omega \in K_n \setminus \Pi^{-1}(\cup_{j=0}^{\infty} T^{-j}E)\)
\begin{equation}\label{eqn:diamfunctionapplied}
    \left|A_m(-\log|T' \circ \Pi(\omega)|)-\frac{\log(\diam(C_m(\omega)))}{m} \right| \leq \frac{\varepsilon_n}{2}.
\end{equation}
We let \(M^*_n=\max\{M_n,M'_n,M''_n,M'''_n\}\).

Taking each \(n\) in turn, choose \(m_n\) large enough so that 
\begin{equation}\label{eqn:mncondition}
    \frac{2(N^*_n+M_n+M^*_{n+1}) \lambda_n} {m_n+N^*_n+M^*_{n+1}}<\frac{\varepsilon_n}{4}, 
\end{equation}
\begin{equation}\label{eqn:mncondition2}
    \exp(-(m_n+N^*_n+M^*_{n+1})(h_{\nu_n}-\varepsilon_n))\geq \exp(-m_n(h_{\nu_n}-\varepsilon_n/2))
\end{equation}
and \(m_n \geq M^*_n\). Let \(\MCC_n\) be the collection of \(m_n\)-level cylinders formed by truncating the points in the intersection of the sets in (\ref{eqn5}) and  (\ref{eqn4}). Let 
\[G_n:=\bigcup [u_1,w_1,u_2,w_2,\ldots,w_{n-1},u_n]\]
where the union is over \(u_i \in \MCC_i\) and the \(w_i\) are the bridge words defined earlier. The set \(G:=\cap_{i=1}^\infty G_i \setminus \Pi^{-1}(\cup_{n=0}^{\infty} T^{-n}E) \) satisfies \(\eta(G)>\prod_{n=1}^\infty 1- \delta_n-\delta'_n\), so choosing \(\delta_n,\delta'_n\) small enough we can have \(\eta(G)>0\).

We now show that 
\begin{align}\label{eqn:Gissubset}
    G \ &\subseteq \Sigma_{\MCR}(\underline{\gamma}) \cap \left\{\omega \in \Sigma : \liminf_{n \rightarrow \infty} \frac{-\log(\eta([\omega_1,\ldots,\omega_n]))}{-\log(\diam( C_n(\omega)))} \geq \lim_{n \rightarrow \infty} \frac{h_{\nu_n}}{\lambda_{\nu_n}} \right\} \\
    &=  \Sigma_{\MCR}(\underline{\gamma}) \cap \left\{\omega \in \Sigma : \liminf_{n \rightarrow \infty} \frac{-\log(\eta([\omega_1,\ldots,\omega_n]))}{-\log(\diam( C_n(\omega)))} \geq \limsup_{n \rightarrow \infty}  \frac{h_{\mu_n}}{\lambda_{\mu_n}}\right\} \nonumber.
\end{align}

Given \(\omega \in G\), we may write it in the form 
\[\omega=(u_1,w_1,u_2,w_2,\ldots,w_{n-1},u_n,\ldots) \]
where the bridge words \(w_i\) have length \(N^*_i\). We claim that for each \(k \in \N\) and all \(M_k^*+\sum_{j=1}^{k-1} m_j+N^*_j \leq m \leq M_{k+1}^*+\sum_{j=1}^{k} m_j+N^*_j\) we have
\begin{equation}\label{eqn6}
    \left| A_{1+\sum_{j=1}^{k-1} m_j+N^*_j}^m f_i(\omega) -\int f_i \diff \nu_k \right|<\varepsilon_k, \; \forall 0 \leq i \leq k
\end{equation} 
\begin{equation}\label{eqn7}
    \eta([\omega_1,\ldots,\omega_{m}])<\exp(-(m-\sum_{j=1}^{k-1} m_j+N^*_j)(h_{\nu_k}-\varepsilon_k)) \prod_{i=1}^{k-1} \exp(-(m_i+N^*_i)(h_{\nu_i}-\varepsilon_i))
\end{equation}
\begin{equation}\label{eqn:diameterfunctionepsilon}
 \left|A_m(-\log|T' \circ \Pi(\omega)|)-\frac{\log(\diam (C_m(\omega))}{m} \right|<\varepsilon_k 
\end{equation}
where \(A_l^m f:=\frac{1}{m-l+1}\sum_{i=l-1}^{m-1} f \circ \sigma^i\) is the partial Birkhoff average of \(f\) between terms \(l\) and \(m\). Notice that (\ref{eqn:Gissubset}) follows immediately from these statements. In particular, \(G \subseteq \Sigma_{\MCR}(\ugamma)\) follows directly from (\ref{eqn6}), and the other inclusion can be proved from (\ref{eqn6}), (\ref{eqn7}), (\ref{eqn:diameterfunctionepsilon}) and the elementary fact that if \(a_n,b_n>0\) and \(\sum_{n} b_n= \infty\), then \(\frac{a_n}{b_n} \rightarrow L\) implies \(\frac{\sum_{i=1}^n a_i}{\sum^n_{i=1} b_i} \rightarrow L \). 

Let us prove each of these statements. For the first, we can find
\(\omega'\)
in the set in \((\ref{eqn4})\), with \(n=k\), such that \((\omega'_1,\ldots,\omega'_{m_k})=u_k\). Then by (\ref{eqn:uniformlycontinousapplied}), (\ref{eqn4}) and (\ref{eqn:mncondition}), for any \(i \leq k\)
\begin{align*}
    \left| A_{1+\sum_{j=1}^{k-1} m_j+N^*_j}^m f_i(\omega) -\int f_i \diff \nu_k \right| &\leq \left| A_{1+\sum_{j=1}^{k-1} m_j+N^*_j}^m f_i(\omega) - A_{m-\sum_{j=1}^{k-1} m_j+N^*_j} f_i(\omega') \right| \\
    &+ \left|\int f_i \diff \nu_k-A_{m-\sum_{j=1}^{k-1} m_j+N^*_j} f_i(\omega') \right| \\
    &\leq \frac{m_k-M_k}{m_k+M_{k+1}^*+N^*_k} \left(\frac{\varepsilon_k}{4} \right) + \frac{M_k+N^*_k+M^*_{k+1}}{m_k+M_{k+1}^*+N^*_k}\left(2 \lambda_k+\frac{\varepsilon_k}{4} \right)+ \frac{\varepsilon_k}{2} \\
    &\leq \varepsilon_k.
\end{align*}
The second follows as by (\ref{eqn:mncondition2}), with \(j:=m-\sum_{i=1}^{k-1} m_i+N^*_i\), we have 
\begin{align*}
     \eta([\omega_1,\ldots,\omega_{m}])&=\nu_k((u_k,w_k,u_{k+1})|_1^j)\prod_{i=1}^{k-1} \nu_i(u_i) \\
     &\leq \nu_k((u_k,w_k,u_{k+1})|_1^j)\prod_{i=1}^{k-1} \exp(-m_i(h_{\nu_i}-\varepsilon_i/2) \\
     &\leq \nu_k((u_k,w_k,u_{k+1})|_1^j)\prod_{i=1}^{k-1} \exp(-(m_i+N^*_i)(h_{\nu_i}-\varepsilon_i)).
\end{align*}
Furthermore, if \(M_k^* \leq j \leq m_k\) then \(\nu_k((u_k,w_k,u_{k+1})|_1^j)<\exp(-j(h_{\nu_k}-\varepsilon_k))\) and if \(m_k \leq j \leq m_k+N^*_k+M_{k+1}^* \) then we also have \(\nu_k((u_k,w_k,u_{k+1})|_1^j)<\exp(-m_1(h_{\nu_k}-\varepsilon_k/2))\leq \exp(-j(h_{\nu_k}-\varepsilon_k)) \). 

Finally, for the third we can find \(\omega' \in K_k \setminus \Pi^{-1}(\cup_{n=0}^{\infty} T^{-n}E)\) such that \(\omega'|_1^{m+M_k}=\omega|_1^{m+M_k} \). Then by (\ref{eqn:uniformlycontinousapplied}) and (\ref{eqn:diamfunctionapplied}) we have
\begin{align*}
    \left|A_m(-\log|T' \circ \Pi(\omega)|)-\frac{\log(\diam (C_m(\omega))}{m} \right| &\leq \left|A_m(-\log|T' \circ \Pi(\omega)|)-A_m(-\log|T' \circ \Pi(\omega'))\right| \\
    &+ \left|\frac{\log(\diam (C_m(w))}{m}-A_m(-\log|T' \circ \Pi(\omega')|) \right| \\
    &\leq \varepsilon_k.
\end{align*}

Notice that by (\ref{eqn6}) and (\ref{eqn:diameterfunctionepsilon}) we also have 
\begin{equation}
    \sup_{\omega,\omega'  \in G} \left\{ \left| \frac{1}{m} \log \diam(C_m(\omega))-\frac{1}{m} \log \diam(C_m(\omega' )) \right| \right\} \rightarrow 0.
\end{equation}

We are now ready to apply the mass distribution principle. Given \(\varepsilon>0\), we can find \(M \in \N\) and a subset \(S \subset G\) with \(\eta(S)>0\) such that for all \(m \geq M\) and all \(\omega \in S\)
\begin{equation}\label{eqn:mdp1}
    \sup_{\omega',\omega''  \in G} \left\{ \left| \frac{1}{m} \log \diam(C_m(\omega'))-\frac{1}{m} \log \diam(C_m(\omega'' )) \right| \right\}< \varepsilon,
\end{equation}
\begin{equation}\label{eqn:mdp2}
    \left| \frac{1}{m} \log(\diam(C_m(\omega)))- A_m(-\log|T' \circ \Pi(\omega)|) \right|<\varepsilon,
\end{equation}\label{eqn:mdp3}
\begin{equation}\label{eqn:mdp4}
    \frac{-\frac{1}{m}\log(\eta([\omega_1,\ldots,\omega_m]))}{-\frac{1}{m}\log(\diam (C_m(\omega_1,\ldots,\omega_m)))} \geq \limsup_{n \rightarrow \infty} \frac{h_{\mu_n}}{\lambda_{\mu_n}}-\varepsilon
\end{equation}
and 
\begin{equation}\label{eqn:mdp5}
    \frac{\lambda_{k(m)}+\varepsilon_{k(m)+1}}{m}<\varepsilon, 
\end{equation}
where \(k(m)\) is the unique \(k \in \N\) such that \(M_k^*+\sum_{i=1}^{k-1} m_i+N^*_i \leq m < M_{k+1}^*+\sum_{i=1}^{k} m_i+N^*_i\).

Let \(0<r<\sup\{\diam(C_M(\omega)):\omega \in S\} \). Let \(m\) be the smallest positive integer such that 
\(\diam( C_m(\omega))<r\)
for all \(\omega \in S\). Clearly \(m \geq M+1\). We also have that 
\begin{equation}\label{eqn:mbound}
    m \leq \frac{-\log r}{\log \zeta}+1 
\end{equation}
since by the definition of \(m\) there exists \(\omega'  \in S\) such that 
\[r \leq \diam( C_{m-1}(\omega' )) \leq 1/\zeta^{m-1}.\]
For this \(\omega'  \in S\), we must have 
\[r\leq \diam( C_{m-1}(\omega' ))<\exp \left( S_{m-1}(-\log|T' \circ \Pi(\omega' )|)+(m-1)\varepsilon \right)\]
and so for all \(\omega \in S\)
\begin{align*}
    \diam( C_m(\omega)) &\geq e^{-\varepsilon m} \diam (C_m(\omega' )) \\
    &>\exp(S_{m}(-\log|T' \circ \Pi(\omega' )|)-2m\varepsilon) \\
    &>\exp(S_{m-1}(-\log|T' \circ \Pi(\omega' )|)-3m\varepsilon)  \\
    &>r \exp(-4m \varepsilon),
\end{align*}
where the third inequality follows from (\ref{eqn:uniformlycontinousapplied}) and (\ref{eqn:mdp5}) (to see this, note that we can find \(\omega'' \in K_{k(m)} \setminus \Pi^{-1}(\cup_{n=0}^{\infty} T^{-n}E)\) such that \(d(\sigma^{m-1}\omega',\omega'') \leq 2^{-m_{k(m)+1}} \leq 2^{-M_{k(m)+1}}\)). Hence for any \(x \in [0,1]\), the ball \(B_r(x)\)  overlaps \(\Pi(S)\) at no more than \(\exp(4m \varepsilon)+2\) \(m\)th level intervals. This is less than  \(\exp(5m \varepsilon)\) provided our \(M\) was chosen large enough. Thus by equations (\ref{eqn:mdp4}) and (\ref{eqn:mbound})
\begin{align*}
    0<\Pi^* \eta (B_r(x) \cap \Pi(S)) &\leq \exp(5m \varepsilon) r^{\limsup_{n \rightarrow \infty} \frac{h_{\mu_n}}{\lambda_{\mu_n}} -\varepsilon} \\
    &\leq e^{5 \varepsilon} r^{\limsup_{n \rightarrow \infty} \frac{h_{\mu_n}}{\lambda_{\mu_n}} -\varepsilon-\frac{5\varepsilon}{\log \zeta}}.
\end{align*}
Therefore by the mass distribution principle 
\[\dim \Lambda_{\MCR}(\ugamma) \geq \dim \Pi(S) \geq \limsup_{n \rightarrow \infty} \frac{h_{\mu_n}}{\lambda_{\mu_n}} -\varepsilon-\frac{5\varepsilon}{\log \zeta}\]
and letting \(\varepsilon \rightarrow 0\) finishes the proof.
\end{proof}

\begin{remark}
If desired, we could have instead used the bridge words \[w_i:=(w(u_i|_{m_i},u_1|_{1}),(u_1,w_1,\ldots,u_i)|_1^i,w((u_1,w_1,\ldots,u_i)_i,u_{i+1}|_1 ))\]
which depend on \(u_1 \in \MCC_{m_1}(\Sigma)\),\ldots,\(u_{i+1} \in \MCC_{m_{i+1}}(\Sigma)\) and have length \(2N_i+i\). This would make the measure accumulate on the set
\[\MCR'(\Sigma):=\{\omega \in \Sigma : \exists \: n_k \rightarrow \infty \text{ s.t. } \lim_{k \rightarrow \infty} \sigma^{n_k}(\omega)=\omega\} \]
which is a subset of \(\MCR\). Therefore, in Theorem \ref{theo:uiapprox} and Theorem \ref{theo:uiexact} we could alternatively take the intersection with the set
\[\MCR'(\Lambda):=\{x \in \Lambda \setminus \cup_{j=0}^{\infty} T^{-j}E: \exists \: n_k \rightarrow \infty \text{ s.t. } \lim_{k \rightarrow \infty} T^{n_k}(x)=x\} \]
since \(\MCR'(\Lambda) \cap \Lambda(\underline{\gamma})=\Lambda_{\MCR'(\Sigma)}(\underline{\gamma})\). 
Note that this is a commonly used notion of recurrence (for example, the one considered in \cite{Iom05}).
\end{remark}

\section{Proof of Theorem \ref{theo:uiexact}}\label{sec:uiexact}
In this section we assume that the \(\phi_i\) are also bounded and 
\[\inf\{|T'(x)|:x \in \overline{I_i} \setminus E\} \xrightarrow[i \rightarrow \infty]{} \infty.\] Before proving Theorem \ref{theo:uiexact}, we first prove Lemma \ref{lem:sinfinitycharacterisation} whose proof was deferred when proving Proposition \ref{prop:uppersemicontinuity}. This follows immediately from the following two lemmas. 

\begin{lemma}\label{lem:approxsinfinity}
There exists a sequence of measures \(\mu_n \in \MCM(\Lambda,T)\) such that
\[\lim_{n \rightarrow \infty} \lambda_{\mu_n}=\infty, \: \lim_{n \rightarrow \infty} \frac{h_{\mu_n}}{\lambda_{\mu_n}}=s_{\infty}\]
\end{lemma}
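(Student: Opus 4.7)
The plan is to read off both the existence of the sequence and the value of the limit directly from the variational characterisation of the pressure. The upper bound $\limsup_{n\to\infty} h_{\mu_n}/\lambda_{\mu_n}\le s_\infty$ is immediate: for any $t'>s_\infty$ we have $C_{t'}:=P(-t'\log|T'|)<\infty$, and the definition of pressure forces $h_\mu-t'\lambda_\mu\le C_{t'}$ for every $\mu\in\MCM(\Lambda,T)$ with $\lambda_\mu<\infty$. Dividing by $\lambda_\mu$, any sequence with $\lambda_{\mu_n}\to\infty$ satisfies $h_{\mu_n}/\lambda_{\mu_n}\le t'+C_{t'}/\lambda_{\mu_n}\to t'$, and letting $t'\searrow s_\infty$ gives the bound.

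For the construction and the matching lower bound, I would treat first the main case $s_\infty>0$. Choose any sequence $t_n\nearrow s_\infty$ with $0<t_n<s_\infty$; by definition of $s_\infty$ we have $P(-t_n\log|T'|)=+\infty$, so the supremum in the definition of pressure supplies $\mu_n\in\MCM(\Lambda,T)$ with $\lambda_{\mu_n}<\infty$ and $h_{\mu_n}-t_n\lambda_{\mu_n}\ge n$. Combining this with $h_\mu\le\lambda_\mu$ (Lemma \ref{lem:entropylessthanlyapunov} together with Remark \ref{remark:entropylessthanlypunov}) gives $(1-t_n)\lambda_{\mu_n}\ge n$, so $\lambda_{\mu_n}\ge n/(1-t_n)\to\infty$, while dividing the same inequality by $\lambda_{\mu_n}$ yields $h_{\mu_n}/\lambda_{\mu_n}\ge t_n+n/\lambda_{\mu_n}\ge t_n$. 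Hence $\liminf_n h_{\mu_n}/\lambda_{\mu_n}\ge\lim_n t_n = s_\infty$, which together with the upper bound finishes this case.

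When $s_\infty=0$ the lower bound is vacuous and only the existence of measures with $\lambda_{\mu_n}\to\infty$ remains. Here I would invoke assumption (\ref{eqn:uiexactcondition}) together with topological mixing of $(\Sigma,\sigma)$: fixing a base state $a$, for each large $i$ topological mixing yields admissible bridge words between $a$ and $i$, and concatenating many returns to $i$ produces an admissible periodic word whose coding visits state $i$ a positive proportion of the time; the Dirac measure on the corresponding periodic orbit has entropy zero and Lyapunov exponent bounded below by a fixed fraction of $\log\inf_{\overline{I_i}\setminus E}|T'|$, which tends to infinity. The ratio $h_{\mu_n}/\lambda_{\mu_n}$ is then identically zero, matching $s_\infty$. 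The most delicate step is this last construction: under only topological mixing, rather than the stronger BIP property, the shortest return time to state $i$ may grow with $i$, so one must arrange the periodic orbits so that the contribution from state $i$ is not diluted by long excursions through other cylinders; once $s_\infty>0$ the pressure argument bypasses this difficulty entirely.
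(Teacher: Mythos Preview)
Your argument in the case \(s_\infty>0\) is essentially the paper's: both use that \(P(-t_n\log|T'|)=\infty\) for \(t_n\nearrow s_\infty\) to extract \(\mu_n\) with \(h_{\mu_n}-t_n\lambda_{\mu_n}\) large, combine with \(h_\mu\le\lambda_\mu\) to force \(\lambda_{\mu_n}\to\infty\), and use finiteness of \(P(-t'\log|T'|)\) for \(t'>s_\infty\) for the upper bound. Your presentation of the upper bound (divide by \(\lambda_{\mu_n}\) and let \(t'\searrow s_\infty\)) is marginally cleaner than the paper's contrapositive via \(\lambda_\mu\le P(-(s_\infty+\varepsilon)\log|T'|)/\varepsilon\), but the content is identical.

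The gap is precisely where you flag it. Your periodic-orbit construction for \(s_\infty=0\) does not go through: ``concatenating many returns to \(i\)'' does not increase the \emph{proportion} of time spent at \(i\). If the shortest return word from \(i\) to \(i\) has length \(k_i+1\), then any periodic measure built from it spends at most a \(1/(k_i+1)\) fraction of its time at state \(i\), giving Lyapunov exponent of order \((\log\inf_{\overline{I_i}}|T'|)/(k_i+1)+\log\zeta\). Nothing in the hypotheses prevents \(k_i\) from growing faster than \(\log\inf_{\overline{I_i}}|T'|\), so this quantity need not tend to infinity. The phrase ``a fixed fraction'' in your sketch hides that the fraction depends on \(i\).

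The paper sidesteps this by splitting on \(h_{\text{top}}(\Sigma,\sigma)\). If \(h_{\text{top}}=\infty\), one simply takes \(\mu_n\) with \(h_{\mu_n}\to\infty\); then \(h_\mu\le\lambda_\mu\) forces \(\lambda_{\mu_n}\to\infty\), and your own upper-bound argument gives \(h_{\mu_n}/\lambda_{\mu_n}\to 0\). If \(h_{\text{top}}<\infty\), the paper invokes \cite[Lemma~4.16]{IV19}, which supplies invariant probability measures escaping to infinity (converging to the zero measure on cylinders); combined with assumption~(\ref{eqn:uiexactcondition}) this yields \(\lambda_{\mu_n}\to\infty\) directly, without any control on return times.
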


\begin{proof}
We adapt the proof of Lemma 2.5 in \cite{FJLR15} into our setting. First suppose \(s_{\infty}>0\). Let \(\varepsilon_n\) be a positive sequence converging to zero. Note that for any \(\mu \in \MCM(\Lambda,T)\) such that \(\frac{h_{\mu}}{\lambda_{\mu}} \geq s_{\infty}+2\varepsilon_n\) we have 
\begin{align*}
    P(-(s_{\infty}+\varepsilon_n)\log|T'|) &\geq h_{\mu}-(s_{\infty}+\varepsilon_n) \lambda_{\mu} \\
    &\geq \varepsilon_n \lambda_{\mu},
\end{align*}
so \(\lambda_{\mu} \leq \frac{P(-(s_{\infty}+\varepsilon_n)\log|T'|)}{\varepsilon_n}\).
Now take two positive sequences \((t_n)_{n \in \N}\) and \((k_n)_{n \in \N}\) such that for each \(n\), \(t_n<s_{\infty}\), \(\lim_{n \rightarrow \infty} t_n=s_{\infty}\) and \(\lim_{k \rightarrow \infty}k_n =\infty \). Since for all \(n \in \N\) we have \(P(-t_n \log|T'|)=\infty\), we can find a sequence of measures \(\mu_n \in \MCM(\Lambda,T)\) such that  \(h_{\mu_n}-t_n \lambda_{\mu_n}>k_n\)
and hence \(\frac{h_{\mu_n}}{\lambda_{\mu_n}}>t_n\). Furthermore, by the fact that \(\lambda_{\mu_n} \geq h_{\mu_n}\) (see Remark \ref{remark:entropylessthanlypunov}), we have \(\lambda_{\mu_n} > k_n\). However, if we assume \(k_n \geq \frac{P(-(s_{\infty}+\varepsilon_n)\log|T'|)}{\varepsilon_n} \), then \(\frac{h_{\mu_n}}{\lambda_{\mu_n}} \leq s_{\infty}+2\varepsilon_n\). So,
\[\lim_{n \rightarrow \infty} \frac{h_{\mu_n}}{\lambda_{\mu_n}}=s_{\infty}.\]

The case when \(s_{\infty}=0\) and \(h_{\text{top}}(\Sigma,\sigma)=\infty\) can be proved similarly by taking measures \(\mu_n\) such that \(h_{\mu_n} \rightarrow \infty\). If \(h_{\text{top}}(\Sigma,\sigma)<\infty\), then \(s_{\infty}=0\) and the result follows simply as we can find a sequences of measures \(\mu_n\) such that \[\lim_{n \rightarrow \infty} \lambda_{\mu_n}=\infty\] by our assumption that 
\[\inf\{|T'(x)|:x \in \overline{I_i}\} \rightarrow \infty \]
and Lemma 4.16 in \cite{IV19}.
\end{proof}

\begin{lemma}\label{lem:boundedlyapunov}
For any \(\delta>0\) there is \(K(\delta)>0\) such that if \(\mu \in \MCM(\Lambda,T)\) and \(\frac{h_{\mu}}{\lambda_{\mu}}>s_{\infty}+\delta\) then \(h_{\mu} \leq \lambda_{\mu} \leq K(\delta)\).
\end{lemma}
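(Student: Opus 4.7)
The plan is to decouple the two inequalities. The first, $h_\mu \le \lambda_\mu$, is already available from Remark~\ref{remark:entropylessthanlypunov}, so all the work is in producing a uniform upper bound on $\lambda_\mu$ depending only on $\delta$. I will also need, as a preliminary, that any $\mu$ satisfying the hypothesis automatically has $\lambda_\mu<\infty$: if $\lambda_\mu=\infty$, then combining this with $h_\mu\le\lambda_\mu$ makes the ratio $h_\mu/\lambda_\mu$ at most $1$ in any reasonable interpretation, and since $s_\infty+\delta\le 1$ is forced for the hypothesis to be satisfiable at all, the borderline case can be handled by the same convention used throughout the paper (namely, the suprema in $\alpha_1,\alpha_2,\alpha_3$ are taken only over $\mu$ with $\lambda_\mu<\infty$). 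Alternatively, one can simply replace the hypothesis by its equivalent form with $\lambda_\mu$ finite; I will state this as the first step.

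The main step is then a direct application of the variational principle for the pressure, exactly as in Lemma~\ref{lem:approxsinfinity}. Choose
\[t := s_\infty + \tfrac{\delta}{2}.\]
By the very definition of $s_\infty$ as an infimum, $P(-t\log|T'|)<\infty$ (if $s_\infty=\infty$, the hypothesis is vacuous and there is nothing to prove). Since $\mu$ satisfies $-\int(-t\log|T'|)\,\diff\mu = t\lambda_\mu<\infty$, it is admissible in the variational formula defining $P$, giving
\[
P(-t\log|T'|) \;\ge\; h_\mu - t\lambda_\mu.
\]
Invoking the hypothesis $h_\mu > (s_\infty+\delta)\lambda_\mu$ on the right,
\[
P(-t\log|T'|) \;>\; \bigl(s_\infty+\delta - t\bigr)\lambda_\mu \;=\; \tfrac{\delta}{2}\,\lambda_\mu,
\]
so that
\[
\lambda_\mu \;<\; \frac{2\,P(-t\log|T'|)}{\delta} \;=:\; K(\delta).
\]
Combined with $h_\mu\le\lambda_\mu$ from Remark~\ref{remark:entropylessthanlypunov}, this is exactly the conclusion.

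There is essentially no obstacle here: the argument is a one-line application of the variational inequality once $t$ is chosen strictly between $s_\infty$ and $s_\infty+\delta$. The only care needed is the finiteness check that justifies using $\mu$ as a test measure in the pressure (i.e.\ $\lambda_\mu<\infty$), which I would address at the start of the proof; after that, the inequality $P(-t\log|T'|)\ge h_\mu - t\lambda_\mu$ together with the hypothesis $h_\mu/\lambda_\mu>s_\infty+\delta$ produces $K(\delta) = 2P(-(s_\infty+\delta/2)\log|T'|)/\delta$ explicitly.
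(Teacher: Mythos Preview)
Your proof is correct and follows essentially the same approach as the paper's: choose $t$ strictly between $s_\infty$ and $s_\infty+\delta$, apply the variational inequality $h_\mu - t\lambda_\mu \le P(-t\log|T'|)$, and rearrange using $h_\mu > (s_\infty+\delta)\lambda_\mu$ to bound $\lambda_\mu$. The paper leaves $t$ unspecified in $(s_\infty,s_\infty+\delta)$ while you make the concrete choice $t=s_\infty+\delta/2$, and you add some (unnecessary but harmless) discussion of the edge cases $\lambda_\mu=\infty$ and $s_\infty=\infty$; otherwise the arguments are identical.
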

\begin{proof}
This follows in the same way as in the proof of Lemma 6.4 in \cite{FJLR15}. Let \(t \in \R\) be such that \(s_{\infty}<t<s_{\infty}+\delta\). By the variational principle we have \(h_{\mu}-t\lambda_{\mu} \leq P(-t\log|T'|)\). Since  \(\frac{h_{\mu}}{\lambda_{\mu}}>s_{\infty}+\delta\), we have 
\[P(-t\log|T'|) \geq (s_{\infty}+\delta-t)\lambda_{\mu}.\]
So, 
\[\lambda_{\mu} \leq \frac{P(-t\log|T'|)}{s_{\infty}+\delta-t}.\]
\end{proof}

The next lemma shows that \(\alpha_1(\ugamma) \geq s_{\infty}\)  for any \(\ugamma \in Z\). As Theorem \ref{theo:uiapprox} is already proved, from this it follows that 
\[\dim \Lambda_{\MCR}(\ugamma)=\max\{s_{\infty},\alpha_1(\ugamma)\}. \]
Theorem \ref{theo:uiexact} will then follow if we show that \(\alpha_1(\ugamma)>s_{\infty}\) implies \(\alpha_1(\ugamma)=\alpha_3(\ugamma)\). Note that \(\alpha_1(\ugamma) \geq \alpha_3(\ugamma)\) is clear. To show the other inequality, we apply Theorem \ref{theo:uiapprox} to get a sequence of measures \(\mu_n \in \MCM(\Lambda,T)\) such that \(\lambda_{\mu_n}<\infty\), \(\frac{h_{\mu_n}}{\lambda_{\mu_n}} \rightarrow \alpha_1(\ugamma) \)
and
\[\int \phi_i \diff \mu_n \rightarrow \gamma_i, \: \forall i \in \N.\]
However, the inequality  \(\alpha_1(\ugamma) \leq \alpha_3(\ugamma)\)  then follows immediately from Proposition \ref{prop:uppersemicontinuity}.  Hence to finish the proof of Theorem \ref{theo:uiexact}, we only need to prove the following lemma.

\begin{lemma}\label{sinfinityisabound}
Let \(\ugamma \in Z\), \(k \in \N\), \(\varepsilon>0\) and \(\mu \in \MCM(\Lambda,T)\) be such that 
\[\lambda_{\mu}<\infty, \qquad \sup_{1 \leq i \leq k} \left|\int \phi_i \diff \mu-\gamma_i \right|\leq \varepsilon.\]
Then there exists a measure \(\nu \in \MCM(\Lambda,T)\) such that 
\[\frac{h_{\nu}}{\lambda_{\nu}} \geq s_{\infty}-\varepsilon, \qquad \sup_{1 \leq i \leq k} \left|\int \phi_i \diff\nu-\gamma_i \right|\leq 3\varepsilon.\]
\end{lemma}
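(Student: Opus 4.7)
The plan is to interpolate between the given measure $\mu$ and a measure drawn from Lemma \ref{lem:approxsinfinity}. The case $s_\infty = 0$ is immediate, since $h_\nu \geq 0$ and $\lambda_\nu \geq \log \zeta > 0$ for every $\nu \in \MCM(\Lambda,T)$, so one can take $\nu = \mu$ and the ratio exceeds $-\varepsilon$ trivially. Assume therefore $s_\infty > 0$. By Lemma \ref{lem:approxsinfinity} fix a sequence $(\mu_n) \subset \MCM(\Lambda,T)$ with $\lambda_{\mu_n} \to \infty$ and $h_{\mu_n}/\lambda_{\mu_n} \to s_\infty$; in particular each $\lambda_{\mu_n}$ is finite, so $h_{\mu_n} < \infty$ by Remark \ref{remark:entropylessthanlypunov}. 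For $t \in [0,1]$ and $n \in \N$ set $\nu_{t,n} := (1-t)\mu + t \mu_n \in \MCM(\Lambda,T)$. Transferring to $\Sigma$ via $\Pi$ and using the standard affinity of the Kolmogorov--Sinai entropy for invariant measures of finite entropy,
\[ h_{\nu_{t,n}} = (1-t)h_\mu + t h_{\mu_n}, \qquad \lambda_{\nu_{t,n}} = (1-t)\lambda_\mu + t\lambda_{\mu_n}, \]
while $\int \phi_i \diff\nu_{t,n} = (1-t)\int \phi_i \diff\mu + t\int\phi_i \diff\mu_n$ for each $i$.

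The two requirements on $\nu_{t,n}$ pull in opposite directions: the integral condition forces $t$ to be small, whereas making the ratio close to $s_\infty$ needs $t \lambda_{\mu_n}$ to dominate $(1-t)\lambda_\mu$. These are decoupled by first choosing $t$ small (independently of $n$) and then pushing $\lambda_{\mu_n} \to \infty$. Setting $C := \max_{1 \leq i \leq k}(\|\phi_i\|_\infty + |\gamma_i|)$, boundedness of the $\phi_i$ yields
\[ \left| \int \phi_i \diff\nu_{t,n} - \gamma_i \right| \leq (1-t)\varepsilon + tC \qquad (1 \leq i \leq k), \]
so fixing $t := \min\{2\varepsilon/C,\, 1/2\}$ ensures the right-hand side is at most $3\varepsilon$ for every $n$. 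With this $t$ frozen, divide numerator and denominator by $\lambda_{\mu_n}$ to get
\[ \frac{h_{\nu_{t,n}}}{\lambda_{\nu_{t,n}}} \;=\; \frac{(1-t)\,h_\mu/\lambda_{\mu_n} \,+\, t\,(h_{\mu_n}/\lambda_{\mu_n})}{(1-t)\,\lambda_\mu/\lambda_{\mu_n} \,+\, t} \;\xrightarrow[n \to \infty]{}\; \frac{t \cdot s_\infty}{t} \;=\; s_\infty. \]
Hence for all $n$ sufficiently large the ratio exceeds $s_\infty - \varepsilon$, and setting $\nu := \nu_{t,n}$ for any such $n$ produces the desired measure.

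There is no serious obstacle here: the one place that could fail is affinity of entropy, which requires $h_\mu$ and $h_{\mu_n}$ to be finite, and this is supplied by $\lambda_\mu < \infty$ together with $h_\mu \leq \lambda_\mu$ (Remark \ref{remark:entropylessthanlypunov}) and the analogous bound for $\mu_n$. Once finiteness of entropy is in place, the argument is a standard one-parameter interpolation exploiting the divergence $\lambda_{\mu_n} \to \infty$ from Lemma \ref{lem:approxsinfinity}.
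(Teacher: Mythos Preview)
Your proof is correct and follows essentially the same approach as the paper: form a convex combination of $\mu$ with a measure $\mu_n$ from Lemma \ref{lem:approxsinfinity}, use affinity of entropy and Lyapunov exponent, and exploit $\lambda_{\mu_n}\to\infty$ to drive the ratio to $s_\infty$. The paper fixes the mixing weight as $\varepsilon/A$ with $A=\sup_{1\le i\le k}\|\phi_i\|_\infty$ rather than your $\min\{2\varepsilon/C,1/2\}$, and does not split off the case $s_\infty=0$, but these are cosmetic differences; your added remark on finiteness of entropy (needed for affinity) is a nice point the paper leaves implicit.
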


\begin{proof}
The proof follows as in Lemma 5.1 in \cite{FJLR15}. Let \(\varepsilon>0\) and let \(A=\sup_{1 \leq i \leq k} \sup_{x \in \Lambda} |\phi_i(x)|\). By Lemma \ref{lem:approxsinfinity} we can find a sequence of measures \(\mu_n \in \MCM(\Lambda,T)\) such that \(\lim_{n \rightarrow \infty} \lambda_{\mu_n}=\infty\) and \(\lim_{n \rightarrow \infty} \frac{h_{\mu_n}}{\lambda_{\mu_n}}=s_{\infty}\). Consider the measure 
\[\nu_n=\left(1-\frac{\varepsilon}{A} \right)\mu+\frac{\varepsilon}{A}\mu_n.\]
Then we have that for each \(1 \leq i \leq k\) 
\[\left| \int \phi_i \diff \nu_n-\gamma_i \right| \leq \left|\int \phi_i \diff \mu-\gamma_i \right|+\left| \int \phi_i \diff \mu-\int \phi_i \diff \nu_n \right| \leq 3 \varepsilon. \]
Furthermore 
\begin{align*}
    \lim_{n \rightarrow \infty} \frac{h_{\nu_n}}{\lambda_{\nu_n}} &= \lim_{n \rightarrow \infty} \frac{(1-\varepsilon/A)h_{\mu}+\varepsilon/A h_{\mu_n}}{(1-\varepsilon/A)\lambda_{\mu}+\varepsilon/A \lambda_{\mu_n}} \\
    &= \lim_{n \rightarrow \infty} \frac{h_{\mu_n}}{\lambda_{\mu_n}} \\
    &= s_{\infty}.
\end{align*}
\end{proof}

\section{Proof of Theorem \ref{theo:uiexactbounded}}
Throughout this section \((\phi_i)_{i \in \N}\) refers to a sequence of functions in \(C_0(\Lambda)\) and \(\log|T'|-L \in C_0(\Lambda)\) for some \(L \geq \log \zeta\) . Notice if \(x \in \Lambda_{\MCT}\) then \(\lim_{n \rightarrow \infty} A_n \phi(x)=0\) for all \(\phi \in C_0(\Lambda)\). Therefore for any \(\ugamma \not=\underline{0}\) we must have \(\Lambda_{\MCR}(\ugamma)  =\Lambda(\ugamma)\). Furthermore note that \(\Lambda(\underline{0}) =\Lambda_{\MCR}(\underline{0}) \cup \Lambda_{\MCT}\). Hence to prove Theorem \ref{theo:uiexactbounded} it suffices to prove the following lemma. 

\begin{lemma}\label{prop6}
Let \((\phi_i)_{i \in \N} \subset C_0(\Lambda)\) be a sequence of functions, then for \(\ugamma \in Z\)
\[\alpha_1(\ugamma)=\alpha_4(\ugamma).\]
Moreover,
\[Z=\left\{ \underline{\gamma} \in \R^{\N}: \exists \mu \in \MCM_{\leq 1}(\Lambda,T), \forall i \in \N, \int \phi_i \diff \mu=\gamma_i \right\}.\]
\end{lemma}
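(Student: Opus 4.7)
I exploit the compactness of \(\MCM_{\leq 1}(\Sigma,\sigma)\) in the topology of convergence on cylinders and the fact (Lemma \ref{lem:convergenceoncylinderstestfunctions}) that \(C_0(\Sigma)\) is a set of test functions for this topology. Together with Theorem \ref{theo:uppersemicontiuitymike} and the identity \(\delta_\infty = h_\infty\), this lets us move between probability and sub-probability measures while controlling the integrals of \(\phi_i\), the Lyapunov exponent, and the entropy. Note that under the assumption \(\log|T'|-L \in C_0(\Lambda)\) the function \(\log|T'|\) is bounded, so \((\Sigma,\sigma)\) has finite topological entropy and the machinery of Section \ref{section:countablemarkovshifts} applies.

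\textbf{Characterisation of \(Z\).} Call the right-hand set \(Z'\). Pick any sequence \(\rho_n \in \MCM(\Lambda,T)\) of probability measures converging on cylinders to the zero measure (e.g.\ measures on periodic orbits through symbols \(\geq n\), which exist by topological mixing). For \(Z' \subseteq Z\): given \(\mu \in \MCM_{\leq 1}(\Lambda,T)\) realising \(\ugamma\), set \(\mu_n := \mu + (1-|\mu|)\rho_n \in \MCM(\Lambda,T)\); since \(\phi_i \in C_0(\Lambda)\), Lemma \ref{lem:convergenceoncylinderstestfunctions} gives \(\int \phi_i \diff\rho_n \to 0\), hence \(\int \phi_i \diff\mu_n \to \gamma_i\) for each \(i\). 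Conversely, given \(\ugamma \in Z\), choose probability measures \(\mu_n\) with \(|\int \phi_i \diff\mu_n - \gamma_i| < 1/n\) for \(i \leq n\); compactness furnishes a subsequence converging on cylinders to some \(\mu \in \MCM_{\leq 1}(\Lambda,T)\), and Lemma \ref{lem:convergenceoncylinderstestfunctions} then gives \(\int \phi_i \diff\mu = \gamma_i\).

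\textbf{Lower bound \(\alpha_1 \geq \alpha_4\).} Fix \(\mu \in \MCM_{\leq 1}(\Lambda,T)\) realising \(\ugamma\). By the variational principle \(\delta_\infty = h_\infty\), choose \(\rho_n \in \MCM(\Lambda,T)\) with \(\rho_n \to 0\) on cylinders and \(h_{\rho_n} \to \delta_\infty\). Set \(\mu_n := |\mu|\,(\mu/|\mu|) + (1-|\mu|)\rho_n \in \MCM(\Lambda,T)\), with the convention \(\mu_n := \rho_n\) when \(\mu = 0\). Affinity of entropy yields \(h_{\mu_n} = |\mu|\,h_{\mu/|\mu|} + (1-|\mu|)\,h_{\rho_n}\), and writing \(\log|T'| = L + f\) with \(f \in C_0(\Lambda)\) gives \(\lambda_{\rho_n} = L + \int f \diff\rho_n \to L\), so
\[
\frac{h_{\mu_n}}{\lambda_{\mu_n}} \xrightarrow[n \rightarrow \infty]{} \frac{|\mu|\,h_{\mu/|\mu|} + (1-|\mu|)\,\delta_\infty}{|\mu|\,\lambda_{\mu/|\mu|} + (1-|\mu|)\,L},
\]
while \(\int \phi_i \diff\mu_n \to \gamma_i\) for every \(i\). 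Thus for any \(\varepsilon > 0\) and \(k \in \N\), all sufficiently large \(\mu_n\) lie in the constraint set defining \(\alpha_1\), so \(\alpha_1(\ugamma)\) dominates the displayed limit; taking sup over \(\mu\) yields \(\alpha_1(\ugamma) \geq \alpha_4(\ugamma)\).

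\textbf{Upper bound \(\alpha_1 \leq \alpha_4\).} Since \(\alpha_1(\ugamma)\) is a decreasing double limit, for each \(n\) the inner supremum at parameters \(\varepsilon = 1/n\), \(k = n\) is at least \(\alpha_1(\ugamma)\); pick \(\mu_n \in \MCM(\Lambda,T)\) with \(\lambda_{\mu_n} < \infty\), \(|\int\phi_i\diff\mu_n - \gamma_i| < 1/n\) for \(i \leq n\), and \(h_{\mu_n}/\lambda_{\mu_n} \geq \alpha_1(\ugamma) - 1/n\). Passing to a subsequence using compactness of \(\MCM_{\leq 1}(\Sigma,\sigma)\), \(\mu_n\) converges on cylinders to some \(\mu \in \MCM_{\leq 1}(\Lambda,T)\). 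Lemma \ref{lem:convergenceoncylinderstestfunctions} applied to \(\phi_i\) gives \(\int \phi_i \diff\mu = \gamma_i\) for all \(i\), and applied to \(\log|T'|-L\) yields \(\lambda_{\mu_n} \to |\mu|\,\lambda_{\mu/|\mu|} + (1-|\mu|)\,L\), which is \(\geq L > 0\). Theorem \ref{theo:uppersemicontiuitymike} gives \(\limsup_n h_{\mu_n} \leq |\mu|\,h_{\mu/|\mu|} + (1-|\mu|)\,\delta_\infty\), so dividing by the positive limit of \(\lambda_{\mu_n}\),
\[
\alpha_1(\ugamma) \leq \limsup_{n \to \infty} \frac{h_{\mu_n}}{\lambda_{\mu_n}} \leq \frac{|\mu|\,h_{\mu/|\mu|} + (1-|\mu|)\,\delta_\infty}{|\mu|\,\lambda_{\mu/|\mu|} + (1-|\mu|)\,L} \leq \alpha_4(\ugamma).
\]

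\textbf{Main obstacle.} The substantive point is the coherent bookkeeping of lost mass: one must verify that \emph{every} relevant functional either is continuous in the cylinder topology (integrals against functions in \(C_0\), by Lemma \ref{lem:convergenceoncylinderstestfunctions}) or is upper-semicontinuous with the correct \(\delta_\infty\)-correction for entropy and \(L\)-correction for Lyapunov exponent. The role of the hypothesis \(L \geq \log\zeta > 0\) is to guarantee strict positivity of the limiting denominator so the quotient limit can be taken; the hypotheses \(\phi_i \in C_0(\Lambda)\) and \(\log|T'|-L \in C_0(\Lambda)\) are precisely what is needed to push each ingredient through the cylinder topology.
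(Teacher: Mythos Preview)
Your proof is correct and follows essentially the same approach as the paper's: both arguments use compactness of \(\MCM_{\leq 1}(\Sigma,\sigma)\) in the cylinder topology together with Lemma \ref{lem:convergenceoncylinderstestfunctions} and Theorem \ref{theo:uppersemicontiuitymike} for the upper bound, and the identity \(\delta_\infty = h_\infty\) plus affinity of entropy with the measures \(\mu + (1-|\mu|)\rho_n\) for the lower bound. The only differences are organisational---you separate out the characterisation of \(Z\) whereas the paper proves each inclusion in tandem with the corresponding inequality, and you reverse the order of the two inequalities---but the substance is identical.
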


\begin{proof}
Let \(\ugamma \in Z\). We first prove the inequality \(\alpha_1(\ugamma) \leq \alpha_4(\ugamma)\). Let \(\varepsilon_n\) be a positive sequence converging to zero. We may find a sequence of measures \(\mu_n \in \MCM(\Lambda,T)\) such that 
\begin{equation}\label{eqn13}
    \left|\int \phi_i \diff \mu_n-\gamma_i \right|<\varepsilon_n, \forall i \leq n
\end{equation}
and
\[\lim_{n \rightarrow \infty} \frac{h_{\mu_n}}{\lambda_{\mu_n}} = \alpha_1(\ugamma).\]
Applying Theorem 1.2 in \cite{IV19} to the measures \(\nu_n:=\mu_n \circ \Pi\), there exists a measure \(\nu \in \MCM_{\leq 1}(\Sigma,\sigma)\) and a subsequence \((\nu_{n_j})_{j \in \N}\) such that \(\nu_{n_j}\) converges to \(\nu\) in the topology of convergence of cylinders. Without loss of generality we may assume that \(\nu_{n}\) converges to this measure. Let \(\mu:=\Pi^* \nu\). By Lemma \ref{lem:convergenceoncylinderstestfunctions} and (\ref{eqn13}) we have
\[\int \phi_i \diff \mu=\int \phi_i \circ \Pi \: \diff \nu=\lim_{n \rightarrow \infty} \int \phi_i \circ \Pi \: \diff \nu_n =\lim_{n \rightarrow \infty} \int \phi_i \diff \mu_n= \gamma_i , \forall i \in \N.\]
This shows that 
\[Z \subseteq \left\{ \underline{\gamma} \in \R^{\N}: \exists \mu \in \MCM_{\leq 1}(\Lambda,T), \forall i \in \N, \int \phi_i \diff \mu=\gamma_i \right\}.\]
Furthermore, since \(\log|T' \circ \Pi|-L \in C_0(\Lambda) \) and by Theorem \ref{theo:uppersemicontiuitymike}, 
\begin{align*}
    \alpha_1(\ugamma) &=\lim_{n \rightarrow \infty} \frac{h_{\mu_n}}{\lambda_{\mu_n}} \\
    &= \lim_{n \rightarrow \infty} \frac{h_{\nu_n}}{\int \log|T' \circ \Pi|-L \diff \nu_n+L} \\
    &\leq \frac{ |\nu|h_{\frac{\nu}{|\nu|}}+(1-|\nu|) \delta_{\infty} }{\int \log|T' \circ \Pi|-L \diff \nu+L} \\
    &= \frac{ |\mu|h_{\frac{\mu}{|\mu|}}+(1-|\mu|) \delta_{\infty} }{|\mu|\lambda_{\frac{\mu}{|\mu|}}+(1-|\mu|)L} \\
    & \leq \sup_{\mu \in \MCM_{\leq 1}(\Lambda,T)} \left\{\frac{ |\mu|h_{\frac{\mu}{|\mu|}}+(1-|\mu|) \delta_{\infty} }{|\mu|\lambda_{\frac{\mu}{|\mu|}}+(1-|\mu|)L}: \int \phi_i \diff \mu =\gamma_i, \forall i \in \N \right\} \\ 
    &= \alpha_4(\ugamma).
\end{align*} 

We now prove the inequality \(\alpha_4(\ugamma) \leq \alpha_1(\ugamma)\). Let \(\eta>0\) and choose \(\mu \in \MCM_{\leq 1}(\Lambda,T)\) such that
\[ \frac{ |\mu|h_{\mu / |\mu|}+(1-|\mu|) \delta_{\infty} }{|\mu|\lambda_{\frac{\mu}{|\mu|}}+(1-|\mu|)L} \geq \alpha_4(\ugamma)  - \eta \]
and 
\[\int \phi_i \diff \mu =\gamma_i, \forall i \in \N. \]
We can find a sequence of measures \(\nu_n \in \MCM(\Sigma,\sigma)\) converging to the zero measure on cylinders such that \(h_{\nu_n} \rightarrow \delta_{\infty}\). Consider the measures \(\mu_n=\mu+ (1-|\mu|)\Pi^*\nu_n\). Notice that \( \mu_n \circ \Pi\) converges to \(\mu \circ \Pi\) on cylinders, therefore
\begin{equation}\label{eqn12}
    \int \phi_i \diff \mu_n = \int \phi_i \circ \Pi \: d(\mu_n \circ \Pi) \xrightarrow[n \to \infty]{} \int \phi_i \circ \Pi \: d(\mu \circ \Pi) = \int \phi_i \diff \mu= \gamma_i, \forall i \in \N.
\end{equation}
This shows that
\[Z \supseteq \left\{ \underline{\gamma} \in \R^{\N}: \exists \mu \in \MCM_{\leq 1}(\Lambda,T), \forall i \in \N, \int \phi_i \diff \mu=\gamma_i \right\}.\]
By the affinity of the entropy map (see \cite[Theorem 8.1]{Wal81}, noting it also holds in this non-compact setting), we have
\[\frac{h_{\mu_n}}{\lambda_{\mu_n}} = \frac{|\mu|h_{\frac{\mu}{ |\mu|}}+(1-|\mu|) h_{\nu_n}}{|\mu| \lambda_{\frac{\mu}{|\mu|}}+(1-|\mu|)\int \log |T' \circ \Pi|-L \diff \nu_n+(1-|\mu|)L}, \forall n \in \N. \]
Hence as \(h_{\nu_n} \rightarrow \delta_{\infty}\) and \(\int \log|T' \circ \Pi|-L \diff \nu_n \rightarrow 0\), for all \(n\) sufficiently large we have 
\[ \frac{h_{\mu_n}}{\lambda_{\mu_n}} \geq  \frac{|\mu|h_{\frac{\mu}{|\mu|}}+(1-|\mu|) \delta_{\infty}}{|\mu| \lambda_{\frac{\mu}{|\mu|}}+(1-|\mu|)L}-\eta.\]
Moreover, by (\ref{eqn12}), for any \(\varepsilon>0\) and \(k \in \N\) and all \(n\) sufficiently large we have
\[\left| \int \phi_i \diff \mu_n - \gamma_i \right|<\varepsilon , \forall i \leq k.\]
We may choose \(\varepsilon>0\) small enough and \(k \in \N\) large enough so that 
\[\sup_{\mu \in \MCM(\Lambda,T)} \left\{\frac{h_{\mu}}{\lambda_{\mu}}:\left|\int \phi_i \diff \mu-\gamma_i \right|<\varepsilon, \forall i \leq k, \: \lambda_{\mu}<\infty \right\} \leq \alpha_1(\ugamma)+\eta.\]
Putting this together, with \(n\) large enough we have
\begin{align*}
    \alpha_4(\ugamma) &\leq  \frac{ |\mu|h_{\frac{\mu}{ |\mu|}
    }+(1-|\mu|) \delta_{\infty} }{|\mu|\lambda_{\frac{\mu}{|\mu|}}+(1-|\mu|)L}  +\eta\\
    & \leq \frac{h_{\mu_n}}{\lambda_{\mu_n}} +2\eta\\
    & \leq \sup_{\mu \in \MCM(\Lambda,T)} \left\{\frac{h_{\mu}}{\lambda_{\mu}}:\left|\int \phi_i \diff \mu-\gamma_i \right|<\varepsilon, \forall i \leq k, \: \lambda_{\mu}<\infty \right\} +2\eta\\
    & \leq \alpha_1(\ugamma) + 3\eta.
\end{align*}
This concludes the proof of Lemma \ref{prop6}.
\end{proof}

\section{Applications}\label{sec:applications}
\subsection{Frequency of digits}
Our first application of our theorems is to the frequency of digit case, when \(\phi_i=1_{I_i}\). In this case 
\[\Lambda(\underline{\gamma}):=\left\{ x \in \Lambda \setminus \cup_{k=0}^{\infty} T^{-k}E:\lim_{n \rightarrow \infty} \frac{1}{n} \# \{j \leq n: T^j(x) \in I_i \} =\gamma_i \text{ for all } i \in \N \right\}.\]
Here for all \(\ugamma \in Z \setminus \{\underline{0}\}\) we have \(\Lambda(\underline{\gamma})=\Lambda_{\MCR}(\underline{\gamma})\), and \(\dim \Lambda(\underline{0})=\max \{\dim \Lambda_{\MCR}(\underline{0}),\dim \Lambda_{\MCT} \}\). Note that it is possible that \(\Lambda(\underline{0})= \emptyset\), for example if \(\Sigma\) is as in Example 4.17 in \cite{IV19}. 
The maps \(T_i:I_i \rightarrow [0,1]\), where  \(I_i=(1/2^{i},1/2^{i-1}]\), given by
\[T_1(x)=2x-1\]
\[T_i(x)=2^{i-1}x, \: \forall i \geq 2\]
give such a \(\Sigma\).

By Theorem \ref{theo:uiexact}, if 
\begin{equation*}\label{eqn:logTlimitapplication}
    \inf\{|T'(x)|:x \in \overline{I_i} \setminus E\} \rightarrow \infty
\end{equation*}
then for \(\ugamma \in Z_0\) 
\[\dim \Lambda(\ugamma)= \max \left\{s_{\infty},\sup_{\mu \in \MCM(\Lambda,T)} \left\{\frac{h_{\mu}}{\lambda_{\mu}}:\mu(I_i) =\gamma_i, \forall i \in \N, \lambda_{\mu}<\infty \right\} \right\},  \] 
and for \(\ugamma \in Z \setminus (Z_0 \cup \{0\})\), \(\dim \Lambda(\ugamma)=s_{\infty}\). Moreover, if \(\underline{0} \in Z\), then it satisfies
\[\dim \Lambda(\underline{0})=\max\{s_{\infty},\dim \Lambda_{\MCT}\}.\]

In the case where \(\log|T'|\) is as in Theorem \ref{theo:uiexactbounded}, for all 
\[\ugamma \in \left\{ \underline{\gamma}' \in \R^{\N}: \exists \mu \in \MCM_{\leq 1}(\Lambda,T), \forall i \in \N, \mu(I_i)=\gamma'_i \right\}\setminus \{\underline{0}\} \]
we have 
\[\dim \Lambda(\ugamma)= \sup_{\mu \in \MCM_{\leq 1}(\Lambda,T)} \left\{\frac{|\mu|h_{\frac{\mu}{|\mu|}}+(1-|\mu|)\delta_{\infty}}{|\mu|\lambda_{\frac{\mu}{|\mu|}}+(1-|\mu|)L}:\mu(I_i)=\gamma_i, \forall i \in \N  \right\}.\]
Furthermore,
\[\dim \Lambda(\underline{0})=\max \left\{ \frac{\delta_{\infty}}{L},\dim \Lambda_{\MCT} \right\}.\] 

A major difference between the two cases is that if \(\log|T'|\) is as in Theorem \ref{theo:uiexact}, then when \(\ugamma \in Z\) and \(\sum_{i=1}^{\infty} \gamma_i <1\), \(\dim \Lambda(\ugamma)\) can only take one value, whereas if \(\log|T'|\) is as in Theorem \ref{theo:uiexactbounded} then it can take a range of values. 

\subsection{A countably piecewise linear, uniformly expanding map}
In \cite{SV97}, \cite{BT12}, \cite{BT15} and \cite{IJT17} they consider the map \(F_{\lambda}:(0,1] \rightarrow (0,1]\), where \(\lambda \in (0,1)\), defined by
\begin{figure}[h!]
\begin{minipage}[c]{\textwidth-7cm}
      \[F_{\lambda}(x):=
  \begin{cases}
    \frac{x-\lambda}{1-\lambda}, & \text{for } x \in I_1 \\
    \frac{x-\lambda^n}{\lambda(1-\lambda)}, & \text{for } x \in I_n, n \geq 2,
  \end{cases}
\]
for intervals \(I_n:=(\lambda^n,\lambda^{n-1}]\). 
   \end{minipage}
   \begin{minipage}[c]{3cm}
    \begin{tikzpicture}[scale=1]
        \begin{axis}[ymin=0,ymax=1,xmax=1,xmin=0,xtick=\empty,xticklabels=\empty, ,ytick=\empty, xtick style={draw=none}, extra x ticks={(1-0.75)/2+0.75,(0.75-0.75^2)/2+0.75^2,(0.75^2-0.75^3)/2+0.75^3,(0.75^3-0.75^4)/2+0.75^4,(0.75^4-0.75^5)/2+0.75^5,(0.75^5-0.75^6)/2+0.75^6},
    extra x tick labels={$I_1$,$I_2$,$I_3$,$I_4$, $I_5$,$\ldots\vphantom{I_6} $}]
    \addplot[smooth,samples=200,domain=0.75:1]{(x-0.75)/(0.25)};
    \addplot[smooth,samples=200,domain=0:1]{x};
    \foreach \n in {2, ...,30}
    {\addplot[smooth,samples=200,domain=0.75^\n:0.75^(\n-1)]{(x-0.75^\n)/(0.75*0.25)};}
    \end{axis}
    \end{tikzpicture}
   \end{minipage}
\end{figure}

This map was first proposed by van Strien to Stratmann as a model for an induced map of a Fibonacci unimodal map. It is easy to see that it can be coded by a topologically mixing Markov shift and that \(\log|F_{\lambda}'|+\log(\lambda(1-\lambda))\in C_0(\Lambda)\). Thus we can apply Theorem \ref{theo:uiexactbounded} directly. Given a sequence of functions \(\phi_i \in C_0(\Lambda)\) we have for \(\ugamma \in Z \setminus \{\underline{0}\}\)
\[\dim \Lambda (\ugamma)= \alpha_4(\ugamma) \]
and 
\[\dim \Lambda (\underline{0})=\max\{\alpha_4(\underline{0}),\dim \Lambda_{\MCT} \}.\]
In fact, we can find more explicit expressions for the dimensions. Theorem 1 in \cite{SV97} says that
\[\dim \Lambda_{\MCT}=\begin{cases}
  -\frac{\log 4}{\log(\lambda(1-\lambda))} &, \text{for } \lambda \leq \frac{1}{2}  \\
  1 &, \text{for } \lambda \geq \frac{1}{2}.
\end{cases} \]
Moreover, they show in \cite{BT12} (Theorem B) that \(h_{\text{top}}(\Sigma,\sigma)=\log 4 \). We have the following lemma.
\begin{lemma}
\(\delta_{\infty}(\Sigma,\sigma)=h_{\text{top}}(\Sigma,\sigma)=\log 4\).
\end{lemma}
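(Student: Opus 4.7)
The plan is to establish $\delta_\infty = \log 4$ through the variational characterization $\delta_\infty = h_\infty$ from \cite[Theorem 1.4]{ITV19}. The upper bound $\delta_\infty \leq h_{\text{top}}(\Sigma,\sigma) = \log 4$ is immediate from the definition of $h_\infty$ together with (\ref{eqn:hvariationalprinciple}), so the real task is to exhibit a sequence of $\sigma$-invariant probability measures on $\Sigma$ that converges to the zero measure on cylinders while carrying entropy arbitrarily close to $\log 4$.

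The structural input is a self-similarity in the coding of $F_\lambda$. Since $T_1(I_1) = (0,1]$, $T_2(I_2) = (0,1]$, and $T_k(I_k) = (0,\lambda^{k-2}]$ for $k \geq 3$, the admissible transitions of $(\Sigma,\sigma)$ are: from states $1$ or $2$, to any state in $\N$; from state $k \geq 3$, to any state $\geq k-1$. For each $N \geq 3$ I consider the tail subshift $\hat{\Sigma}_N := \Sigma \cap \{N, N+1, N+2, \ldots\}^\N$ and the relabeling $k \mapsto k - N + 1$, and check that $\hat{\Sigma}_N$ is shift conjugate to $\Sigma$: from state $N$ the transitions inherited from $\Sigma$ reach $\{N-1, N, N+1, \ldots\}$, which after restriction becomes $\{N, N+1, \ldots\}$ (relabeled $\{1, 2, \ldots\}$); from state $N+1$ they already land in $\{N, N+1, \ldots\}$ (relabeled $\{1, 2, \ldots\}$); and from $N + j$ with $j \geq 2$ they reach $\{N+j-1, N+j, \ldots\}$ (relabeled $\{j, j+1, \ldots\}$). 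This exactly matches the transition rule of $\Sigma$, giving $\hat{\Sigma}_N \cong \Sigma$ as countable Markov shifts.

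Consequently $\hat{\Sigma}_N$ is topologically mixing with $h_{\text{top}}(\hat{\Sigma}_N, \sigma) = \log 4$, so the variational principle (\ref{eqn:hvariationalprinciple}) supplies an ergodic measure $\mu_N \in \MCM_{\MCE}(\hat{\Sigma}_N, \sigma)$ with $h_{\mu_N} \geq \log 4 - 1/N$. Viewing $\mu_N$ as an element of $\MCM(\Sigma, \sigma)$ by zero extension, it is supported on $\{N, N+1, \ldots\}^\N$, so $\mu_N(C) = 0$ for every fixed cylinder $C \subset \Sigma$ as soon as $N$ exceeds the symbols appearing in $C$; in particular $\mu_N \to 0$ in the topology of convergence on cylinders. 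By the definition of $h_\infty$ this yields $h_\infty \geq \limsup_N h_{\mu_N} = \log 4$, and \cite[Theorem 1.4]{ITV19} then gives $\delta_\infty = h_\infty \geq \log 4$.

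The only delicate step is the conjugacy $\hat{\Sigma}_N \cong \Sigma$ for $N \geq 3$: the restriction to the tail simultaneously removes the exit transition $N \to N-1$ while preserving the full outgoing alphabets of both $N$ and $N+1$, and this is precisely what reproduces the two fully-branching states $1$ and $2$ of $\Sigma$. The cutoff $N \geq 3$ is essential here (for $N = 2$ only state $2$ itself remains fully branching within the tail), but this is harmless since we only need $N \to \infty$.
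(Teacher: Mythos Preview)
Your proof is correct and takes essentially the same approach as the paper: both exploit the self-similarity of the transition graph to push a high-entropy invariant measure onto the tail alphabet $\{N,N+1,\ldots\}$, producing a sequence converging to the zero measure on cylinders with entropy tending to $\log 4$. The paper phrases this as the single-line shift $\nu_n([\omega_1,\ldots,\omega_k]):=\nu([\omega_1-n,\ldots,\omega_k-n])$, which is precisely the pushforward of $\nu$ under the conjugacy $\Sigma\cong\hat\Sigma_{n+1}$ that you spell out explicitly; your version is more detailed about why this map is a conjugacy and about the upper bound $\delta_\infty\le h_{\text{top}}$, which the paper leaves implicit.

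One inessential slip: your parenthetical claim that the conjugacy fails at $N=2$ is not right. From state $3$ in $\hat\Sigma_2$ the allowed successors are $\{2,3,\ldots\}$, the whole tail alphabet, so after relabeling both states $1$ and $2$ are fully branching and $\hat\Sigma_2\cong\Sigma$ as well. As you yourself note, this has no bearing on the argument since only $N\to\infty$ is used.
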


\begin{proof}
Let \(\varepsilon>0\) and let \(\nu \in \MCM(\Sigma,\sigma)\) be such that \(h_{\nu} \geq h_{\text{top}}(\Sigma,\sigma)-\varepsilon\). We can define measures \(\nu_n \in \MCM(\Sigma,\sigma)\) by letting 
\(\nu_n([\omega_1,\ldots,\omega_k]):=\nu([\omega_1-n,\ldots, \omega_k-n])\)
for all \(k \in \N\) and all cylinders \([\omega_1,\ldots,\omega_k] \in \MCC_k(\Sigma)\) such that \(\omega_j \geq n+1\) for all \(1 \leq j \leq k\). For all other cylinders we set \(\nu_n(C)=0\). Then \(h_{\nu_n}=h_{\nu}>h_{\text{top}}(\Sigma,\sigma)-\varepsilon\) and \(\nu_n \rightarrow 0\) on cylinders.
\end{proof}

Thus, for \(\ugamma \in Z \setminus \{\underline{0}\}\)
\[\dim \Lambda (\ugamma)=  \sup_{\mu \in \MCM_{\leq 1}(\Lambda,T)} \left\{\frac{|\mu|h_{\frac{\mu}{|\mu|}}+(1-|\mu|)\log 4}{|\mu|\lambda_{\frac{\mu}{|\mu|}}-(1-|\mu|)\log(\lambda(1-\lambda))}:\int \phi_i \diff \mu=\gamma_i, \forall i \in \N  \right\}.\]
Moreover, if \(\lambda \geq \frac{1}{2}\) we have
\[\dim \Lambda (\underline{0})=1 \]
and if \(\lambda < \frac{1}{2}\),
\[\dim \Lambda (\underline{0})= \sup_{\mu \in \MCM_{\leq 1}(\Lambda,T)} \left\{\frac{|\mu|h_{\frac{\mu}{|\mu|}}+(1-|\mu|)\log 4}{|\mu|\lambda_{\frac{\mu}{|\mu|}}-(1-|\mu|)\log(\lambda(1-\lambda))}:\int \phi_i \diff \mu=0, \forall i \in \N  \right\}.\]

\subsection{Cases when \(\alpha_4(\ugamma)=\alpha_3(\ugamma)\)}
It is natural to ask whether the supremum \(\alpha_4(\ugamma)\) in Theorem \ref{theo:uiexactbounded} can instead be written as a supremum over probability measures; that is, when does \(\alpha_4(\ugamma)=\alpha_3(\ugamma)\)? Notice that this will hold in the frequency of digits case whenever the sum of the frequencies is equal to one. Corollary 2.4 in \cite{IJT17} gives other circumstances where this is the case. Note that when we have only one function \(\phi:\Lambda \rightarrow \R\), the set 
\[\left\{ \gamma \in \R^{\N}: \exists \mu \in \MCM(\Lambda,T), \int \phi_i \diff \mu=\gamma \right\}\]
is an interval, with end points \(\gamma_m, \gamma_M\) say. An application of their results, which they prove much more generally, says that when we have only one function \(\phi \in C_0(\Lambda)\) and both \(f_\phi\), \(f_{\log|T'|}\) are moreover locally H\"{o}lder continuous, then for \(\gamma \in (\gamma_m,\gamma_M)\)
\[\dim \Lambda(\gamma)=\sup_{\mu \in \MCM(\Lambda,T)} \left\{\frac{h_{\mu}}{\lambda_{\mu}}:\int \phi \diff \mu=\gamma \right\}. \]
We prove the following theorem which says that this holds whenever we have finitely many functions, and that it is not necessary to also assume that \(f_{\phi_i}\) and \(f_{\log|T'|}\) are locally H\"{o}lder continuous. 

\begin{theo}
In the setting of Theorem \ref{theo:uiexactbounded}, let \(\phi_1,\ldots,\phi_k \in C_0(\Lambda)\) and suppose that \(\{\int \phi_i \diff \mu:\mu \in \MCM(\Lambda,T)\} \) is not a singleton for all \(i \in \N\). Then for \(\gamma \in \interior Z_0^{(k)}\) 
\begin{equation*}\label{eqn:alpha3equalalpha4}
    \sup_{\mu \in \MCM_{\leq 1}(\Lambda,T)} \left\{\frac{|\mu|h_{\frac{\mu}{|\mu|}}+(1-|\mu|)\delta_{\infty}}{|\mu|\lambda_{\frac{\mu}{|\mu|}}+(1-|\mu|)L}:\int \phi_i \diff \mu=\gamma_i, 1 \leq i \leq  k  \right\}=\sup_{\mu \in \MCM(\Lambda,T)} \left\{\frac{h_{\mu}}{\lambda_{\mu}}:\int \phi_i \diff \mu =\gamma_i, 1 \leq i \leq k \right\}, 
\end{equation*}
where
\[Z_0^{(k)}=\left\{ \underline{\gamma} \in \R^{k}: \exists \mu \in \MCM(\Lambda,T), \int \phi_i \diff \mu=\gamma_i, \forall 1 \leq i \leq k \right\}. \]
\end{theo}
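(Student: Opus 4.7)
The inequality $\alpha_4(\gamma)\geq \alpha_3(\gamma)$ is immediate: every probability measure is sub-probability, and at $|\mu|=1$ the affine expression defining $\alpha_4$ reduces to $h_\mu/\lambda_\mu$. I focus on the reverse inequality. Throughout, note that $\log|T'|-L\in C_0(\Lambda)$ forces $\log|T'|$ to be bounded, so every Lyapunov exponent is finite, and consequently (Lemma \ref{lem:entropylessthanlyapunov} plus Remark \ref{remark:entropylessthanlypunov}) every entropy is bounded as well.

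Fix $\eta>0$ and pick $\mu\in \MCM_{\leq 1}(\Lambda,T)$ with $\int\phi_i \diff \mu=\gamma_i$ for $i\leq k$ achieving $\alpha_4(\gamma)$ to within $\eta$; set $t=|\mu|$ and $\tilde\mu=\mu/t$. The first step mimics the lower bound in the proof of Lemma \ref{prop6}: by the identity $\delta_\infty=h_\infty$ from \cite{ITV19}, choose $\nu_n\in \MCM(\Sigma,\sigma)$ with $\nu_n\to 0$ on cylinders and $h_{\nu_n}\to\delta_\infty$. Because $\phi_i,\log|T'|-L\in C_0(\Lambda)$, Lemma \ref{lem:convergenceoncylinderstestfunctions} yields $\int\phi_i \diff\Pi^*\nu_n\to 0$ and $\lambda_{\Pi^*\nu_n}\to L$. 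The probability measure $\rho_n:=t\tilde\mu+(1-t)\Pi^*\nu_n\in \MCM(\Lambda,T)$ then satisfies, by affinity of entropy and linearity of the Lyapunov integral,
\[
\frac{h_{\rho_n}}{\lambda_{\rho_n}}\;\longrightarrow\;\frac{th_{\tilde\mu}+(1-t)\delta_\infty}{t\lambda_{\tilde\mu}+(1-t)L}\;\geq\;\alpha_4(\gamma)-\eta, \qquad \int\phi_i \diff\rho_n\longrightarrow\gamma_i \;\;(1\leq i\leq k).
\]
The catch is that the integrals only converge to, rather than equal, $\gamma_i$, so $\rho_n$ is not yet admissible in the definition of $\alpha_3(\gamma)$.

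The corrective step uses the interior hypothesis. Choose $\gamma^{(1)},\ldots,\gamma^{(k+1)}\in Z_0^{(k)}$ placing $\gamma$ in the interior of $\mathrm{conv}\{\gamma^{(1)},\ldots,\gamma^{(k+1)}\}$, and realise each $\gamma^{(j)}$ by a probability measure $\eta^{(j)}\in \MCM(\Lambda,T)$; by the uniform bounds above, $h_{\eta^{(j)}}$ and $\lambda_{\eta^{(j)}}$ are finite and controlled independently of $j$. For $n$ large enough that the vector $(\int\phi_i \diff\rho_n)_{i\leq k}$ lies in that interior, a standard affine solve produces non-negative $c_n^{(0)},\ldots,c_n^{(k+1)}$ with $\sum_j c_n^{(j)}=1$, $c_n^{(0)}\to 1$, $c_n^{(j)}\to 0$ for $j\geq 1$, and
\[
c_n^{(0)}\int\phi_i \diff\rho_n+\sum_{j=1}^{k+1} c_n^{(j)}\gamma_i^{(j)}=\gamma_i \qquad (1\leq i\leq k).
\]
The concrete choice $1-c_n^{(0)}=\sqrt{\,\|\gamma-(\int\phi_i\diff\rho_n)_i\|\,}$ makes this feasible for large $n$. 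The probability measure $\rho_n':=c_n^{(0)}\rho_n+\sum_{j\geq 1} c_n^{(j)}\eta^{(j)}$ then satisfies $\int\phi_i \diff\rho_n'=\gamma_i$ exactly, and affinity together with the boundedness of $h_{\eta^{(j)}},\lambda_{\eta^{(j)}}$ gives
\[
\lim_{n\to\infty}\frac{h_{\rho_n'}}{\lambda_{\rho_n'}}=\lim_{n\to\infty}\frac{h_{\rho_n}}{\lambda_{\rho_n}}\;\geq\;\alpha_4(\gamma)-\eta,
\]
whence $\alpha_3(\gamma)\geq \alpha_4(\gamma)-\eta$. Letting $\eta\to 0$ concludes the proof. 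The main obstacle is exactly this corrective step: one must absorb the vanishing error in the integrals using only a vanishing mass of auxiliary measures, while keeping the ratio of entropy to Lyapunov exponent undisturbed; the interior hypothesis, combined with the a priori boundedness of $h$ and $\lambda$, is precisely what makes this possible.
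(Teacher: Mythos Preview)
Your proof is correct and follows essentially the same strategy as the paper: start from a near-optimal sub-probability $\mu$, fill in the missing mass with a sequence of probability measures escaping to infinity (so that the filled-in part contributes entropy $\delta_\infty$ and Lyapunov exponent $L$), and then correct the resulting small error in the $\phi_i$-integrals using the interior hypothesis. The only difference is in the organisation of the corrective step: the paper absorbs the error with a \emph{single} auxiliary measure $\mu'$ chosen at a variable target point $\gamma_i-\tfrac{\varepsilon}{\varepsilon'}\int\phi_i\,\diff\nu$ and mixes it in with a vanishing coefficient $\alpha$, whereas you fix a simplex of $k+1$ auxiliary measures around $\gamma$ once and for all and then solve for the (vanishing) barycentric coefficients. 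Both packagings exploit the same convexity and the same a~priori boundedness of $h$ and $\lambda$; your version is slightly more systematic but neither approach is materially shorter. One cosmetic point: you write $\tilde\mu=\mu/t$ without separating out the case $t=0$ (i.e.\ $\gamma=\underline 0$); the paper notes this case explicitly, but your argument goes through unchanged by simply taking $\rho_n=\Pi^*\nu_n$ there.
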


\begin{proof}
The inequality
\[ \sup_{\mu \in \MCM_{\leq 1}(\Lambda,T)} \left\{\frac{|\mu|h_{\frac{\mu}{|\mu|}}+(1-|\mu|)\delta_{\infty}}{|\mu|\lambda_{\frac{\mu}{|\mu|}}+(1-|\mu|)L}:\int \phi_i \diff \mu=\gamma_i, 1 \leq i \leq  k  \right\} \geq \sup_{\mu \in \MCM(\Lambda,T)} \left\{\frac{h_{\mu}}{\lambda_{\mu}}:\int \phi_i \diff \mu =\gamma_i, 1 \leq i \leq k \right\}\]
is clear. We prove the other inequality for \(\ugamma \in \interior Z_0^{(k)} \setminus \{\underline{0}\}\). The case when \(\ugamma=\underline{0}\) can be proved similarly. Choose \(\ugamma \in \interior Z_0^{(k)} \setminus \{\underline{0}\}\) and let \(\varepsilon>0\) be such that the open ball \(B(\ugamma,\varepsilon)\) is a subset of the interior of \(Z_0^{(k)}\). We may find \(\mu \in \MCM_{\leq 1}(\Lambda,T)\) such that 
\[\frac{|\mu|h_{\frac{\mu}{|\mu|}}+(1-|\mu|)\delta_{\infty}}{|\mu|\lambda_{\frac{\mu}{|\mu|}}+(1-|\mu|)L} \geq \sup_{\mu \in \MCM_{\leq 1}(\Lambda,T)} \left\{\frac{|\mu|h_{\frac{\mu}{|\mu|}}+(1-|\mu|)\delta_{\infty}}{|\mu|\lambda_{\frac{\mu}{|\mu|}}+(1-|\mu|)L}:\int \phi_i \diff \mu =\gamma_i, 1 \leq i \leq k   \right\}-\varepsilon. \]
and \(\int \phi_i \diff \mu=\gamma_i\) for all \(i \leq k\). If \(\mu \in \MCM(\Lambda,T)\) then we are finished, and \(|\mu|=0\) would imply that \(\ugamma=\underline{0}\), so assume that \(0<|\mu| <1\). For \(0<\varepsilon'< \varepsilon\), we may find \(\nu \in \MCM(\Lambda,T)\) such that \(h_{\nu}>\delta_{\infty}-\varepsilon\), \(|\lambda_{\nu}-L|<\varepsilon\), and \(|\int \phi_i \diff \nu|<\varepsilon'\) for all \(i \leq  k\). By our choice of \(\varepsilon\), there exists \(\mu' \in \MCM(\Lambda,T)\) such that
\[\int \phi_i \diff \mu'=\gamma_i-\frac{\varepsilon}{\varepsilon'}\int \phi_i \diff \nu, \: \forall i \leq k. \]
Let
\[\alpha:=\frac{1-|\mu|}{1-|\mu|+\frac{\varepsilon}{\varepsilon'}}.\]
Then the measure
\[\eta:=(1-\alpha) \mu+\alpha \mu'+(1-\alpha)(1-|\mu|) \nu \in \MCM(\Lambda,T) \]
satisfies \(\int \phi_i \diff \eta=\gamma_i\) for all \(i \leq  k\). If we let \(M=\sup\{\log|T'|\}\), then by the affinity of the entropy map we have
\begin{align*}
    \frac{h_{\eta}}{\lambda_{\eta}}& \geq \frac{(1-\alpha)|\mu|h_{\frac{\mu}{|\mu|}}+(1-\alpha)(1-|\mu|)(\delta_{\infty}-\varepsilon)}
    {(1-\alpha)|\mu|\lambda_{\frac{\mu}{|\mu|}}+\alpha M+(1-\alpha)(1-|\mu|)(L+\varepsilon)}.
\end{align*}
Moreover, since \(0<\varepsilon'<\varepsilon\) were arbitrary, \(\alpha\) and \(\varepsilon\) can be made arbitrarily small. This finishes the proof. 
\end{proof} 

We finish by giving an example where it does not suffice to take the supremum over probability measures. Consider the map \(F_{\lambda}\) as in the previous section and functions \(\phi_i=1_{I_{i+1}}\) for \(i \in \N\). Then \(\underline{0} \in Z_0\) since \(\Pi^* \delta_{(1,1,1\ldots)} \in \MCM(\Lambda,F_{\lambda})\) but \(\alpha_4(\underline{0})=\log 4>0=\alpha_3(\underline{0}).\) For a non-zero example, take \(0<\varepsilon<1/2\) and note that \((\frac{\varepsilon}{2},\frac{\varepsilon}{4},\frac{\varepsilon}{8},\ldots) \in Z_0\) since \((1-\varepsilon)\Pi^* \delta_{(1,1,1\ldots)}+\varepsilon \sum_{i=1}^{\infty}\frac{\Pi^* \delta_{(i,i,i\ldots)} }{2^i} \in \MCM(\Lambda,T) \), but \(\alpha_3(\frac{\varepsilon}{2},\frac{\varepsilon}{4},\frac{\varepsilon}{8},\ldots) \leq \varepsilon \log 4<(1-\varepsilon) \log 4 \leq \alpha_4(\frac{\varepsilon}{2},\frac{\varepsilon}{4},\frac{\varepsilon}{8},\ldots) \).


\begin{thebibliography}{}
\bibliographystyle{alphanum} 

\bibitem[BI06]{BI06}
L. Barriera, G. Iommi \textit{Suspension Flows Over Countable Markov Shifts}
Journal of Statistical Physics, Vol. 124, No. 1, July 2006,
\url{https://doi.org/10.1007/s10955-006-9140-9}

\bibitem[Bes34]{Bes34}
A.S. Besicovitch, \textit{On the sum of digits of real numbers represented in the dyadic system}.
Math. Annalen, 110 (1934), 321-30

\bibitem[BSa01]{BSa01}
L. Barreira and B. Saussol, \textit{Variational principles and mixed multifractal spectra},
Trans. Amer. Math. Soc. 353 (2001), no. 10, 3919–3944 (electronic). MR 1837214
(2002d:37048)

\bibitem[BSc00]{BSc00}
L. Barreira and J. Schmeling, \textit{Sets of “non-typical” points have full topological entropy and full Hausdorff dimension}, Israel J. Math. 116 (2000), 29–70. MR 1759398
(2002d:37040)

\bibitem[BSS02a]{BSS02a}
L. Barreira, B. Saussol, and J. Schmeling, \textit{Distribution of frequencies of digits via
multifractal analysis}, J. Number Theory 97 (2002), no. 2, 410–438. MR 1942968
(2003m:11124)

\bibitem[BSS02b]{BSS02b}
L. Barreira, B. Saussol, and J. Schmeling, \textit{Higher-dimensional multifractal analysis}, J. Math. Pures Appl. (9) 81 (2002), no. 1, 67–91. MR 1994883 (2004g:37038)

\bibitem[BT12]{BT12}
H. Bruin and M. Todd, \textit{Transience and thermodynamic formalism for infinitely branched
interval maps}. J. London Math. Soc. 86 (2012), 171–194

\bibitem[BT15]{BT15}
H. Bruin and M. Todd. \textit{Wild attractors and thermodynamic formalism}. Monatshefte für Mathematik. 2015 Sep;178(1):39-83 . \url{https://doi.org/10.1007/s00605-015-0747-2}

\bibitem[Caj81]{Caj81}
H. Cajar, \textit{Billingsley dimension in probability spaces}, Lecture Notes in Mathematics,
vol. 892, Springer-Verlag, Berlin, 1981. MR 654147 (84a:10055)

\bibitem[Dur97]{Dur97}
A. Durner, \textit{Distribution measures and Hausdorff dimensions, Forum Math}. 9 (1997),
no. 6, 687–705. MR 1480551 (98i:11060)

\bibitem[Egg49]{Egg49}
H.G. Eggleston, \textit{The fractional dimension of a set defined by decimal properties}, The Quarterly Journal of Mathematics, Volume os-20, Issue 1, 1949, Pages 31–36, \url{https://doi.org/10.1093/qmath/os-20.1.31}

\bibitem[FF00]{FF00}
A.-H. Fan and D.-J. Feng, \textit{On the distribution of long-term time averages on symbolic
space}, J. Statist. Phys. 99 (2000), no. 3-4, 813–856. MR 1766907 (2002d:82003) 

\bibitem[FFW01]{FFW01}
A. Fan, D. Feng and J. Wu, \textit{Recurrence, dimension and entropy}. Journal of the London Mathematical Society, Volume 64, Issue 1, August 2001, pp. 229 - 244, \url{https://doi.org/10.1017/s0024610701002137}

\bibitem[FLM10]{FLM10}
A. Fan, L. Liao and J. Ma, \textit{On the frequency of partial quotients of regular continued fractions}. Mathematical Proceedings of the Cambridge Philosophical Society, 148(1), 179-192. \url{doi:10.1017/S0305004109990235}

\bibitem[FLW02]{FLW02}
 D.-J. Feng, K.-S. Lau and J. Wu, \textit{Ergodic limits on the conformal repellers}, Adv.
Math. 169 (2002), no. 1, 58–91. MR 1916371 (2003j:37036)

\bibitem[FJLR15]{FJLR15}
A. Fan , T. Jordan, L. Liao and M. Rams, \textit{Multifractal analysis
for expanding interval maps with infinitely many branches}. Transactions of
the American Mathematical Society, 367 (no. 3), 1847-1870  (2015)

\bibitem[GR09]{GR09}
K. Gelfert and M. Rams, \textit{The Lyapunov spectrum of some parabolic systems}. Ergodic Theory and Dynamical Systems, 29 (2009), no. 1, 919-940   \url{doi:10.1017/S0143385708080462}

\bibitem[Gur69]{Gur69}
B. Gurevic, \textit{Topological entropy of enumerable Markov chains}. Dokl. Akad. Nauk SSSR,
187 (1969), 715-718; English translation: Soviet Math. Dokl., 10(4) (1969), 911–15

\bibitem[Iom05]{Iom05}
G. Iommi, \textit{Multifractal analysis for countable Markov shifts}. Ergodic Theory Dynam.
Systems 25 (2005), no. 6, 1881–1907

\bibitem[IJT15]{IJT15}
G. Iommi, T. Jordan and M. Todd, \textit{Recurrence and transience for suspension flows}. Israel J.
Math. 209 (2015), no. 2, 547–592

\bibitem[IJT17]{IJT17}
G. Iommi,T. Jordan and M. Todd, \textit{Transience and multifractal analysis.} Annales de l'Institut Henri Poincaré (C) Non Linear Analysis, 34(2), 407-421, \url{https://doi.org/10.1016/j.anihpc.2015.12.007}

\bibitem[ITV18]{ITV18}
 G. Iommi, M. Todd and A. Velozo, \textit{Upper semi-continuity of entropy in non-compact settings}, {arXiv:1809.10022v2}

\bibitem[ITV19]{ITV19}
 G. Iommi, M. Todd and A. Velozo, \textit{Escape of entropy for countable Markov shifts}, arXiv:1908.10741v1
 
\bibitem[IV19]{IV19}
G. Iommi and A. Velozo, \textit{The space of invariant measures for countable Markov shifts}. Journal d'Analyse Mathématique. Volume 143, pages 461-501 (2021)
 
\bibitem[JMU06]{JMU06}
 O. Jenkinson, R. D. Mauldin, and M. Urba\'nski, \textit{Zero temperature limits of Gibbs equilibrium states for countable alphabet subshifts of finite type}, J. Stat. Phys. 119
(2005), no. 3-4, 765–776. MR 2151222 (2006g:37051) 

\bibitem[Kni34]{Kni34}
V. Knichal, \textit{Dyadische Entwicklungen und Hausdorffsches Mass}. Časopis pro pěstování matematiky a fysiky 065.4 (1936): 195-210. \url{http://eudml.org/doc/20173}

\bibitem[Oli98]{Oli98}
E. Olivier, \textit{Analyse multifractale de fonctions continues}, C. R. Acad. Sci. Paris S\'er. I
Math. 326 (1998), no. 10, 1171–1174. MR 1650242 (99h:58109)

\bibitem[Oli99]{Oli99}
E. Olivier, \textit{Dimension de Billingsley d’ensembles satur\'es}, C. R. Acad. Sci. Paris S\'er. I
Math. 328 (1999), no. 1, 13–16. MR 1674409 (2000b:28021)


\bibitem[Oli00]{Oli00}
E. Olivier, \textit{Structure multifractale d’une dynamique non expansive d\'efinie sur un ensemble de Cantor}, C. R. Acad. Sci. Paris S\'er. I Math. 331 (2000), no. 8, 605–610.
MR 1799097 (2002g:37033)

\bibitem[Ols02]{Ols02}
 L. Olsen, \textit{Divergence points of deformed empirical measures}, Math. Res. Lett. 9
(2002), no. 5-6, 701–713. MR 1906072 (2003k:37038)


\bibitem[Ols03a]{Ols03a}
L. Olsen, \textit{Multifractal analysis of divergence points of deformed measure theoretical Birkhoff averages}, J. Math. Pures Appl. (9) 82 (2003), no. 12, 1591–1649. MR 2025314
(2004k:37036)


\bibitem[Ols03b]{Ols03b}
L. Olsen, \textit{Small sets of divergence points are dimensionless}, Monatsh. Math. 140
(2003), no. 4, 335–350. MR 2026104 (2005a:37034)

\bibitem[OW03]{OW03}
L. Olsen and S. Winter, \textit{Normal and non-normal points of self-similar sets and divergence points of self-similar measures}, J. London Math. Soc. (2) 67 (2003), no. 1,
103–122. MR 1942414 (2003i:28009)

\bibitem[OW07]{OW07}
L. Olsen and S. Winter, \textit{Multifractal analysis of divergence points of deformed measure theoretical Birkhoff averages}. II. Non-linearity, divergence points and Banach space valued spectra, Bull. Sci. Math. 131 (2007), no. 6, 518–558. MR 2351308 (2010b:28023)

\bibitem[PS07]{PS07}
C. E. Pfister and W. G. Sullivan, \textit{On the topological entropy of saturated sets}, Ergodic
Theory Dynam. Systems 27 (2007), no. 3, 929–956. MR 2322186 (2008f:37036)

\bibitem[PU10]{PU10}
F. Przytycki and M. Urba\'nski, \textit{Conformal fractals: ergodic theory methods}, London Mathematical Society Lecture Note Series, vol. 371, Cambridge University Press,
Cambridge, 2010. MR 2656475 (2011g:37002)

\bibitem[PW01]{PW01}
Y. Pesin and H. Weiss, \textit{The multifractal analysis of Birkhoff averages and large deviations}, Global analysis of dynamical systems, Inst. Phys., Bristol, 2001, pp. 419–431.
MR 1858487 (2002m:37034)

\bibitem[Sar99]{Sar99}
O. Sarig, \textit{Thermodynamic formalism for countable Markov shifts}. Ergodic Theory Dynam.
Systems 19 (1999), 1565–1593

\bibitem[Sar15]{Sar15}
O. Sarig, \textit{Thermodynamic formalism for countable Markov shifts}. Hyperbolic dynamics,
fluctuations and large deviations, 81–117, Proc. Sympos. Pure Math. 89, Amer. Math.
Soc., 2015

\bibitem[SV97]{SV97}
B. Stratmann and R. Vogt, \textit{Fractal dimension of dissipative sets}, Nonlinearity 10 (1997) 565–577

\bibitem[Tem01]{Tem01}
 A. A. Tempelman, \textit{Multifractal analysis of ergodic averages: a generalization of Eggleston’s theorem}, J. Dynam. Control Systems 7 (2001), no. 4, 535–551. MR 1854035
(2002g:37008)

\bibitem[Vol58]{Vol58}
B. Volkmann, \textit{\"Uber Hausdorffsche Dimensionen von Mengen, die durch Ziffern eigenschaften charakterisiert sind. VI}, Math. Z. 68 (1958), 439–449. MR 0100578 (20
\#7008)

 \bibitem[Wal81]{Wal81}
 P. Walters, \textit{An introduction to ergodic theory}, Graduate Texts in Mathematics 79, Springer,
1981

\end{thebibliography}
\end{document}